\theoremstyle{plain}
\newtheorem{theorem}{Theorem}[section]
\newtheorem{lemma}[theorem]{Lemma}
\newtheorem{proposition}[theorem]{Proposition}
\theoremstyle{definition}
\newtheorem{definition}[theorem]{Definition}
\theoremstyle{remark}
\newtheorem{remark}[theorem]{Remark}
\numberwithin{equation}{section} 
\DeclareMathOperator{\divv}{div}
\DeclareMathOperator*{\esssup}{ess\:sup}
\DeclareMathOperator*{\essinf}{ess\:inf}
\DeclareMathOperator{\trace}{Tr}
\DeclareMathOperator*{\supp}{supp}
\title{Existence of weak solutions for models of general \\ compressible viscous fluids with linear pressure}
\author{Danica Basari\'{c}}
\date{}
\begin{document}
	\maketitle
	
	\begin{center}
		Technische Universit\"{a}t Berlin \\
		Institute f\"{u}r Mathematik, Stra{\ss}e des 17 Juni 136, 10623 Berlin, Germany 
	\end{center}
	
	\begin{center}
		E-mail address: basaric@math.tu-berlin.de
	\end{center}

	\begin{abstract}
		In this work we will focus on the existence of weak solutions for a system describing a general compressible viscous fluid in the case of the pressure being a linear function of the density and the viscous stress tensor being a non-linear function of the symmetric velocity gradient. More precisely, we will first prove the existence of dissipative solutions and study under which conditions it is possible to guarantee the existence of weak solutions.
	\end{abstract}

	\textbf{Mathematics Subject Classification:} 35A01, 35Q35, 76N10
	
	\vspace{0.2 cm}
	
	\textbf{Keywords:} compressible viscous fluid; weak solution; linear pressure; non-linear viscosity

	\section{Introduction}
	
	The motion of fluids can be modelled through a system of partial differential equations
	\begin{equation} \label{the system}
		 \begin{aligned}
		 \partial_t \varrho + \divv_x (\varrho \textbf{u})&=0,  \\
		 \partial_t(\varrho \textbf{u}) + \divv_x(\varrho \textbf{u} \otimes \textbf{u})+ \nabla_x p &= \divv_x \mathbb{S},
		 \end{aligned}
	\end{equation}
	which can be seen as a mathematical transcription of mainly two physical conservation laws: \textit{conservation of mass} and \textit{conservation of momentum}. For a general non-Newtonian fluid, we can suppose the viscous stress tensor $\mathbb{S}$ to be related to the symmetric velocity gradient 
	\begin{equation*}
		\mathbb{D}_x\textbf{u} = \frac{1}{2}(\nabla_x \textbf{u}+\nabla_x^T \textbf{u})
	\end{equation*}
	through an implicit rheological law of the type
	\begin{equation} \label{constitutive equation}
		\mathbb{S}: \mathbb{D}_x\textbf{u}=F(\mathbb{D}_x\textbf{u}) + F^*(\mathbb{S}),
	\end{equation}
	with $F$ a proper lower semi-continuous function and $F^*$ its conjugate. The physical background of writing the constitutive equation for $\mathbb{S}$ in this form is the fact that $\mathbb{S}$ is monotone in the velocity gradient and vice versa, as clearly explained in the recent survey on a new classification of incompressible fluids by Blechta, M\'{a}lek and Rajagopal \cite{BleMalRaj}. It is worth noticing that choosing
	\begin{equation*}
		F(\mathbb{D}_x\textbf{u}) = \frac{\mu}{2} |\mathbb{D}_x \textbf{u}|^2 +\frac{\lambda}{2} |\divv_x \textbf{u}|^2, \quad \mbox{with } \mu>0, \ \frac{2}{d}+\lambda \geq 0,
	\end{equation*}
	we obtain the compressible Navier-Stokes system. Even though in the latter case there are several results on the existence of global-in-time weak solutions (see e.g. \cite{ChaJinNov}, \cite{Fei}, \cite{Gir}, \cite{Lio}), much less is known for the case where the viscous stress tensor is not a linear function of the velocity gradient: the existence of large-time weak solutions was proved by Feireisl, Liao and M\'{a}lek \cite{FeiLiaMal} in the case where the bulk viscosity $\lambda=\lambda(|\divv_x \textbf{u}|)$ becomes singular for a finite value of $|\divv_x \textbf{u}|$; choosing a linear pressure
	\begin{equation} \label{linear pressure}
		p(\varrho)=a\varrho,
	\end{equation}
	the existence was proved by Mamontov \cite{Mam}, \cite{Mam1} in the context of exponentially growing viscosity coefficients, and by Matu\v{s}\accent23u-Ne\v{c}asov\'{a} and Novotn\'{y} \cite{MatNov}, exploiting the concept of measure-valued solutions. 
	 
	In this work, we are going to study under which hypothesis on the convex potential $F$ appearing in \eqref{constitutive equation} it is possible to guarantee the existence of global-in-time weak solutions for system \eqref{the system},\eqref{constitutive equation} and with a linear pressure of the type \eqref{linear pressure}, cf. Theorem \ref{existence weak solutions}.
	The proof will done via the concept of \textit{dissipative solutions}, satisfying system \eqref{the system} in the distributional sense with an extra defect term in the second equation that we may call \textit{Reynolds stress}. Recently, Abbatiello, Feireisl and Novotn\'{y} \cite{AbbFeiNov} proved the existence of dissipative solutions for system \eqref{the system} with $\mathbb{S}$ satisfying \eqref{constitutive equation} and the isentropic pressure
	\begin{equation*}
		p(\varrho)=a \varrho^{\gamma} \quad \mbox{with } \gamma>1.
	\end{equation*}
	Our goal is to focus on the case $\gamma=1$, for which we will prove the existence of dissipative solutions, cf. Theorem \ref{existence dissipative solutions}. The advantage of relaying on this very weak concept of solution is that they can be easily identified as limits of weakly convergent subsequences of approximate solutions, as we will see in Section \ref{Limit n}. It is worth noticing that our approach represents an alternative and improvement to the ``standard" measure--valued framework applied in this context by Matu\v{s}$\mathring{\mbox{u}}$-Ne\v{c}asov\'{a} and Novotn\'{y} \cite{MatNov}. 
	
	The paper is organized as follows.
	\begin{itemize}
		\item In Section \ref{The system general} we introduce the system we are going to study, fixing the necessary hypothesis on the pressure potential $F$ appearing in \eqref{constitutive equation}.
		\item In Section \ref{Dissipative solution} we provide the definition of dissipative solution for system \eqref{the system}--\eqref{constitutive equation} with the pressure being a linear function of the density, cf. Definition \ref{dissipative solution}.
		\item Section \ref{Existence of dissipative solutions} will be devoted to the proof of the existence of dissipative solutions, cf. Theorem \ref{existence dissipative solutions}. More precisely, we will perform a three-level approximation scheme: addition of artificial viscosity terms in the continuity equation and balance of momentum in order to convert the hyperbolic system into a parabolic one, regularization of the convex potential to make it continuously differentiable, approximation via the Faedo-Galerkin technique and a family of finite-dimensional spaces.
		\item In Section \ref{Existence of weak solutions} we prove the existence of weak solutions for particular choices of the convex potential $F$, cf. Theorem \ref{existence weak solutions}.
		\item In the Appendix \ref{De la Vallee-Poussin criterion} we provide a slightly modified version of the De la Vall\'{e}e--Poussin criterion as we require the stronger condition, with respect to the standard formulation, that the Young function satisfies the $\Delta_2$-condition, cf. Theorem \ref{De la Valee Poussin criterion}, necessary to get the existence of weak solutions.
	\end{itemize}

	\section{The system} \label{The system general}
	
	We are going to study the system described by the following couple of equations
	\begin{align}
		\partial_t \varrho + \divv_x (\varrho \textbf{u})&=0, \label{continuity equation} \\
		\partial_t(\varrho \textbf{u}) + \divv_x(\varrho \textbf{u} \otimes \textbf{u})+ \nabla_x p(\varrho) &= \divv_x \mathbb{S}. \label{balance of momentum}
	\end{align}
	The unknown variables are the density $\varrho=\varrho(t,x)$ and the velocity $\textbf{u}=\textbf{u}(t,x)$ of the fluid, while  the viscous stress tensor $\mathbb{S}$ is assumed to be connected to the symmetric velocity gradient $\mathbb{D}_x\textbf{u}$
	through an implicit rheological law of the type
	\begin{equation} \label{relation viscous stress symmetric gradient}
		\mathbb{S}: \mathbb{D}_x\textbf{u}=F(\mathbb{D}_x\textbf{u}) + F^*(\mathbb{S}).
	\end{equation}
	where, denoting with $\mathbb{R}^{d\times d}_{\textup{sym}}$ the space of $d$-dimensional real symmetric tensors,
	\begin{equation} \label{conditions on F}
		F: \mathbb{R}^{d\times d}_{\textup{sym}} \rightarrow 	[0,\infty) \mbox{ is convex and lower semi-continuous with } F(0)=0,
	\end{equation}
	and $F^*$ is its conjugate. As clearly motivated in \cite{AbbFeiNov}, Section 2.1.2, we will suppose $F$ to satisfy relation
	\begin{equation} \label{relation F and trsce-less part}
		F(\mathbb{D}) \geq \mu \left| \mathbb{D}-\frac{1}{d} \trace[\mathbb{D}]\mathbb{I} \right|^q-c \quad \mbox{for all } \mathbb{D} \in \mathbb{R}^{d\times d}_{\textup{sym}},
	\end{equation}
	for some $\mu >0$, $c>0$ and $q>1$. Notice that condition \eqref{relation viscous stress symmetric gradient} is equivalent in requiring
	\begin{equation*}
		\mathbb{S} \in \partial F(\mathbb{D}\textbf{u}),
	\end{equation*}
	where $\partial$ denotes the subdifferential of a convex function. Furthermore, we will consider a linear barotropic pressure
	\begin{equation} \label{pressure}
		p(\varrho)= a \varrho, \quad a>0;
	\end{equation}
	the pressure potential $P$, satisfying the ODE
	\begin{equation*}
		\varrho P'(\varrho)-P(\varrho)=p(\varrho),
	\end{equation*} 
	will be then of the form
	\begin{equation} \label{pressure potential}
		P(\varrho) = a \ \varrho \log \varrho,
	\end{equation}
	which implies that $P$ is a strictly convex superlinear 	continuous function on $[0,\infty)$. We will study the system on the set 
	\begin{equation*}
		(t,x) \in (0,T)\times \Omega,
	\end{equation*}
	where the time $T>0$ can be chosen arbitrarily large and the physical domain $\Omega \subset \mathbb{R}^d$ is assumed to be bounded and Lipschitz, on the boundary of which we impose the no--slip condition
	\begin{equation} \label{boundary condition}
		\textbf{u}|_{\partial \Omega}=0.
	\end{equation} 
	Finally, we fix the initial conditions
	\begin{equation} \label{initial conditions}
		\varrho(0,\cdot)=\varrho_0, \quad (\varrho \textbf{u})(0,\cdot)= \textbf{m}_0.
	\end{equation}
	
	We conclude this section with the following result, collecting the significant properties of the conjugate function $F^*$.
	
	\begin{proposition}
		Let the function $F$ satisfy conditions \eqref{conditions on F}. Then, its conjugate
		\begin{equation} \label{conditions on F*}
		F^*: \mathbb{R}^{d\times d}_{\textup{sym}} \rightarrow [0,\infty] \mbox{ is convex, lower semi-continuous and superlinear}.
		\end{equation}
	\end{proposition}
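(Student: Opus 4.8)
The plan is to verify the three asserted properties of $F^*$ in turn, relying on standard facts from convex analysis together with the structural hypothesis \eqref{relation F and trsce-less part}. Recall that the conjugate is defined by
\begin{equation*}
	F^*(\mathbb{S}) = \sup_{\mathbb{D} \in \mathbb{R}^{d\times d}_{\textup{sym}}} \left( \mathbb{S}:\mathbb{D} - F(\mathbb{D}) \right).
\end{equation*}
Convexity and lower semi-continuity are immediate and require no hypothesis beyond $F$ being proper: $F^*$ is a pointwise supremum of the affine (hence convex and continuous) maps $\mathbb{S} \mapsto \mathbb{S}:\mathbb{D} - F(\mathbb{D})$, and a supremum of such maps is always convex and lower semi-continuous. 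Nonnegativity of $F^*$ follows by testing the supremum with $\mathbb{D}=0$: since $F(0)=0$ by \eqref{conditions on F}, we get $F^*(\mathbb{S}) \geq \mathbb{S}:0 - F(0) = 0$ for every $\mathbb{S}$. That $F^*$ takes values in $[0,\infty]$ (rather than being identically $+\infty$ somewhere forced) is simply the observation that a supremum of real numbers lies in $(-\infty,\infty]$, combined with the lower bound just established.

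The only genuinely substantive point is superlinearity of $F^*$, i.e.
\begin{equation*}
	\lim_{|\mathbb{S}| \to \infty} \frac{F^*(\mathbb{S})}{|\mathbb{S}|} = +\infty,
\end{equation*}
and this is where hypothesis \eqref{relation F and trsce-less part} enters. The standard duality principle is that $F^*$ is superlinear if and only if $F$ is finite (in fact, locally bounded) everywhere on $\mathbb{R}^{d\times d}_{\textup{sym}}$; more precisely, superlinear growth of the conjugate is dual to everywhere-finiteness of $F$. Since $F$ maps into $[0,\infty)$ by \eqref{conditions on F}, it is finite everywhere, and being convex and finite on a finite-dimensional space it is automatically continuous, hence bounded on bounded sets. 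The subtlety is that $F^*$ may fail to be finite (e.g. if $F$ has linear growth in the trace part, $F^*$ can take the value $+\infty$), but superlinearity as stated is a statement about the behavior of $F^*/|\mathbb{S}|$ as $|\mathbb{S}| \to \infty$, allowing the value $+\infty$, and this follows from the estimate: for fixed $R>0$, choosing in the supremum defining $F^*(\mathbb{S})$ the competitor $\mathbb{D} = R \, \mathbb{S}/|\mathbb{S}|$ gives $F^*(\mathbb{S}) \geq R|\mathbb{S}| - \sup_{|\mathbb{D}| \le R} F(\mathbb{D})$, whence $\liminf_{|\mathbb{S}|\to\infty} F^*(\mathbb{S})/|\mathbb{S}| \geq R$; letting $R \to \infty$ yields the claim. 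Note this argument only used $F(0)=0$ and finiteness of $F$ on bounded sets, so \eqref{relation F and trsce-less part} is not strictly needed for this weak form of superlinearity — it plays its role later, in providing the coercivity estimate $\int F^*(\mathbb{S}) \, \mathrm{d}x$ controls $\mathbb{S}$ in a better space, but that is a remark rather than part of this proof.

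I expect the main obstacle to be purely expository: deciding exactly which notion of ``superlinear'' is meant (the $L^1$-type de la Vallée–Poussin superlinearity, which the Appendix addresses, versus the pointwise $F^*(\mathbb{S})/|\mathbb{S}| \to \infty$ statement) and making sure the cited convex-analysis facts — that the conjugate of a proper function is convex and lower semi-continuous, and the finiteness/superlinearity duality — are invoked with the correct hypotheses. Given the context (the quantities $F(\mathbb{D}_x\textbf{u})$ and $F^*(\mathbb{S})$ are to be integrated and used for compactness), I would state and prove the pointwise version as above, remark that it upgrades to an $L^1$-de la Vallée–Poussin statement via Theorem \ref{De la Valee Poussin criterion} when one additionally knows a bound on $\int F^*(\mathbb{S})$, and keep the proof to the three short paragraphs sketched: (i) convexity and lower semi-continuity from the supremum-of-affine-functions structure; (ii) $F^* \geq 0$ by testing with $\mathbb{D}=0$ and using $F(0)=0$; (iii) superlinearity by testing with $\mathbb{D} = R\mathbb{S}/|\mathbb{S}|$ and sending $R \to \infty$, using continuity of the finite convex function $F$ on compact sets.
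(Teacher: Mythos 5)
Your proof is correct and follows essentially the same route as the paper: convexity and lower semi-continuity from the supremum-of-affine-functions structure, non-negativity by testing with $\mathbb{D}=0$ using $F(0)=0$, and superlinearity by testing with $\mathbb{D}=R\,\mathbb{S}/|\mathbb{S}|$ and sending $R\to\infty$ after using finiteness (hence, by convexity, local boundedness) of $F$. Your remark that \eqref{relation F and trsce-less part} is not actually needed here is accurate — the paper's proposition only invokes \eqref{conditions on F}, and its proof likewise uses only finiteness of $F$ on bounded sets.
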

	\begin{proof}
		First of all, we recall that $F^*$ is defined for every $\mathbb{A} \in \mathbb{R}^{d\times d}_{\textup{sym}}$ as
		\begin{equation*}
		F^*(\mathbb{A}):= \sup_{\mathbb{B}\in 	\mathbb{R}^{d\times d}_{\textup{sym}}} \{ \mathbb{A}: \mathbb{B}-F(\mathbb{B}) \}.
		\end{equation*}
		The non-negativity of $F^*$ is trivial if $F(0)=0$ since
		\begin{equation*}
		F^*(\mathbb{A}) \geq \mathbb{A}:0- F(0) =0 \quad \mbox{for every } \mathbb{A} \in \mathbb{R}^{d\times d}_{\textup{sym}}.
		\end{equation*}
		It is also well-know that the conjugate is convex and lower semi-continuous as it is the supremum of a family of affine functions. It remains to prove the superlinearity:
		\begin{equation} \label{superlinearity of F star}
		\lim_{|\mathbb{A}|\rightarrow \infty} \frac{F^*(\mathbb{A})}{|\mathbb{A}|}=+\infty.
		\end{equation}
		Let $B_R(0)$ be the ball centred at origin and radius $R>0$; using the fact that for any $\mathbb{A} \in \mathbb{R}^{d\times d}_{\textup{sym}}$
		\begin{equation*}
		\sup_{\mathbb{B} \in B_R(0)} \mathbb{A}: \mathbb{B} = \sup_{\mathbb{B} \in B_R(0)} \{ \mathbb{A}: \mathbb{B}-F(\mathbb{B}) + F(\mathbb{B}) \} \leq F^*(\mathbb{A}) + \sup_{\mathbb{B} \in B_R(0)} F(\mathbb{B}) 
		\end{equation*}
		we have
		\begin{equation*}
		\frac{F^*(\mathbb{A})}{|\mathbb{A}|} \geq \sup_{\substack{0<r\leq R \\ |\mathbb{V}|\leq 1}} \left\{ r \frac{\mathbb{A}}{|\mathbb{A}|} :\mathbb{V} \right\}- \frac{1}{|\mathbb{A}|} \sup_{\mathbb{B} \in B_R(0)} F(\mathbb{B}) \geq R- \frac{c}{|\mathbb{A}|}, 
		\end{equation*}
		where we used the fact that $F(\mathbb{B})$ is finite for any $\mathbb{B} \in \mathbb{R}^{d\times d}_{\textup{sym}}$. We conclude that
		\begin{equation*}
		\liminf_{|\mathbb{A}|\rightarrow \infty} \frac{F^*(\mathbb{A})}{|\mathbb{A}|} \geq R,
		\end{equation*}
		and, since $R$ can be chosen arbitrarily large, we obtain \eqref{superlinearity of F star}.
	\end{proof}

	\section{Dissipative solution} \label{Dissipative solution}
	Following \cite{AbbFeiNov}, we introduce to the concept of \textit{dissipative solutions}, which satisfy the system in the distributional sense but with an extra ``turbulent'' term $\mathfrak{R}$ in the balance of momentum \eqref{balance of momentum} that we may call \textit{Reynolds stress}. As pointed out in \cite{Bas}, Section 4.1.1, in this context, i.e. when the pressure is a linear function of the density, it is only the possible concentrations and/or oscillations in the convective term that contributes to $\mathfrak{R}$. It is worth noticing that when $\mathfrak{R} \equiv 0$, we get the standard notion of weak solution. From now  on, it is better to consider the density $\varrho$ and the momentum $\textbf{m}=\varrho \textbf{u}$ as state variables, since they are at least weakly continuous in time.
	
	\begin{definition} \label{dissipative solution}
		The pair of functions $[\varrho,\textbf{m}]$ constitutes a \textit{dissipative solution} to the problem \eqref{continuity equation}--\eqref{initial conditions} with initial data
		\begin{equation*}
			[\varrho_0, \textbf{m}_0] \in L^1(\Omega)\times L^1(\Omega; \mathbb{R}^d) 
		\end{equation*}
		if the following holds:
		\begin{itemize}
			\item[(i)]  $\varrho \geq 0$ in $(0,T) \times \Omega$ and
			\begin{equation*}
				[\varrho, \textbf{m}] \in C_{ \textup{weak}}([0,T]; L^1(\Omega)) \times  C_{ \textup{weak}}([0,T]; L^1(\Omega;\mathbb{R}^d));
			\end{equation*}
			\item[(ii)] the integral identity
			\begin{equation} \label{weak formulation continuity equation}
				\left[ \int_{\Omega} \varrho \varphi(t,\cdot) \ \textup{d}x \right]_{t=0}^{t=\tau}= \int_{0}^{\tau} \int_{\Omega} [\varrho \partial_t \varphi + \textbf{m}\cdot \nabla_x \varphi] \ \textup{d}x \textup{d}t
			\end{equation}
			holds for any $\tau \in [0,T]$ and any $\varphi \in C_c^1([0,T]\times \overline{\Omega})$, with $\varrho(0,\cdot)=\varrho_0$;
			\item[(iii)] there exist
			\begin{equation*}
				\mathbb{S} \in L^1 (0,T; L^1(\Omega; \mathbb{R}^{d\times d}_{\textup{sym}})) \quad \mbox{and} \quad \mathfrak{R} \in L^{\infty}_{ \textup{weak}}(0,T; \mathcal{M}^+(\overline{\Omega}; \mathbb{R}^{d\times d }_{ \textup{sym}}))
			\end{equation*}
			such that the integral identity
			\begin{equation} \label{weak formulation balance of momentum}
			\begin{aligned}
				\left[ \int_{\Omega} \textbf{m}\cdot \bm{\varphi}(t, \cdot) \ \textup{d}x \right]_{t=0}^{t=\tau} &= \int_{0}^{\tau}\int_{\Omega} \left[ \textbf{m} \cdot \partial_t \bm{\varphi} + \mathbbm{1}_{\varrho>0} \frac{\textbf{m}\otimes \textbf{m}}{\varrho}:\nabla_x \bm{\varphi} +a\varrho \divv_x\bm{\varphi}\right] \ \textup{d}x \textup{d}t \\
				&- \int_{0}^{\tau} \int_{\Omega} \mathbb{S}: \nabla_x \bm{\varphi} \ \textup{d}x \textup{d}t + \int_{0}^{\tau} \int_{\overline{\Omega}} \nabla_x \bm{\varphi} : \textup{d}\mathfrak{R} \ \textup{d}t
			\end{aligned}
			\end{equation}
			holds for any $\tau \in [0,T]$ and any $\bm{\varphi} \in C^1_c([0,T]\times \overline{\Omega}; \mathbb{R}^d)$, $\bm{\varphi}|_{\partial \Omega}=0$, with $\textbf{m}(0,\cdot)=\textbf{m}_0$;
			\item[(iv)] there exists 
			\begin{equation*}
				\textbf{u} \in L^q(0,T; W_0^{1,q}(\Omega; \mathbb{R}^d)) \mbox{ such that } \textbf{m}=\varrho \textbf{u} \mbox{ a.e. in } (0,T)\times \Omega; 
			\end{equation*}
			\item[(v)] there exists a constant $\lambda>0$ such that the energy inequality
			\begin{equation} \label{energy inequality}
				\begin{aligned}
					\int_{\Omega} \left[ \frac{1}{2} \frac{|\textbf{m}|^2}{\varrho} + a\varrho \log \varrho \right](\tau,\cdot) \ \textup{d}x +  \frac{1}{\lambda}\int_{\overline{\Omega}} \textup{d}  \trace[\mathfrak{R}(\tau)] &+ \int_{0}^{\tau} \int_{\Omega} \left[ F(\mathbb{D}\textbf{u})+ F^*(\mathbb{S})\right] \ \textup{d}x \textup{d}t \\
					&\leq \int_{\Omega} \left[ \frac{1}{2} \frac{|\textbf{m}_0|^2}{\varrho_0} + a\varrho_0 \log \varrho_0 \right] \ \textup{d}x 
				\end{aligned}
			\end{equation}
			holds for a.e. $\tau  \in (0,T)$.
		\end{itemize}
	\end{definition}

	\begin{remark} \label{remark on space of measures}
		Here and in the sequel, $\mathcal{M}^+(\overline{\Omega})$ represents the space of all the positive Borel measures on $\overline{\Omega}$, while $\mathcal{M}^+(\overline{\Omega}; \mathbb{R}^{d\times d}_{\rm sym})$ denotes the space of tensor--valued (signed) Borel measures $\mathfrak{R}$ such that 
		\begin{equation*}
		\mathfrak{R}: (\xi \otimes \xi) \in \mathcal{M}^+(\overline{\Omega}),
		\end{equation*}
		for all $\xi \in \mathbb{R}^d$, and with components $\mathfrak{R}_{i,j}=\mathfrak{R}_{j,i}$. $L^{\infty}_{\rm weak}(0,T; \mathcal{M}(\overline{\Omega}))$ denotes the space of all the weak--$*$ measurable mapping $\nu: [0,T] \rightarrow \mathcal{M}(\overline{\Omega})$ such that
		\begin{equation*}
			\esssup_{t\in (0,T)} \|\nu(t,\cdot) \|_{\mathcal{M}(\overline{\Omega})} <\infty,
		\end{equation*}
		which can also be identified as the dual space of $L^1(0,T; C(\overline{\Omega}))$.
	\end{remark}
	
	\section{Existence of dissipative solutions} \label{Existence of dissipative solutions}
	
	As in \cite{AbbFeiNov} Abbatiello, Feireisl and Novotn\'{y} proved the existence of dissipative solutions of system \eqref{continuity equation}--\eqref{initial conditions} with $p(\varrho)=a \varrho^{\gamma}$ and $\gamma>1$, in this section we aim to show existence for $\gamma=1$. We employ an approximation scheme based on
	\begin{itemize}
		\item[(i)] addition of an artificial viscosity term of the type $\varepsilon \Delta_x \varrho$ in the continuity equation \eqref{continuity equation} in order to convert the hyperbolic equation into a parabolic one and thus recover better regularity properties of $\varrho$;
		\item[(ii)] addition of an extra term of the type $\varepsilon \nabla_x \textbf{u} \cdot \nabla_x \varrho$ in the balance of momentum \eqref{balance of momentum} in order to eliminate the extra terms arising in the energy inequality to save the a priori estimates;
		\item[(iii)]  regularization of the convex potential $F$ through convolution with a family of regularizing kernels to make it continuously differentiable.
	\end{itemize}
	
	More precisely, we will study the following system:
	\begin{itemize}
		\item \textbf{continuity equation}
		\begin{equation} \label{approximation continuity equation}
		\partial_t \varrho +\divv_x (\varrho \textbf{u}) = \varepsilon \Delta_x \varrho,
		\end{equation}
		on $(0,T)\times \Omega$, with $\varepsilon>0$, the homogeneous Neumann boundary condition
		\begin{equation} \label{Neumann boundary condition}
		\nabla_x \varrho \cdot \textbf{n} =0 \quad \mbox{on } \partial \Omega,
		\end{equation}
		and the initial condition 
		\begin{equation} \label{approximation initial density}
		\varrho(0,\cdot)= \varrho_{0,n} \quad \mbox{on }\Omega, \quad \varrho_{0,n} \rightarrow \varrho_0 \ \mbox{in } L^1(\Omega) \ \mbox{as } n\rightarrow \infty,
		\end{equation}
		with $\varrho_{0,n} \in C(\overline{\Omega})$, $\varrho_{0,n}>0$ for all $n\in \mathbb{N}$.
		
		\item \textbf{momentum equation} 
		\begin{equation} \label{approximation momentum equation}
		\partial_t(\varrho \textbf{u}) + \divv_x(\varrho \textbf{u}\otimes \textbf{u}) +a\nabla_x \varrho +\varepsilon \nabla_x \textbf{u} \cdot \nabla_x \varrho = \divv_x \mathbb{S}
		\end{equation}
		on $(0,T)\times \Omega$, with $\varepsilon>0$, the no-slip boundary condition
		\begin{equation} \label{boundary condition velocity}
		\textbf{u}|_{\partial \Omega}=0 \quad \mbox{on } \partial \Omega,
		\end{equation}
		and the initial condition 
		\begin{equation} \label{approximation initial momentum}
		(\varrho \textbf{u})(0,\cdot)= \textbf{m}_0 \quad \mbox{on }\Omega.
		\end{equation}
		\item \textbf{convex potential}
		\begin{equation} \label{approximation potential}
		F_{\delta}(\mathbb{D}) = (\xi_{\delta}* F)(\mathbb{D})- \inf_{\mathbb{D}\in \mathbb{R}^{d\times d}_{\textup{sym}}} (\xi_{\delta}* F)
		\end{equation}
		for any $\mathbb{D} \in \mathbb{R}^{d\times d}_{\textup{sym}}$, with $\{ \xi_{\delta} \}_{\delta>0}$ a family of regularizing kernels in $\mathbb{R}^{d\times d}_{\textup{sym}}$, the function $F$ satisfying \eqref{conditions on F}--\eqref{relation F and trsce-less part}, and such that
		\begin{equation} \label{relation viscous stress with F delta}
			\mathbb{S}: F_{\delta}(\mathbb{D}_x \textbf{u}) = F_{\delta}(\mathbb{D}_x \textbf{u})+ F^*_{\delta} (\mathbb{S}).
		\end{equation}
	\end{itemize}
	
	Even if system \eqref{approximation continuity equation}--\eqref{relation viscous stress with F delta} is of parabolic type, we are forced to perform a further approximation known as \textit{Faedo-Galerkin technique}. The reason is that the unknown state variable $\textbf{u}$ appears multiplied by $\varrho$ in \eqref{approximation momentum equation}, which prevents us from applying the already existing results for parabolic systems that can be found in literature. The idea is to consider a family $\{X_n\}_{n\in \mathbb{N}}$ of finite-dimensional spaces $X_n \subset L^2(\Omega; \mathbb{R}^d)$, such that
	\begin{equation*}
	X_n:= { \textup{span}} \{ \textbf{w}_i | \ \textbf{w}_i \in C_c^{\infty}(\Omega; \mathbb{R}^d), \ i=1, \dots, n \},
	\end{equation*}
	where $\textbf{w}_i$ are orthonormal with respect to the standard scalar product in $L^2(\Omega; \mathbb{R}^d)$, and to look for approximated velocities
	\begin{equation*}
	\textbf{u}_n \in C([0,T]; X_n).
	\end{equation*}
	Solvability of the approximated problem will be discussed in the following sections.
	
	\subsection{On the approximated continuity equation}
	
	Given $\textbf{u} \in C([0,T]; X_n)$, let us focus on identifying that unique solution
	\begin{equation*}
		\varrho = \varrho[\textbf{u}]
	\end{equation*}
	of system \eqref{approximation continuity equation}--\eqref{approximation initial density}. As our domain $\Omega$ is merely Lipschitz, we cannot simply repeat the same passages performed for instance by Feireisl \cite{Fei} in the context of the compressible Navier-Stokes system since better regularity for the domain would be required. However, since $X_n$ is finite-dimensional, all the norms on $X_n$ induced by $W^{k,p}$-norms, with $k\in \mathbb{N}$ and $1\leq p\leq \infty$, are equivalent and thus, we deduce that
	\begin{equation*}
	\textbf{u} \in L^{\infty}(0,T; W^{1,\infty}(\Omega; \mathbb{R}^d)),
	\end{equation*}
	and there exist two constants $0<\underline{n}<\overline{n}<\infty$, depending solely on the dimension $n$ of $X_n$, such that for any $t\in [0,T]$
	\begin{equation} \label{connection norm in Xn and W}
	\underline{n} \| \textbf{u}(t, \cdot) \|_{W^{1,\infty}(\Omega)} \leq \| \textbf{u}(t, \cdot) \|_{X_n} \leq \overline{n} \| \textbf{u}(t, \cdot) \|_{W^{1,\infty}(\Omega)}.
	\end{equation}
	
	It is now enough to apply the following result to get the existence of weak solutions and the necessary bounds to recover the existence of the corresponded velocity $\textbf{u}$.
	\begin{lemma} \label{existence approximated densities}
		Let $\Omega \subset \mathbb{R}^d$ be a bounded Lipschitz domain. For any given $\textup{\textbf{u}} \in C([0,T]; X_n)$ and $\varepsilon>0$, there exists a unique weak solution
		\begin{equation*}
		\varrho= \varrho_{\varepsilon, n} \in L^2((0,T); W^{1,2}(\Omega)) \cap C([0,T];L^2(\Omega))
		\end{equation*}
		of system \eqref{approximation continuity equation}--\eqref{approximation initial density} in the sense that the integral identity
		\begin{equation*}
			\left[\int_{\Omega} \varrho \varphi (t, \cdot) \ \textup{d}x\right]_{t=0}^{t=\tau} = \int_{0}^{\tau} \int_{\Omega} (\varrho \partial_t \varphi +\varrho \textup{\textbf{u}} \cdot \nabla_x \varphi -\varepsilon \nabla_x \varrho \cdot \nabla_x \varphi ) \ \textup{d} x,
		\end{equation*}
		holds for any $\tau \in [0,T]$ and any $\varphi \in C^1([0,T]\times \overline{\Omega})$, with $\varrho(0,\cdot)=\varrho_{0,n}$. Moreover,
		\begin{itemize}
			\item[(i)] (bound from above - maximum principle) the weak solution $\varrho$ satisfies
			\begin{equation} \label{bound above density}
			\| \varrho \|_{L^{\infty}((0,\tau) \times \Omega)} \leq \overline{\varrho} \exp \left( \tau \|\divv_x \textup{\textbf{u}}\|_{L^{\infty}((0,T) \times \Omega)} \right),
			\end{equation}
			for any $\tau \in [0,T]$, with
			\begin{equation} \label{maximum approximated initial density}
			\overline{\varrho}:= \max_{\Omega}  \varrho_{0,n};
			\end{equation}
			\item[(ii)] (bound from below) the weak solution $\varrho$ satisfies
			\begin{equation} \label{bound below density}
				\essinf_{(0,\tau) \times \Omega} \varrho(t,x)\geq \underline{\varrho} \exp \left( -\tau \|\divv_x \textup{\textbf{u}}\|_{L^{\infty}((0,T) \times \Omega)}\right),
			\end{equation}
			for any $\tau \in [0,T]$, with
			\begin{equation} \label{minimum approximated initial density}
				\underline{\varrho}:= \min_{\Omega}  \varrho_{0,n};
			\end{equation}
			\item[(iii)] let $\textup{\textbf{u}}_1, \textup{\textbf{u}}_2 \in C([0,T]; X_n)$ be such that
			\begin{equation*}
			\max_{i=1,2} \| \textup{\textbf{u}}_i\|_{L^{\infty}(0,T; W^{1,\infty}(\Omega;\mathbb{R}^d))} \leq K,
			\end{equation*}
			and let $\varrho_i= \varrho[\textup{\textbf{u}}_i]$, $i=1,2$ be the weak solutions of the approximated problem \eqref{approximation continuity equation}--\eqref{approximation initial density} sharing the same initial data $\varrho_{0,n}$ in \eqref{approximation initial density}. Then, for any $\tau \in [0,T]$ we have
			\begin{equation} \label{difference two solution approximates densities}
				\| (\varrho_1- \varrho_2)(\tau, \cdot)\|_{L^2(\Omega)} \leq c_1 \| \textup{\textbf{u}}_1 -\textup{\textbf{u}}_2 \|_{L^{\infty}(0,\tau; W^{1,\infty}(\Omega; \mathbb{R}^d))}
			\end{equation}
			with $c_1=c_1(\varepsilon, \varrho_0, T, K)$.
		\end{itemize}
	\end{lemma}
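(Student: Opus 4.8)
The plan is to treat the approximated continuity equation \eqref{approximation continuity equation} as a linear parabolic equation for $\varrho$ with the given (smooth in space, continuous in time) velocity field $\textbf{u}$ playing the role of a coefficient, and to build the solution by a Galerkin-type argument in the spatial variable together with energy estimates, rather than by the regularity-heavy fixed-point scheme of \cite{Fei} which would require a smoother domain. First I would set up a finite-dimensional approximation of the density equation using a Hilbert basis of $W^{1,2}(\Omega)$ adapted to the homogeneous Neumann condition (for instance the eigenfunctions of the Neumann Laplacian, or simply a Galerkin basis of $W^{1,2}(\Omega)$): writing $\varrho_m = \sum_{k=1}^m c_k(t)\psi_k$ and projecting \eqref{approximation continuity equation} tested against $\psi_k$ yields a linear ODE system for the $c_k$ with coefficients that are continuous in $t$ (here one uses $\textbf{u}\in L^\infty(0,T;W^{1,\infty})$ and \eqref{connection norm in Xn and W}), hence globally solvable on $[0,T]$.

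The core of the argument is then the a priori estimates, uniform in $m$. Testing the equation with $\varrho_m$ gives the basic energy identity
\begin{equation*}
\frac12 \frac{\mathrm{d}}{\mathrm{d}t}\|\varrho_m\|_{L^2(\Omega)}^2 + \varepsilon \|\nabla_x \varrho_m\|_{L^2(\Omega)}^2 = \frac12 \int_\Omega \varrho_m^2\,\divv_x \textbf{u}\,\mathrm{d}x,
\end{equation*}
and Gronwall's lemma yields a bound on $\varrho_m$ in $L^\infty(0,T;L^2(\Omega))\cap L^2(0,T;W^{1,2}(\Omega))$ depending only on $\|\varrho_{0,n}\|_{L^2}$, $\varepsilon$, $T$ and $\|\divv_x \textbf{u}\|_{L^\infty}$. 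A companion estimate on $\partial_t \varrho_m$ in $L^2(0,T;(W^{1,2})^*)$ follows directly from the equation. Passing to the limit $m\to\infty$ by weak and weak-$*$ compactness, together with the Aubin--Lions lemma to obtain strong $L^2((0,T)\times\Omega)$ convergence and thus identify the (linear) terms, produces a weak solution in the stated class; weak continuity in time plus the energy bound upgrades it to $C([0,T];L^2(\Omega))$, and attainment of the initial datum $\varrho_{0,n}$ is standard. Uniqueness is immediate from linearity: the difference of two solutions satisfies the same equation with zero data, and the energy identity forces it to vanish.

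For the quantitative properties: the maximum principle bounds \eqref{bound above density} and \eqref{bound below density} are obtained by testing the equation with $(\varrho - \overline{\varrho}\,e^{t\|\divv_x\textbf{u}\|_\infty})^+$ (respectively with the negative part of $\varrho$ minus the lower comparison function), using that these truncations vanish at $t=0$ by the choice of $\overline{\varrho},\underline{\varrho}$ in \eqref{maximum approximated initial density}, \eqref{minimum approximated initial density}, that the diffusion term has a favourable sign, and that the reaction term is controlled by $\|\divv_x\textbf{u}\|_\infty$ times the square of the truncation; Gronwall then gives that the truncation is identically zero. This is cleanest performed at the Galerkin level or, if the basis is not order-preserving, via a parabolic comparison principle applied to the limit equation directly — the no-slip assumption $\textbf{u}|_{\partial\Omega}=0$ ensures the transport term produces no boundary contribution. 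Finally, the Lipschitz-dependence estimate \eqref{difference two solution approximates densities}: setting $\sigma = \varrho_1 - \varrho_2$, one subtracts the two equations to get $\partial_t\sigma + \divv_x(\sigma\textbf{u}_1) + \divv_x(\varrho_2(\textbf{u}_1-\textbf{u}_2)) = \varepsilon\Delta_x\sigma$, tests with $\sigma$, and absorbs the term involving $\textbf{u}_1-\textbf{u}_2$ using the uniform $L^\infty$ bound on $\varrho_2$ coming from \eqref{bound above density} (which is where the constant $c_1$ acquires its dependence on $\varepsilon,\varrho_0,T,K$); Gronwall closes the estimate, with $\sigma(0,\cdot)=0$ because the two solutions share initial data.

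I expect the main obstacle to be organizing the maximum/minimum principle argument so that it is rigorous despite the low regularity of $\Omega$: the truncation functions $(\varrho - \text{comparison})^\pm$ are admissible test functions only because $\varrho\in L^2(0,T;W^{1,2}(\Omega))$ and the comparison functions are smooth in $t$ and constant in $x$, so $(\varrho-\text{comp})^\pm\in L^2(0,T;W^{1,2}(\Omega))$ as well; one must also justify the chain-rule identity $\int_\Omega \partial_t\varrho\,(\varrho-\text{comp})^+ = \frac12\frac{\mathrm{d}}{\mathrm{d}t}\|(\varrho-\text{comp})^+\|_{L^2}^2 + (\text{lower-order})$, which needs $\partial_t\varrho\in L^2(0,T;(W^{1,2})^*)$ and a standard but slightly technical lemma on time-differentiation of the norm of the positive part. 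Everything else is routine linear parabolic theory.
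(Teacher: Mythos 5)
Your proposal is mathematically sound, but it takes a genuinely different route from the paper: the paper's ``proof'' of this lemma is a pure citation --- existence and the upper bound (i) are delegated to Crippa--Donadello--Spinolo \cite{CriDonSpi}, Lemmas 3.2 and 3.4, the lower bound (ii) to Abbatiello--Feireisl--Novotn\'y \cite{AbbFeiNov}, Corollary 3.4, and the Lipschitz dependence (iii) to Chang--Jin--Novotn\'y \cite{ChaJinNov}, Lemma 4.3. You instead inline a self-contained argument: Galerkin in a $W^{1,2}$-basis adapted to the Neumann condition, the $L^2$ energy identity plus Gronwall for existence/uniqueness (Aubin--Lions for compactness), testing with the truncations $\bigl(\varrho - \overline{\varrho}\,e^{t\|\divv_x \textbf{u}\|_\infty}\bigr)^{+}$ and $\bigl(\varrho - \underline{\varrho}\,e^{-t\|\divv_x \textbf{u}\|_\infty}\bigr)^{-}$ for the two-sided bounds, and a subtracted-equation energy estimate for (iii). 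What you gain is transparency and the fact that the whole chain visibly only uses that $\Omega$ is bounded Lipschitz, $\textbf{u}\in L^\infty(0,T;W^{1,\infty}_0)$ and the Neumann condition; what the paper gains is brevity and not having to restate standard linear parabolic theory. You also correctly flag the one real technical point --- justifying $\int_\Omega \partial_t\varrho\,(\varrho-\mathrm{comp})^{\pm}\,\mathrm{d}x = \tfrac12\tfrac{\mathrm{d}}{\mathrm{d}t}\|(\varrho-\mathrm{comp})^{\pm}\|_{L^2}^2 + (\text{lower order})$ when $\partial_t\varrho$ lives only in $L^2(0,T;(W^{1,2})^*)$, which needs the standard time-regularization lemma and is exactly why the truncation argument has to be run at the level of the limit equation rather than the Galerkin level. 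One cosmetic slip: in your basic energy identity the right-hand side should read $-\tfrac12\int_\Omega\varrho_m^2\,\divv_x\textbf{u}\,\mathrm{d}x$ (the transport term integrated against $\varrho_m$ contributes $-\tfrac12\int\varrho_m^2\divv_x\textbf{u}$ after moving it to the right), but since Gronwall only sees $|\divv_x\textbf{u}|$ this does not affect any of the bounds.
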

	\begin{proof}
		For the existence of weak solutions and for (i) see Crippa, Donadello and Spinolo \cite{CriDonSpi}, Lemmas 3.2 and 3.4, for (ii) see Abbatiello, Feireisl and Novotn\'{y} \cite{AbbFeiNov}, Corollary 3.4, and for (iii) see Chang, Jin and Novotn\'{y} \cite{ChaJinNov}, Lemma 4.3 point 3.
	\end{proof}
	
	\subsection{On the approximated balance of momentum}
	
	Let us now turn our attention to the approximated problem \eqref{approximation momentum equation}--\eqref{relation viscous stress with F delta}. Following the same approach performed by Feireisl \cite{Fei}, we will first solve the problem on a time interval $[0,T(n)]$ via a fixed point argument, where $T(n)$ depends on the dimension $n$ of the finite-dimensional space $X_n$. Subsequently we will establish estimates independent of time and iterate the same procedure to finally obtain, after a finite number of steps, our solution $\textbf{u}$ on the whole time interval $[0,T]$.
	
	\subsubsection{Technical preliminaries}
	
	For any $\varrho \in  L^1(\Omega)$, consider the operator $\mathscr{M}[\varrho]: X_n \rightarrow X_n^*$ such that
	\begin{equation} \label{operator M}
	\langle \mathscr{M}[\varrho] \textbf{v}, \textbf{w} \rangle \equiv \int_{\Omega}\varrho \textbf{v} \cdot \textbf{w} \ \textup{d}x,
	\end{equation}
	with $\langle \cdot, \cdot \rangle$ the $L^2$-standard scalar product. In particular, we have
	\begin{equation} \label{norm operator M}
	\|\mathscr{M}[\varrho]\|_{\mathcal{L}(X_n, X_n^*)}=  \sup_{\|\textbf{v} \|_{X_n}, \|\textbf{w}\|_{X_n}\leq 1} |\langle \mathscr{M}[\varrho] \textbf{v}, \textbf{w} \rangle| \leq c(n) \|\varrho\|_{L^1(\Omega)},
	\end{equation}
	It is easy to see that the operator $\mathscr{M}$ is invertible provided $\varrho$ is strictly positive on $\Omega$, and in particular we have
	\begin{equation*}
	\| \mathscr{M}^{-1}[\varrho]\|_{\mathscr{L}(X_n^*; X_n)} = \frac{1}{\inf\{ \|\mathscr{M}[\varrho]\textbf{v}\|_{X_n^*}: \textbf{v}\in X_n, \ \|\textbf{v}\|_{X_n}=1 \}} \leq \frac{c(n)}{\inf_{\Omega}\varrho}. 
	\end{equation*}
	Moreover, the identity
	\begin{equation*}
	\mathscr{M}^{-1}[\varrho_1]-\mathscr{M}^{-1}[\varrho_2]= \mathscr{M}^{-1}[\varrho_2] \left( \mathscr{M}[\varrho_2]-\mathscr{M}[\varrho_1]\right)\mathscr{M}^{-1}[\varrho_1]
	\end{equation*}
	can be used to obtain 
	\begin{equation} \label{difference inverse operator M}
	\left\|\mathscr{M}^{-1}[\varrho_1]-\mathscr{M}^{-1}[\varrho_2]\right\|_{\mathscr{L}(X_n^*; X_n)} \leq c\left(n, \inf_{\Omega} \varrho_1,  \inf_{\Omega} \varrho_2 \right) \left\| \varrho_1-\varrho_2\right\|_{L^1(\Omega)}
	\end{equation}
	for any $\varrho_1, \varrho_2>0$.
	
	\subsubsection{Fixed point argument} 
	
	The approximate velocities $\textbf{u} \in C([0,T]; X_n)$ are looked for to satisfy the integral identity
	\begin{equation} \label{projection momentum equation to finite-dimesional space}
	\begin{aligned}
	\left[ \int_{\Omega}\varrho \textbf{u}(t,\cdot) \cdot \bm{\psi}\ \textup{d}x\right]_{t=0}^{t=\tau} &= \int_{0}^{\tau} \int_{\Omega} \left[ (\varrho \textbf{u} \otimes \textbf{u}) : \nabla_x \bm{\psi} +a\varrho\divv_x \bm{\psi}\right] \textup{d}x\textup{d}t \\
	&- \int_{0}^{\tau} \int_{\Omega} [\partial F_{\delta} (\mathbb{D}_x \textbf{u}): \nabla_x \bm{\psi} +\varepsilon\nabla_x \varrho \cdot \nabla_x \textbf{u} \cdot \bm{\psi}] \ \textup{d}x\textup{d}t
	\end{aligned}
	\end{equation} 
	for any test function $\bm{\psi} \in X_n$ and all $\tau \in [0,T]$. Now, the integral identity \eqref{projection momentum equation to finite-dimesional space} can be rephrased for any $\tau\in [0,T]$ as
	\begin{equation*}
	\langle \mathscr{M}[\varrho(\tau, \cdot)] (\textbf{u}(\tau, \cdot)), \bm{\psi}\rangle = \langle \textbf{m}_0^*, \bm{\psi} \rangle + \langle \int_{0}^{\tau} \mathscr{N} [\varrho(s, \cdot), \textbf{u}(s,\cdot)] \ \textup{d}s, \bm{\psi} \rangle
	\end{equation*}
	with $\mathscr{M}[\varrho]: X_n \rightarrow X_n^*$ defined as in \eqref{operator M}, $\textbf{m}_0^* \in X_n^*$ such that
	\begin{equation*}
	\langle \textbf{m}_0^*, \bm{\psi} \rangle:= \int_{\Omega} \textbf{m}_0 \cdot \bm{\psi}  \ \textup{d}x
	\end{equation*}
	and $\mathscr{N}[\varrho(s, \cdot), \textbf{u}(s, \cdot)] \in X_n^*$ such that
	\begin{align*}
	\langle  \mathscr{N} [\varrho(s, \cdot), \textbf{u}(s, \cdot)], \bm{\psi}\rangle &:=\int_{\Omega} \left[ (\varrho \textbf{u} \otimes \textbf{u}-\partial F_{\delta} (\mathbb{D}_x \textbf{u})) : \nabla_x \bm{\psi} +a\varrho\divv_x \bm{\psi}\right] (s, \cdot) \ \textup{d}x \\
	& - \varepsilon \int_{\Omega} \nabla_x \varrho \cdot \nabla_x \textbf{u} \cdot \bm{\psi} (s, \cdot) \ \textup{d}x.
	\end{align*}
	Here, $\varrho=\varrho[\textbf{u}]$ is the weak solution uniquely determined by $\textbf{u}$ and thus by  Lemma \ref{existence approximated densities}, conditions (i) and (ii), for any $t\in [0,T]$ we have
	\begin{equation} \label{bound from below and above of density}
	0<\underline{\varrho} \exp \left( -t \|\divv_x \textbf{u}\|_{L^{\infty}((0,T) \times \Omega)}\right)\leq \varrho(t,x) \leq \overline{\varrho}\exp \left( t \|\divv_x \textbf{u}\|_{L^{\infty}((0,T) \times \Omega)}\right),
	\end{equation}
	where $\overline{\varrho}$, $\underline{\varrho}$ are defined as in \eqref{maximum approximated initial density}, \eqref{minimum approximated initial density} respectively. In particular, the operator $\mathscr{M}$ is invertible and hence, for any $\tau\in [0,T]$, we can write
	\begin{equation*}
	\textbf{u}(\tau, \cdot) = \mathscr{M}^{-1}[\varrho(\tau, \cdot)] \left(\textbf{m}_0^*+\int_{0}^{\tau} \mathscr{N} [\varrho(s, \cdot), \textbf{u}(s, \cdot)] \ \textup{d}s\right).
	\end{equation*}
	
	For $K$ and $T(n)$ to be fixed, consider a bounded ball $\mathcal{B}(0, \underline{n}K)$ in the space $C([0,T(n)]; X_n)$, with $\underline{n}$ defined as in \eqref{connection norm in Xn and W},
	\begin{equation*}
	\mathcal{B}(0, \underline{n}K):= \left\{ \textbf{v} \in C([0,T(n)]; X_n)\big| \sup_{t\in[0,T(n)]} \| \textbf{v}(t, \cdot)\|_{X_n}\leq \underline{n}K \right\},
	\end{equation*}
	and define a mapping 
	\begin{equation*}
	\mathscr{F}: \mathcal{B}(0, \underline{n}K) \rightarrow C([0,T(n)]; X_n)
	\end{equation*}
	such that for all $\tau\in [0,T(n)]$
	\begin{equation*}
	\mathscr{F}[\textbf{u}](\tau, \cdot):= \mathscr{M}^{-1}[\varrho(\tau, \cdot)] \left(\textbf{m}_0^*+\int_{0}^{\tau} \mathscr{N} [\varrho(s, \cdot), \textbf{u}(s, \cdot)] \ \textup{d}s\right).
	\end{equation*}
	Notice that for every $\textbf{u} \in \mathcal{B}(0, \underline{n}K)$, from \eqref{connection norm in Xn and W} we obtain in particular that for all $t\in [0,T(n)]$
	\begin{equation*}
	\| \textbf{u}(t, \cdot)\|_{W^{1,\infty}(\Omega; \mathbb{R}^d)} \leq K 
	\end{equation*}
	and thus, from \eqref{bound from below and above of density} we obtain that for all $t\in [0,T(n)]$
	\begin{equation*}
	\underline{\varrho} e^{-Kt} \leq \varrho(t,x) \leq \overline{\varrho} e^{Kt}.
	\end{equation*}
	Moreover, it is easy to deduce that for every $\textbf{u} \in \mathcal{B}(0, \underline{n}K)$, $\varrho=\varrho[\textbf{u}]$ and every $t\in [0,T(n)]$
	\begin{equation*}
	\| \mathscr{N}(\varrho(t, \cdot), \textbf{u}(t, \cdot))\|_{X_n^*} \leq c_2(\overline{\varrho}, K, T),
	\end{equation*}
	and for every $\textbf{u}_1, \textbf{u}_2 \in \mathcal{B}(0, \underline{n}K)$, $\varrho_i=\varrho[\textbf{u}_i]$, $i=1,2$ and $t\in [0,T(n)]$, making use of \eqref{difference two solution approximates densities},
	\begin{equation*}
	\| \mathscr{N}(\varrho_1(t, \cdot), \textbf{u}_1(t, \cdot))-\mathscr{N}(\varrho_2(t, \cdot), \textbf{u}_2(t, \cdot))\|_{X_n^*} \leq c_3(\overline{\varrho}, K, T) \| \textbf{u}_1(t, \cdot) - \textbf{u}_2(t, \cdot)\|_{W^{1,\infty}(\Omega; \mathbb{R}^d)}.
	\end{equation*}
	Then, for every $\textbf{u} \in \mathcal{B}(0, \underline{n}K)$, $\varrho=\varrho[\textbf{u}]$ and every $t\in [0,T(n)]$
	\begin{align*}
	\| \mathscr{F}(\textbf{u})(t, \cdot)\|_{X_n} &\leq \| \mathscr{M}^{-1}[\varrho(t, \cdot)]\|_{\mathscr{L}(X_n^*; X_n)} (\| \textbf{m}_0^*\|_{X_n^*}+  \| \mathscr{N}(\varrho(t, \cdot), \textbf{u}(t, \cdot))\|_{X_n^*} \ t) \\
	&\leq \frac{c(n)}{\underline{\varrho}} \ e^{KT(n)} \ \big( \| \textbf{m}_0^*\|_{X_n^*} + c_2 \ T(n)\big),
	\end{align*}
	and for every $\textbf{u}_1, \textbf{u}_2 \in \mathcal{B}(0, \underline{n}K)$, $\varrho_i=\varrho[\textbf{u}_i]$, $i=1,2$ and $t\in [0,T(n)]$,
	\begin{align*}
	\| \mathscr{F}(\textbf{u}_1)(t, \cdot) &-\mathscr{F}(\textbf{u}_2)(t, \cdot) \|_{X_n} \\
	\leq&  \left\| \left( \mathscr{M}^{-1}[\varrho_1(t, \cdot)]- \mathscr{M}^{-1}[\varrho_2(t, \cdot)] \right) \left[\int_{0}^{t} \mathscr{N}(\varrho_1(s, \cdot), \textbf{u}_1(s, \cdot)) \ \textup{d}s \right] \right\|_{X_n} \\
	&+ \left\|  \mathscr{M}^{-1}[\varrho_2(t, \cdot)] \left[\int_{0}^{t} [\mathscr{N}(\varrho_1(s, \cdot), \textbf{u}_1(s, \cdot))- \mathscr{N}(\varrho_2(s, \cdot), \textbf{u}_2(s, \cdot))] \ \textup{d}s \right] \right\|_{X_n} \\
	\leq& \  t \left\| \mathscr{M}^{-1}[\varrho_1(t, \cdot)]- \mathscr{M}^{-1}[\varrho_2(t, \cdot)] \right\|_{\mathscr{L}(X_n^*; X_n)} \| \mathscr{N}(\varrho_1(t, \cdot), \textbf{u}_1(t, \cdot)) \|_{X_n^*}\\
	&+t \left\|\mathscr{M}^{-1}[\varrho_2(t, \cdot)]\right\|_{\mathcal{L}(X_n^*, X_n)} \| \mathscr{N}(\varrho_1(t, \cdot), \textbf{u}_1(t, \cdot))- \mathscr{N}(\varrho_2(t, \cdot), \textbf{u}_2(t, \cdot))\|_{X_n^*}\\
	\leq& \ c(n) \ \frac{c_2}{(\underline{\varrho})^2} \ e^{2Kt}t \ \| \varrho_1(t, \cdot)-\varrho_2(t, \cdot)\|_{L^1(\Omega)} +c(n) \ \frac{c_3}{\underline{\varrho}} \ e^{Kt}t \  \|\textbf{u}_1(t, \cdot)-\textbf{u}_2(t, \cdot)\|_{W^{1,\infty}(\Omega; \mathbb{R}^d)} \\
	\leq& \ c(n) \left( \frac{c_1 c_2}{(\underline{\varrho})^2} + \frac{c_3}{\underline{\varrho}}\right) e^{2Kt}t \ \| \textbf{u}_1(t,\cdot) -\textbf{u}_2(t, \cdot) \|_{W^{1,\infty}(\Omega; \mathbb{R}^d)} \\
	\leq& \ T(n) \ \frac{c(n)}{\underline{n}}\left( \frac{c_1 c_2}{(\underline{\varrho})^2} + \frac{c_3}{\underline{\varrho}}\right) e^{2KT(n)}\ \| \textbf{u}_1(t,\cdot) -\textbf{u}_2(t,\cdot) \|_{X_n}.
	\end{align*}
	Now, taking $K>0$ sufficiently large and $T(n)$ sufficiently small, so that
	\begin{equation*}
	\frac{c(n)}{\underline{\varrho}} \ e^{KT(n)} \ \big( \| \textbf{m}_0^*\|_{X_n^*} + c_2 \ T(n)\big) \leq \underline{n}K,
	\end{equation*}
	and 
	\begin{equation*}
	T(n) \ \frac{c(n)}{\underline{n}}\ \left( \frac{c_1 c_2}{(\underline{\varrho})^2} + \frac{c_3}{\underline{\varrho}}\right) e^{2KT(n)}<1,
	\end{equation*}
	we obtain that $\mathscr{F}$ is a contraction mapping from the closed ball $\mathcal{B}(0, \underline{n} K)$ into itself. From the Banach-Cacciopoli fixed point theorem, we recover that $\mathscr{F}$ admits a unique fixed point $\textbf{u} \in C([0,T(n)]; X_n)$, which in particular solves the integral identity \eqref{projection momentum equation to finite-dimesional space}.
	
	This procedure can be repeated a finite number of times until we reach $T=T(n)$, as long as we have a bound on $\textbf{u}$ independent of $T(n)$. the next section will be dedicated to establish all the necessary estimates.
	
	\subsubsection{Estimates independent of time}
	We start with the \textit{energy estimates}. It follows from \eqref{projection momentum equation to finite-dimesional space} that $\textbf{u}$ is continuously differentiable and, consequently, the integral identity 
	\begin{equation} \label{projection velocities 2} 
	\begin{aligned}
	\int_{\Omega}\partial_t(\varrho \textbf{u}) \cdot \bm{\psi}\ \textup{d}x &= \int_{\Omega} \left[ \varrho \textbf{u} \otimes \textbf{u} : \nabla_x \bm{\psi} +a\varrho\divv_x \bm{\psi}-\partial F_{\delta} (\mathbb{D}_x \textbf{u}): \nabla_x \bm{\psi} \right] \textup{d}x \\
	&- \varepsilon \int_{\Omega} [ \nabla_x \varrho \cdot \nabla_x \textbf{u} \cdot \bm{\psi}] \ \textup{d}x
	\end{aligned}
	\end{equation} 
	holds on $(0,T(n))$ for any $\bm{\psi} \in X_n$, with $\varrho= \varrho[\textbf{u}]$. We recall that in this context the pressure potential $P=P(\varrho)$ satisfies the following identity
	\begin{equation*}
		a \varrho \divv_x \textbf{u} = -\partial_t P(\varrho) -\divv_x(P(\varrho) \textbf{u}) +\varepsilon \ a  (\log \varrho +1) \Delta_x \varrho.
	\end{equation*}
	
	Now, taking $\bm{\psi}=\textbf{u}$ in \eqref{projection velocities 2} and noticing that
	\begin{align*}
	\int_{\Omega}[\partial_t(\varrho \textbf{u}) \cdot\textbf{u} -   \varrho \textbf{u} \otimes \textbf{u} : \nabla_x \textbf{u}] \ \textup{d}x&=\frac{\textup{d}}{\textup{d}t} \int_{\Omega} \frac{1}{2} \varrho |\textbf{u}|^2 \ \textup{d}x +\frac{1}{2} \int_{\Omega} (\partial_t \varrho+ \divv_x(\varrho\textbf{u})) |\textbf{u}|^2 \ \textup{d}x \\
	&=\frac{\textup{d}}{\textup{d}t} \int_{\Omega} \frac{1}{2} \varrho |\textbf{u}|^2 \textup{d}x +\frac{\varepsilon}{2} \int_{\Omega} \Delta_x \varrho |\textbf{u}|^2 \ \textup{d}x,
	\end{align*}
	where, using the boundary condition \eqref{Neumann boundary condition},
	\begin{align*}
	\frac{\varepsilon}{2}\int_{\Omega} |\textbf{u}|^2 \Delta_x \varrho \ \textup{d}x &= \frac{\varepsilon}{2}\int_{\Omega} |\textbf{u}|^2 \divv_x \nabla_x \varrho \ \textup{d}x \\
	&= \frac{\varepsilon}{2}\int_{\Omega} \divv_x ( |\textbf{u}|^2 \nabla_x \varrho) \ \textup{d}x - \frac{\varepsilon}{2}\int_{\Omega} \nabla_x \varrho \cdot \nabla_x \textbf{u} \cdot 2 \textbf{u} \ \textup{d}x \\
	&=\frac{\varepsilon}{2}\int_{\partial\Omega}|\textbf{u}|^2 \nabla_x \varrho \cdot \textbf{n} \ \textup{d}S_x - \varepsilon\int_{\Omega} \nabla_x \varrho \cdot \nabla_x \textbf{u} \cdot \textbf{u} \ \textup{d}x \\
	&= -\varepsilon\int_{\Omega} \nabla_x \varrho \cdot \nabla_x \textbf{u} \cdot \textbf{u} \ \textup{d}x,
	\end{align*}
	and
	\begin{align*}
	\int_{\Omega} (\log \varrho +1) \Delta_x \varrho  \ \textup{d}x&= \int_{\Omega} (\log \varrho +1)  \divv_x \nabla_x \varrho  \ \textup{d}x \\
	&= \int_{\Omega} \divv_x \left[(\log \varrho +1)  \nabla_x\varrho \right] \textup{d}x - \int_{\Omega} \nabla_x (\log \varrho +1)  \cdot \nabla_x \varrho  \ \textup{d}x \\
	&= \int_{\partial \Omega}(\log \varrho +1) \nabla_x\varrho \cdot \textbf{n} \ \textup{d}S_x -\int_{\Omega} \frac{d}{d\varrho}(\log \varrho +1)|\nabla_x \varrho|^2  \ \textup{d}x \\
	&=- \int_{\Omega} \frac{1}{\varrho} |\nabla_x \varrho|^2  \ \textup{d}x =- \int_{\Omega} P''(\varrho) |\nabla_x \varrho|^2  \ \textup{d}x ,
	\end{align*}
	we finally obtain
	\begin{equation} \label{first approximate energy equality}
	\frac{\textup{d}}{\textup{d}t} \int_{\Omega}\left[ \frac{1}{2} \varrho |\textbf{u}|^2 +P(\varrho)\right] \textup{d}x= -\varepsilon \int_{\Omega} P''(\varrho) |\nabla_x \varrho|^2 \ \textup{d}x - \int_{\Omega} \partial F_{\delta} (\mathbb{D}_x \textbf{u}): \nabla_x \textbf{u} \ \textup{d}x.
	\end{equation}
	Note that we got rid of the integral 				$\frac{\varepsilon}{2}\int_{\Omega} |\textbf{u}_n|^2 \Delta_x \varrho_n \textup{d}x$ thanks to the extra term $\varepsilon \nabla_x \varrho_n \cdot \nabla_x \textbf{u}_n$ in \eqref{approximation momentum equation}. Since all the quantities involved are at least continuous in time, we may integrate \eqref{first approximate energy equality} over $(0,\tau)$ in order to get the following energy equality
	\begin{equation} \label{second approximate energy equality}
	\begin{aligned}
	\int_{\Omega}\left[ \frac{1}{2} \varrho |\textbf{u}|^2 +P(\varrho)\right](\tau, \cdot) \ \textup{d}x &+ \int_{0}^{\tau} \int_{\Omega}  \left[\partial F_{\delta} (\mathbb{D}_x \textbf{u}): \nabla_x \textbf{u} + \varepsilon P''(\varrho) |\nabla_x \varrho|^2\right] \textup{d}x \textup{d}t \\
	&= \int_{\Omega} \left[\frac{1}{2} \frac{|\textbf{m}_0|^2}{\varrho_{0,n}}+ P(\varrho_{0,n})\right]\textup{d}x,
	\end{aligned}
	\end{equation}
	for any time $\tau \in [0,T(n)]$. In particular, if we suppose the initial value of the (modified) total energy
	\begin{equation} \label{initial energies independent of n}
		 \int_{\Omega} \left[\frac{1}{2} \frac{|\textbf{m}_0|^2}{\varrho_{0,n}} +P(\varrho_{0,n})\right] \textup{d}x \leq \overline{E}
	\end{equation}
	where the constant $\overline{E}$ is independent of $n>0$, the term on the right-hand side of \eqref{second approximate energy equality} is bounded.
	
	Now, the following result, collecting all the significant properties of the regularized potential $F_{\delta}$, is needed.
	\begin{proposition}
		For every fixed $\delta>0$ and $F$ satisfying hypothesis \eqref{conditions on F}--\eqref{relation F and trsce-less part}, the function $F_{\delta}$ defined in \eqref{approximation potential} is convex, non-negative, infinitely differentiable and such that
		\begin{equation} \label{relation F delta trace-less part}
		F_{\delta}(\mathbb{D})\geq \nu \left| \mathbb{D}-\frac{1}{d}\trace[\mathbb{D}]\mathbb{I} \right|^q-c \quad \mbox{for all } \mathbb{D} \in \mathbb{R}_{\textup{sym}}^{d\times d}
		\end{equation}
		with $\nu>0$, $c>0$, $q>1$ independent of $\delta$.
	\end{proposition}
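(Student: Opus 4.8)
The plan is to check the four asserted properties of $F_\delta$ separately, the only delicate point being the uniformity in $\delta$ of the constants in the coercivity estimate \eqref{relation F delta trace-less part}.

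First I would record that, since $F$ in \eqref{conditions on F} is convex and finite-valued on the whole finite-dimensional space $\mathbb{R}^{d\times d}_{\textup{sym}}$, it is automatically continuous, hence locally bounded and locally Lipschitz; this makes the convolution $\xi_\delta * F$ well defined. Differentiating under the integral sign --- legitimate because the regularizing kernels $\xi_\delta$ are smooth with compact support --- shows $\xi_\delta * F \in C^\infty(\mathbb{R}^{d\times d}_{\textup{sym}})$, and subtracting the constant $\inf(\xi_\delta * F)$ does not affect this, so $F_\delta$ is infinitely differentiable. Convexity of $\xi_\delta * F$ follows from the standard computation
\[
(\xi_\delta * F)\big(\lambda \mathbb{D}_1 + (1-\lambda)\mathbb{D}_2\big) = \int \xi_\delta(\mathbb{Y})\, F\big(\lambda(\mathbb{D}_1 - \mathbb{Y}) + (1-\lambda)(\mathbb{D}_2 - \mathbb{Y})\big)\,\mathrm{d}\mathbb{Y} \leq \lambda\,(\xi_\delta * F)(\mathbb{D}_1) + (1-\lambda)\,(\xi_\delta * F)(\mathbb{D}_2),
\]
using $\xi_\delta \geq 0$, $\int \xi_\delta = 1$ and the convexity of $F$; the constant shift preserves convexity. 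Non-negativity of $F_\delta$ is built into the definition \eqref{approximation potential}, once one observes that $\inf(\xi_\delta * F)$ is finite: indeed $F \geq 0$ and $\xi_\delta \geq 0$ force $\xi_\delta * F \geq 0$ pointwise, so this infimum lies in $[0,\infty)$.

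It remains to establish \eqref{relation F delta trace-less part}. Let $P : \mathbb{R}^{d\times d}_{\textup{sym}} \to \mathbb{R}^{d\times d}_{\textup{sym}}$, $P(\mathbb{A}) := \mathbb{A} - \tfrac{1}{d}\trace[\mathbb{A}]\mathbb{I}$, be the \emph{linear} projection onto trace-free symmetric tensors, so that $|P(\mathbb{A})| \leq |\mathbb{A}|$. Since only $\delta \to 0$ is relevant, I may assume $\delta \in (0,1)$, so that $\supp \xi_\delta \subset B_R(0)$ for a fixed $R > 0$. From \eqref{relation F and trsce-less part} and the linearity of $P$,
\[
(\xi_\delta * F)(\mathbb{D}) = \int \xi_\delta(\mathbb{Y})\, F(\mathbb{D} - \mathbb{Y})\,\mathrm{d}\mathbb{Y} \geq \mu \int \xi_\delta(\mathbb{Y})\, \big|P(\mathbb{D}) - P(\mathbb{Y})\big|^q\,\mathrm{d}\mathbb{Y} - c .
\]
Then I would apply the elementary inequality $|a - b|^q \geq 2^{1-q}|a|^q - |b|^q$ --- a consequence of $|a|^q \leq 2^{q-1}(|a-b|^q + |b|^q)$, valid for $q \geq 1$ --- with $a = P(\mathbb{D})$, $b = P(\mathbb{Y})$, together with $|P(\mathbb{Y})| \leq |\mathbb{Y}| \leq R$ on $\supp \xi_\delta$ and $\int \xi_\delta = 1$, obtaining
\[
(\xi_\delta * F)(\mathbb{D}) \geq 2^{1-q}\mu\, \big|P(\mathbb{D})\big|^q - c - \mu R^q .
\]
Finally, $\inf(\xi_\delta * F) \leq (\xi_\delta * F)(0) = \int \xi_\delta(\mathbb{Y})\, F(-\mathbb{Y})\,\mathrm{d}\mathbb{Y} \leq \sup_{B_R(0)} F =: M < \infty$ by continuity of $F$, hence
\[
F_\delta(\mathbb{D}) = (\xi_\delta * F)(\mathbb{D}) - \inf(\xi_\delta * F) \geq 2^{1-q}\mu\, \big|P(\mathbb{D})\big|^q - \big(c + \mu R^q + M\big),
\]
which is precisely \eqref{relation F delta trace-less part} with $\nu := 2^{1-q}\mu$, the same exponent $q > 1$, and the new constant $c + \mu R^q + M$, all independent of $\delta$.

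I expect the main obstacle to be exactly this last uniformity. It works because $\supp \xi_\delta$ stays inside a fixed ball $B_R(0)$, so the mollification-error terms $|P(\mathbb{Y})|^q$ and $(\xi_\delta * F)(0)$ are controlled by constants not depending on $\delta$, and because the trace-free projection $P$ is linear, which is what lets $P(\mathbb{D})$ be decoupled cleanly from $P(\mathbb{Y})$ inside the integral. The rest is routine.
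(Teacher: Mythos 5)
Your proof is correct and follows the same overall strategy as the paper's: convexity, smoothness and non-negativity are handled the same way, and the coercivity estimate \eqref{relation F delta trace-less part} is obtained by mollifying the pointwise lower bound \eqref{relation F and trsce-less part} and using that $\supp\xi_\delta$ stays inside a fixed compact set. The one technical difference is in the coercivity step: the paper applies Minkowski's integral inequality to pull the $L^q(\xi_\delta\,\mathrm d\mathbb{Z})$-norm apart and then invokes a scalar inequality $(y-c_1)^q\geq\alpha y^q-c_2$, whereas you apply the pointwise convexity inequality $|a-b|^q\geq 2^{1-q}|a|^q-|b|^q$ directly inside the integrand before integrating. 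The two routes are essentially equivalent, but yours is a bit more direct (no need for Minkowski), and it gives an explicit constant $\nu=2^{1-q}\mu$. You are also more explicit than the paper about why $\inf(\xi_\delta*F)$ is bounded uniformly in $\delta$ (the paper carries it as $C_1$ and asserts $c=C_1+C_2$ is $\delta$-independent without remarking that $C_1$ a priori depends on $\delta$ but is controlled by $\sup_{B_R}F$). One small note: the paper's displayed chain also silently drops the additive constant $-c$ from \eqref{relation F and trsce-less part} in the second line, which you carry correctly.
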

	\begin{proof}
		For every fixed $\delta>0$, the non-negativity of $F_{\delta}$ is trivial while smoothness follows from the fact that each derivative can be transferred to the mollifiers $\xi_{\delta}$. Moreover, for every $\mathbb{A}, \mathbb{B} \in \mathbb{R}_{\textup{sym}}^{d\times d}$ and every $t\in [0,1]$, denoting
		\begin{equation*}
		C_1:= \inf_{\mathbb{D}\in \mathbb{R}^{d\times d}_{\textup{sym}}} \int_{\mathbb{R}^{d\times d}_{\textup{sym}}} \xi_{\delta}(|\mathbb{D}-\mathbb{Z}|) F(\mathbb{Z}) \ \textup{d}\mathbb{Z}
		\end{equation*}
		we have
		\begin{align*}
		F_{\delta}&(t\mathbb{A}+(1-t)\mathbb{B}) \\
		&= \int_{\mathbb{R}^{d\times d}_{\textup{sym}}} F\big(t(\mathbb{A}+\mathbb{Z})+(1-t)(\mathbb{B}+\mathbb{Z})\big) \  \xi_{\delta}(|\mathbb{Z}|)  \ \textup{d} \mathbb{Z} + tC_1 -(1-t)C_1 \\
		&\leq t \left(\int_{\mathbb{R}^{d\times d}_{\textup{sym}}} F(\mathbb{A}+\mathbb{Z}) \  \xi_{\delta}(|\mathbb{Z}|)  \ \textup{d} \mathbb{Z}+C_1\right) +(1-t) \left(\int_{\mathbb{R}^{d\times d}_{\textup{sym}}} F(\mathbb{B}+\mathbb{Z}) \  \xi_{\delta}(|\mathbb{Z}|)  \ \textup{d} \mathbb{Z}+C_1\right) \\
		&= tF_{\delta} (\mathbb{A}) +(1-t) F_{\delta}(\mathbb{B}),
		\end{align*}	
		where we have simply summed and subtracted terms $t\mathbb{Z}$, $tC_1$ in the second line and used the convexity of $F$ in the third line. In particular, we get that for every fixed $\delta>0$, $F_{\delta}: \mathbb{R}^{d\times d}_{\textup{sym}} \rightarrow [0,\infty)$ is convex. 
		
		Let now $\mathbb{D} \in \mathbb{R}^{d\times d}_{\textup{sym}}$ be fixed. From \eqref{relation F and trsce-less part}, we have
		\begin{align*}
		F_{\delta}(\mathbb{D}) &= \int_{\mathbb{R}^{d\times d}_{\textup{sym}}} F(\mathbb{D}-\mathbb{Z}) \xi_{\delta}(|\mathbb{Z}|)  \ \textup{d}\mathbb{Z} - C_1 \\
		&\geq \mu \int_{\mathbb{R}^{d\times d}_{\textup{sym}}} \left| \left( \mathbb{D}-\frac{1}{d} \trace[\mathbb{D}] \mathbb{I} \right) -  \left( \mathbb{Z}-\frac{1}{d} \trace[\mathbb{Z}] \mathbb{I} \right)\right|^q \xi_{\delta}(|\mathbb{Z}|)  \ \textup{d}\mathbb{Z} - C_1.
		\end{align*}
		Applying Minkowski's inequality, we get
		\begin{align*}
		\int_{\mathbb{R}^{d\times d}_{\textup{sym}}} &\left| \left( \mathbb{D}-\frac{1}{d} \trace[\mathbb{D}] \mathbb{I} \right) -  \left( \mathbb{Z}-\frac{1}{d} \trace[\mathbb{Z}] \mathbb{I} \right)\right|^q \xi_{\delta}(|\mathbb{Z}|)  \ \textup{d}\mathbb{Z} \\
		\geq& \ \left[ \left( \int_{\mathbb{R}^{d\times d}_{\textup{sym}}} \left| \mathbb{D}-\frac{1}{d} \trace[\mathbb{D}] \mathbb{I}  \right|^q \xi_{\delta}(|\mathbb{Z}|)  \textup{d} \mathbb{Z} \right)^{\frac{1}{q}} - \left(\int_{\mathbb{R}^{d\times d}_{\textup{sym}}} \left| \mathbb{Z}-\frac{1}{d} \trace[\mathbb{Z}] \mathbb{I}  \right|^q \xi_{\delta}(|\mathbb{Z}|) \textup{d} \mathbb{Z}\right)^{\frac{1}{q}}\right]^q;
		\end{align*}
		recalling that for any $\delta>0$ sufficiently small $\supp \xi_{\delta} \subset K$ with $K\subset \mathbb{R}^{d\times d}_{\textup{sym}}$ a compact set and that for any $\delta>0$
		\begin{equation*}
		\int_{\mathbb{R}^{d\times d}_{\textup{sym}}} \xi_{\delta}(|\mathbb{Z}|) \ \textup{d}\mathbb{Z}= \frac{1}{\delta^d} \int_{\mathbb{R}^{d\times d}_{\textup{sym}}}\xi\left( \frac{|\mathbb{Z}|}{\delta} \right) \textup{d}\mathbb{Z}= \int_{\mathbb{R}^{d\times d}_{\textup{sym}}} \xi(|\mathbb{Z}|) \ \textup{d}\mathbb{Z} =1,
		\end{equation*}
		we obtain the following inequality
		\begin{equation*}
		F_{\delta}(\mathbb{D}) \geq   \mu \left[ \left| \mathbb{D}-\frac{1}{d} \trace[\mathbb{D}] \mathbb{I}  \right|- \left( \sup_{\mathbb{Z}\in K} \left| \mathbb{Z}-\frac{1}{d} \trace[\mathbb{Z}] \mathbb{I}  \right|^q\right)^{\frac{1}{q}}  \right]^q-C_1.
		\end{equation*}
		Now, for every fixed $q>1$ and constant $c_1>0$, there exist $\alpha=\alpha(q, c_1) \in (0,1)$ and $c_2=c_2(q, c_1)>0$ such that
		\begin{equation*}
		(y-c_1)^q \geq \alpha  y^q- c_2 \quad \mbox{for any } y\geq 0;
		\end{equation*}
		in particular, we get that for all $\mathbb{D}\in \mathbb{R}^{d\times d}_{\textup{sym}}$
		\begin{equation*}
		F_{\delta}(\mathbb{D}) \geq \mu \alpha \left| \mathbb{D}-\frac{1}{d} \trace[\mathbb{D}] \mathbb{I}  \right|^q - (C_1+C_2) 
		\end{equation*}
		and thus \eqref{relation F delta trace-less part} holds choosing $\nu=\mu \alpha$ and $c=C_1+C_2$.
	\end{proof}

	From \eqref{second approximate energy equality} and \eqref{initial energies independent of n} we can deduce that
	\begin{equation*}
		\| F_{\delta} (\mathbb{D}_x \textbf{u}_n) \|_{L^1((0,T)\times \Omega)} \leq c(\overline{E})
	\end{equation*}
	which, from \eqref{relation F delta trace-less part}, implies
	\begin{equation*}
	\left\| \mathbb{D}_x\textbf{u}_n -\frac{1}{d} (\divv_x \textbf{u}_n) \mathbb{I}\right\|_{L^q((0,T)\times \Omega;\mathbb{R}^{d\times d})} \leq c(\overline{E}).
	\end{equation*}
	The previous inequality combined with the $L^q$-version of the trace-free Korn's inequality, see \cite{BreCiaDie}, Theorem 3.1, gives
	\begin{equation*}
		\| \nabla_x \textbf{u}_n\|_{L^q((0,T)\times \Omega; \mathbb{R}^{d\times d})} \leq c(\overline{E});
	\end{equation*}
	the standard Poincar\'{e} inequality ensures then
	\begin{equation*}
		\textbf{u} \mbox{ to be bounded in } L^q(0,T(n); W_0^{1,q}(\Omega; \mathbb{R}^d))
	\end{equation*}
	by a constant which is independent of $n$ and $T(n)\leq T$. Since all norms are equivalent in $X_n$, this implies that 
	\begin{equation*}
		\textbf{u} \mbox{ is bounded in } L^q(0,T(n); W^{1,\infty}(\Omega; \mathbb{R}^d));
	\end{equation*}
	in particular, by virtue of \eqref{bound above density} and \eqref{bound below density}, the density $\varrho= \varrho[\textbf{u}]$ is bounded from below and above by constants independent of $T(n)\leq T$. Since $\varrho$ is bounded from below, one can use \eqref{second approximate energy equality} to easily deduce uniform boundedness in $t$ of $\textbf{u}$ in the space $L^2(\Omega; \mathbb{R}^d)$. Consequently, the functions $\textbf{u}(t, \cdot)$ remain bounded in $X_n$ for any $t$ independently of $T(n)\leq T$. Thus we are allowed to iterate the previous local existence result to construct a solution defined on the whole time interval $[0,T]$. 
	
	Summarizing, so far we proved the following result.
	
	\begin{lemma} \label{existence in delta epsilon n}
		For every fixed $\delta>0$, $\varepsilon>0$, $n\in \mathbb{N}$, and any $\varrho_{0,n} \in C(\overline{\Omega})$ such that
		\begin{equation*}
			\int_{\Omega} \left[\frac{1}{2} \frac{|\textup{\textbf{m}}_0|^2}{\varrho_{0,n}} +P(\varrho_{0,n})\right] \textup{d}x \leq \overline{E},
		\end{equation*}
		where the constant $\overline{E}$ is independent of $n$, there exist
		\begin{align*}
			\varrho=\varrho_{\delta, \varepsilon, n} &\in L^2((0,T); W^{1,2}(\Omega)) \cap C([0,T];L^2(\Omega)), \\
			\textup{\textbf{u}}=\textup{\textbf{u}}_{\delta, \varepsilon, n} &\in C([0,T]; X_n),
		\end{align*}
		such that
		\begin{itemize}
			\item[(i)] the integral identity
			\begin{equation*}
			\left[\int_{\Omega} \varrho \varphi (t, \cdot) \ \textup{d}x\right]_{t=0}^{t=\tau} = \int_{0}^{\tau} \int_{\Omega} (\varrho \partial_t \varphi +\varrho \textup{\textbf{u}} \cdot \nabla_x \varphi -\varepsilon \nabla_x \varrho \cdot \nabla_x \varphi ) \ \textup{d} x
			\end{equation*}
			holds for any $\tau \in [0,T]$ and any $\varphi \in C^1([0,T]\times \overline{\Omega})$, with $\varrho(0, \cdot)=\varrho_{0,n}$;
			\item[(ii)] the integral identity
			\begin{equation*}
			\begin{aligned}
			\left[\int_{\Omega}\varrho \textup{\textbf{u}} \cdot \bm{\varphi}(t, \cdot)\ \textup{d}x\right]_{t=0}^{t=\tau}  &= \int_{0}^{\tau}\int_{\Omega} \left[ \varrho\textup{\textbf{u}} \cdot \partial_t\bm{\varphi}+(\varrho \textup{\textbf{u}} \otimes \textup{\textbf{u}}) : \nabla_x \bm{\varphi} +a\varrho\divv_x \bm{\varphi} \right] \textup{d}x\textup{d}t \\
			&-\int_{0}^{\tau}\int_{\Omega}\partial F_{\delta} (\mathbb{D}_x \textup{\textbf{u}}): \nabla_x \bm{\varphi} \ \textup{d}x\textup{d}t -\varepsilon \int_{0}^{\tau}\int_{\Omega} \nabla_x \varrho\cdot \nabla_x \textup{\textbf{u}} \cdot \bm{\varphi} \ \textup{d}x\textup{d}t
			\end{aligned}
			\end{equation*} 
			holds for any $\tau \in [0,T]$ and any $\bm{\varphi} \in C^1([0,T]; X_n)$, with $(\varrho \textup{\textbf{u}})(0, \cdot)= \textup{\textbf{m}}_0$;
			\item[(iii)] the integral equality
			\begin{align*}
				\int_{\Omega}\left[ \frac{1}{2} \varrho |\textbf{u}|^2 +P(\varrho)\right](\tau, \cdot) \ \textup{d}x &+ \int_{0}^{\tau} \int_{\Omega}  \partial F_{\delta} (\mathbb{D}_x \textbf{u}): \nabla_x \textbf{u} \  \textup{d}x \textup{d}t + \varepsilon \int_{0}^{\tau} \int_{\Omega} P''(\varrho) |\nabla_x \varrho|^2 \textup{d}x \textup{d}t \\
				&= \int_{\Omega} \left[\frac{1}{2} \frac{|\textbf{m}_0|^2}{\varrho_{0,n}}+ P(\varrho_{0,n})\right]\textup{d}x
			\end{align*}
			holds for any time $\tau \in [0,T]$.
		\end{itemize}
	\end{lemma}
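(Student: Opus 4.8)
The statement simply collects the constructions carried out in this subsection, so the plan is to assemble them in the order in which they were established. First I would fix $\delta>0$, $\varepsilon>0$, $n\in\mathbb{N}$ and record the fixed point construction: given $\textbf{u}$ in the ball $\mathcal{B}(0,\underline{n}K)\subset C([0,T(n)];X_n)$, Lemma \ref{existence approximated densities} produces the unique density $\varrho=\varrho[\textbf{u}]$ together with the two-sided bounds \eqref{bound from below and above of density}, so that $\mathscr{M}[\varrho(\tau,\cdot)]$ is invertible for every $\tau$ and the map $\mathscr{F}$ is well defined. Using the bounds \eqref{norm operator M}, \eqref{difference inverse operator M} on $\mathscr{M}^{-1}$ and the Lipschitz estimate \eqref{difference two solution approximates densities} for $\varrho[\cdot]$, one checks that $\mathscr{F}$ maps the ball into itself and is a contraction provided $K$ is chosen large and $T(n)$ small; the Banach--Caccioppoli theorem then yields a unique fixed point $\textbf{u}\in C([0,T(n)];X_n)$, which by construction satisfies (i) through $\varrho=\varrho[\textbf{u}]$ and solves the Galerkin identity \eqref{projection momentum equation to finite-dimesional space}, equivalently (ii) after integrating by parts in time (legitimate since the fixed point is of class $C^1$ in time).

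Next I would derive the energy equality (iii). Since $\textbf{u}$ is continuously differentiable in time, \eqref{projection momentum equation to finite-dimesional space} upgrades to the differential identity \eqref{projection velocities 2}. Choosing $\bm{\psi}=\textbf{u}(t,\cdot)\in X_n$ and using the continuity equation \eqref{approximation continuity equation} to rewrite the convective and pressure contributions --- in particular the identity $a\varrho\divv_x\textbf{u}=-\partial_t P(\varrho)-\divv_x(P(\varrho)\textbf{u})+\varepsilon a(\log\varrho+1)\Delta_x\varrho$ with $P(\varrho)=a\varrho\log\varrho$ --- together with the Neumann condition \eqref{Neumann boundary condition} to integrate by parts the two $\Delta_x\varrho$ terms, one arrives at \eqref{first approximate energy equality}; here the term $\frac{\varepsilon}{2}\int_\Omega|\textbf{u}|^2\Delta_x\varrho$ cancels exactly against the extra term $\varepsilon\nabla_x\varrho\cdot\nabla_x\textbf{u}$, which is the reason \eqref{approximation momentum equation} carries it. All quantities being continuous in time, integration over $(0,\tau)$ gives \eqref{second approximate energy equality}, which is precisely (iii).

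It then remains to extend the solution from $[0,T(n)]$ to $[0,T]$. Assuming the initial energy bounded as in \eqref{initial energies independent of n} by a constant $\overline{E}$ independent of $n$, the right-hand side of \eqref{second approximate energy equality} is bounded uniformly, whence $\|F_\delta(\mathbb{D}_x\textbf{u})\|_{L^1((0,T(n))\times\Omega)}\le c(\overline{E})$; by \eqref{relation F delta trace-less part} the trace-free part of $\mathbb{D}_x\textbf{u}$ is bounded in $L^q$, and the trace-free Korn inequality together with Poincar\'e bound $\textbf{u}$ in $L^q(0,T(n);W_0^{1,q}(\Omega;\mathbb{R}^d))$, hence, by equivalence of norms on $X_n$, in $L^q(0,T(n);W^{1,\infty}(\Omega;\mathbb{R}^d))$, by a constant independent of $T(n)\le T$. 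Then \eqref{bound above density}--\eqref{bound below density} bound $\varrho$ from above and below uniformly in $T(n)\le T$, and the kinetic energy term in \eqref{second approximate energy equality} combined with the lower bound on $\varrho$ bounds $\textbf{u}(t,\cdot)$ in $L^2(\Omega;\mathbb{R}^d)$, hence in $X_n$, for every $t$ independently of $T(n)\le T$. Since the step length $T(n)$ delivered by the fixed point argument depends only on these uniform quantities, one restarts the construction from time $T(n)$ with data $[\varrho(T(n),\cdot),(\varrho\textbf{u})(T(n),\cdot)]$ and iterates finitely many times to reach $T$. The only genuinely delicate point is precisely this uniformity: one must verify that every a priori bound --- and therefore the admissible step size --- is independent of the current time origin, so that the iteration terminates after finitely many steps; the rest is bookkeeping of the estimates already assembled above.
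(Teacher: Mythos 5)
Your proposal is correct and follows essentially the same route as the paper: existence of $\varrho[\textbf{u}]$ via Lemma \ref{existence approximated densities}, local solvability on $[0,T(n)]$ by the Banach--Caccioppoli fixed point for $\mathscr{F}$, the energy equality obtained by testing \eqref{projection velocities 2} with $\bm{\psi}=\textbf{u}$ together with the pressure-potential identity and the cancellation of the $\frac{\varepsilon}{2}\int_\Omega|\textbf{u}|^2\Delta_x\varrho$ term, and finally the $T(n)$-independent bounds (via \eqref{relation F delta trace-less part}, the trace-free Korn inequality, Poincar\'e, equivalence of norms on $X_n$, and the two-sided density bounds) that let one iterate to reach $T$. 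You correctly flag the one genuinely delicate point --- uniformity of the a priori estimates in the time origin --- which is exactly what the paper's Section on estimates independent of time is designed to supply.
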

	
	\subsection{Limit $\delta \rightarrow 0$}
	Let now $\varepsilon>0$ and $n \in \mathbb{N}$ be fixed, and let $\{ \varrho_{\delta}, \textbf{u}_{\delta} \}_{\delta>0}$ be the family of weak solutions to problem \eqref{approximation continuity equation}--\eqref{relation viscous stress with F delta} as in Lemma \ref{existence in delta epsilon n}. Proceeding as before, we can deduce that
	\begin{equation*}
	\{\textbf{u}_{\delta}\}_{\delta>0} \mbox{ is unifrmly bounded in } L^q(0,T; W_0^{1,q}(\Omega; \mathbb{R}^d)).
	\end{equation*}
	As $n$ is fixed and all norms are equivalent on the finite-dimensional space $X_n$, we get that
	\begin{equation*}
	\{ \nabla_x \textbf{u}_{\delta} \}_{\delta>0} \mbox{ is unifrmly bounded in }  L^{\infty}((0,T)\times \Omega; \mathbb{R}^{d\times d}),
	\end{equation*}
	and therefore, we are ready to perform the limit $\delta \rightarrow 0$. Accordingly, we obtain the following result.
	\begin{lemma} \label{limit delto to zero}
		For every fixed $\varepsilon>0$, $n\in \mathbb{N}$, and any $\varrho_{0,n} \in C(\overline{\Omega})$ such that
		\begin{equation*}
		\int_{\Omega} \left[\frac{1}{2} \frac{|\textup{\textbf{m}}_0|^2}{\varrho_{0,n}} +P(\varrho_{0,n})\right] \textup{d}x \leq \overline{E},
		\end{equation*}
		where the constant $\overline{E}$ is independent of $n$, there exist
		\begin{align*}
			\varrho&=\varrho_{\varepsilon, n} \in L^2((0,T); W^{1,2}(\Omega)) \cap C([0,T];L^2(\Omega)), \\
		\textup{\textbf{u}}&=\textup{\textbf{u}}_{\varepsilon, n} \in C([0,T]; X_n),
		\end{align*}
		such that
		\begin{itemize}
			\item[(i)] the integral identity
			\begin{equation} \label{weak formulation continuity equation epsilon}
			\left[\int_{\Omega} \varrho \varphi (t, \cdot) \ \textup{d}x\right]_{t=0}^{t=\tau} = \int_{0}^{\tau} \int_{\Omega} (\varrho \partial_t \varphi +\varrho \textup{\textbf{u}} \cdot \nabla_x \varphi -\varepsilon \nabla_x \varrho \cdot \nabla_x \varphi ) \ \textup{d} x
			\end{equation}
			holds for any $\tau \in [0,T]$ and any $\varphi \in C^1([0,T]\times \overline{\Omega})$, with $\varrho(0, \cdot)=\varrho_{0,n}$;
			\item[(ii)] there exists
			\begin{equation*}
			\mathbb{S}=\mathbb{S}_{\varepsilon,n} \in L^{\infty}((0,T)\times \Omega; \mathbb{R}^{d\times d}_{\textup{sym}})
			\end{equation*}
			such that the integral identity
			\begin{equation} \label{weak formulation momentum equation epsilon}
			\begin{aligned}
			\left[\int_{\Omega}\varrho \textup{\textbf{u}} \cdot \bm{\varphi}(t, \cdot)\ \textup{d}x\right]_{t=0}^{t=\tau}  &= \int_{0}^{\tau}\int_{\Omega} \left[ \varrho\textup{\textbf{u}} \cdot \partial_t\bm{\varphi}+(\varrho \textup{\textbf{u}} \otimes \textup{\textbf{u}}) : \nabla_x \bm{\varphi} +a\varrho\divv_x \bm{\varphi} \right] \textup{d}x\textup{d}t \\
			&-\int_{0}^{\tau}\int_{\Omega} \mathbb{S}: \nabla_x \bm{\varphi} \ \textup{d}x\textup{d}t -\varepsilon \int_{0}^{\tau}\int_{\Omega} \nabla_x \varrho\cdot \nabla_x \textup{\textbf{u}} \cdot \bm{\varphi} \ \textup{d}x\textup{d}t
			\end{aligned}
			\end{equation} 
			holds for any $\tau \in [0,T]$ and any $\bm{\varphi} \in C^1([0,T]; X_n)$, with $(\varrho \textup{\textbf{u}})(0, \cdot)= \textup{\textbf{m}}_0$;
			\item[(iii)] the integral inequality
			\begin{equation} \label{energy inequality in epsilon and n}
			\begin{aligned}
				\int_{\Omega}\left[ \frac{1}{2} \varrho |\textbf{u}|^2 +P(\varrho)\right](\tau, \cdot) \ \textup{d}x &+ \int_{0}^{\tau} \int_{\Omega}  [F(\mathbb{D}_x \textup{\textbf{u}})+F^*(\mathbb{S})] \  \textup{d}x \textup{d}t + \varepsilon \int_{0}^{\tau} \int_{\Omega} P''(\varrho) |\nabla_x \varrho|^2 \textup{d}x \textup{d}t \\
				&\leq \int_{\Omega} \left[\frac{1}{2} \frac{|\textbf{m}_0|^2}{\varrho_{0,n}}+ P(\varrho_{0,n})\right]\textup{d}x
			\end{aligned}
			\end{equation}
			holds for a.e. $\tau \in (0,T)$. 
		\end{itemize}
	\end{lemma}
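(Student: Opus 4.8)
The plan is to collect the bounds on the family $\{\varrho_\delta,\textbf{u}_\delta\}_{\delta>0}$ that are uniform in $\delta$, extract convergent subsequences, and pass to the limit both in the integral identities of Lemma~\ref{existence in delta epsilon n} and in its energy equality. From the energy equality in Lemma~\ref{existence in delta epsilon n}(iii), the identity $\partial F_\delta(\mathbb{D}_x\textbf{u}_\delta):\nabla_x\textbf{u}_\delta=F_\delta(\mathbb{D}_x\textbf{u}_\delta)+F^*_\delta(\mathbb{S}_\delta)\geq0$ coming from \eqref{relation viscous stress with F delta}, and the bound \eqref{initial energies independent of n}, one first gets $\|F_\delta(\mathbb{D}_x\textbf{u}_\delta)\|_{L^1((0,T)\times\Omega)}\leq c(\overline E)$; then \eqref{relation F delta trace-less part}, the trace-free $L^q$-Korn inequality and Poincar\'e's inequality give a $\delta$-independent bound for $\textbf{u}_\delta$ in $L^q(0,T;W_0^{1,q}(\Omega;\mathbb{R}^d))$, hence, all norms on $X_n$ being equivalent, for $\nabla_x\textbf{u}_\delta$ in $L^\infty((0,T)\times\Omega;\mathbb{R}^{d\times d})$. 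Combined with the lower bound on $\varrho_\delta$ from Lemma~\ref{existence approximated densities}(ii), the kinetic term then yields a uniform bound for $\textbf{u}_\delta$ in $L^\infty(0,T;X_n)$, and solving \eqref{projection momentum equation to finite-dimesional space} for $\tfrac{\textup{d}}{\textup{d}t}(\mathscr{M}[\varrho_\delta]\textbf{u}_\delta)$ (with $\mathscr{M}^{-1}[\varrho_\delta]$ uniformly bounded) gives equicontinuity of $\textbf{u}_\delta$ in $X_n$. Lemma~\ref{existence approximated densities} bounds $\varrho_\delta$ in $L^2(0,T;W^{1,2}(\Omega))\cap C([0,T];L^2(\Omega))$ with $\delta$-independent pointwise upper and lower bounds. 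Finally, since $F$ is convex and finite on $\mathbb{R}^{d\times d}_{\textup{sym}}$ it is locally Lipschitz, so the mollifications $F_\delta$ are Lipschitz, with a $\delta$-independent constant, on the fixed compact set in which $\mathbb{D}_x\textbf{u}_\delta$ takes its values; hence the stresses $\mathbb{S}_\delta:=\partial F_\delta(\mathbb{D}_x\textbf{u}_\delta)$ are bounded in $L^\infty((0,T)\times\Omega;\mathbb{R}^{d\times d}_{\textup{sym}})$ uniformly in $\delta$.

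Next I would pass to the limit $\delta\to0$. By Arzel\`a--Ascoli, along a subsequence $\textbf{u}_\delta\to\textbf{u}$ strongly in $C([0,T];X_n)$, hence $\nabla_x\textbf{u}_\delta\to\nabla_x\textbf{u}$ strongly in $C([0,T]\times\overline\Omega;\mathbb{R}^{d\times d})$; Lemma~\ref{existence approximated densities}(iii) then gives $\varrho_\delta=\varrho[\textbf{u}_\delta]\to\varrho:=\varrho[\textbf{u}]$ strongly in $C([0,T];L^2(\Omega))$, with $\nabla_x\varrho_\delta\rightharpoonup\nabla_x\varrho$ weakly in $L^2((0,T)\times\Omega;\mathbb{R}^d)$, and $\mathbb{S}_\delta\rightharpoonup^{*}\mathbb{S}$ weakly-$*$ in $L^\infty$. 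Every nonlinear term is then a product of a strongly convergent factor by a weakly (or weakly-$*$) convergent one --- $\varrho_\delta\textbf{u}_\delta\otimes\textbf{u}_\delta\to\varrho\textbf{u}\otimes\textbf{u}$, $a\varrho_\delta\to a\varrho$, $\nabla_x\varrho_\delta\cdot\nabla_x\textbf{u}_\delta\cdot\bm{\varphi}\rightharpoonup\nabla_x\varrho\cdot\nabla_x\textbf{u}\cdot\bm{\varphi}$, and $\partial F_\delta(\mathbb{D}_x\textbf{u}_\delta)=\mathbb{S}_\delta\rightharpoonup^{*}\mathbb{S}$ tested against $\nabla_x\bm{\varphi}\in L^1$ --- so Lemma~\ref{existence in delta epsilon n}(i)--(ii) pass to the limit and produce \eqref{weak formulation continuity equation epsilon}--\eqref{weak formulation momentum equation epsilon}, that is, conclusions (i) and (ii) with $\mathbb{S}\in L^\infty$; the regularity class of $\varrho=\varrho[\textbf{u}]$ is the one granted by Lemma~\ref{existence approximated densities}.

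The only subtle point is the identification of $\mathbb{S}$ and of the dissipation term. I would run a Minty-type monotonicity argument: for a simple tensor field $\bm{\Phi}$ and a function $0\leq\psi\in C_c((0,T)\times\Omega)$, integrate the subgradient inequality $F_\delta(\bm{\Phi})\geq F_\delta(\mathbb{D}_x\textbf{u}_\delta)+\mathbb{S}_\delta:(\bm{\Phi}-\mathbb{D}_x\textbf{u}_\delta)$ against $\psi$ and let $\delta\to0$, using that $F_\delta\to F$ locally uniformly (pointwise limits of convex functions on an open set converge locally uniformly), that $\mathbb{D}_x\textbf{u}_\delta\to\mathbb{D}_x\textbf{u}$ strongly and $\mathbb{S}_\delta\rightharpoonup^{*}\mathbb{S}$; this yields $F(\bm{\Phi})\geq F(\mathbb{D}_x\textbf{u})+\mathbb{S}:(\bm{\Phi}-\mathbb{D}_x\textbf{u})$ a.e., hence, by density of simple functions, $\mathbb{S}\in\partial F(\mathbb{D}_x\textbf{u})$, i.e. $\mathbb{S}:\mathbb{D}_x\textbf{u}=F(\mathbb{D}_x\textbf{u})+F^*(\mathbb{S})$ a.e. in $(0,T)\times\Omega$. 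In the energy equality of Lemma~\ref{existence in delta epsilon n}(iii) I then use \eqref{relation viscous stress with F delta} to rewrite the dissipation as $F_\delta(\mathbb{D}_x\textbf{u}_\delta)+F^*_\delta(\mathbb{S}_\delta)$ and observe that $\int_0^\tau\!\!\int_\Omega F^*_\delta(\mathbb{S}_\delta)\,\textup{d}x\textup{d}t=\int_0^\tau\!\!\int_\Omega[\mathbb{S}_\delta:\mathbb{D}_x\textbf{u}_\delta-F_\delta(\mathbb{D}_x\textbf{u}_\delta)]\,\textup{d}x\textup{d}t$ converges, by weak-$*$-times-strong convergence and the uniform convergence $F_\delta(\mathbb{D}_x\textbf{u}_\delta)\to F(\mathbb{D}_x\textbf{u})$, to $\int_0^\tau\!\!\int_\Omega[\mathbb{S}:\mathbb{D}_x\textbf{u}-F(\mathbb{D}_x\textbf{u})]\,\textup{d}x\textup{d}t=\int_0^\tau\!\!\int_\Omega F^*(\mathbb{S})\,\textup{d}x\textup{d}t$. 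The kinetic and pressure-potential terms converge by the strong convergence of $\varrho_\delta$ and $\textbf{u}_\delta$ (with $\varrho_\delta$ bounded in $L^\infty$), the right-hand side is $\delta$-independent, and the artificial-viscosity term $\varepsilon\int_0^\tau\!\!\int_\Omega P''(\varrho_\delta)|\nabla_x\varrho_\delta|^2$ is handled by weak lower semicontinuity: writing it as $\varepsilon\|\sqrt{P''(\varrho_\delta)}\,\nabla_x\varrho_\delta\|_{L^2}^2$ with $\sqrt{P''(\varrho_\delta)}=\sqrt{a/\varrho_\delta}\to\sqrt{a/\varrho}$ strongly and $\nabla_x\varrho_\delta\rightharpoonup\nabla_x\varrho$ weakly in $L^2$, one gets $\varepsilon\int_0^\tau\!\!\int_\Omega P''(\varrho)|\nabla_x\varrho|^2\leq\liminf_{\delta\to0}\varepsilon\int_0^\tau\!\!\int_\Omega P''(\varrho_\delta)|\nabla_x\varrho_\delta|^2$. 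Passing everything to the limit turns the equality into the inequality \eqref{energy inequality in epsilon and n}, which is (iii).

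The main obstacle is precisely this last identification: since $F$, and hence $F^*$, is in general neither differentiable nor coercive from below, one cannot pass to the limit in the term $F^*_\delta(\mathbb{S}_\delta)$ by a naive semicontinuity argument. The Minty argument above circumvents this, and it works at this level only because $n$ is kept fixed, so that $\mathbb{D}_x\textbf{u}_\delta$ converges \emph{strongly} (indeed in $L^\infty$); everything else is a routine weak-$*$/strong limit passage. One should also verify the two elementary facts used above --- that $F_\delta\to F$ locally uniformly and that $\nabla F_\delta$ is uniformly bounded on compact sets --- both of which are immediate consequences of $F$ being a finite convex, hence locally Lipschitz, function.
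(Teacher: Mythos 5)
Your argument is correct and fills in, with the right tools, a limit passage that the paper leaves almost entirely implicit: the text establishes only the $\delta$-uniform bound on $\textbf{u}_\delta$ in $L^q(0,T;W^{1,q}_0)$ and hence on $\nabla_x\textbf{u}_\delta$ in $L^\infty$, then declares ``we are ready to perform the limit $\delta\to0$'' and states the lemma.

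Your decomposition is the natural one, and the heart of it --- the Minty monotonicity argument to identify the weak-$*$ limit $\mathbb{S}$ as an element of $\partial F(\mathbb{D}_x\textbf{u})$ --- is exactly the point that cannot be avoided, since $F^*_\delta\neq F^*$ and one cannot argue by semicontinuity directly on $F^*_\delta(\mathbb{S}_\delta)$. Two ingredients you use implicitly deserve a word but are both sound: (a) $F$ finite convex $\Rightarrow$ locally Lipschitz, so $\nabla F_\delta=\xi_\delta * \nabla F$ is bounded uniformly in $\delta$ on the fixed compact set containing the range of $\mathbb{D}_x\textbf{u}_\delta$, giving the $L^\infty$ bound on $\mathbb{S}_\delta$; (b) pointwise convergence of finite convex functions is locally uniform, combined with $\inf(\xi_\delta * F)\to F(0)=0$, gives $F_\delta\to F$ locally uniformly. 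The passage in the dissipation term --- rewriting $F_\delta(\mathbb{D}_x\textbf{u}_\delta)+F^*_\delta(\mathbb{S}_\delta)=\mathbb{S}_\delta:\mathbb{D}_x\textbf{u}_\delta$, passing by strong-times-weak-$*$ convergence, and re-splitting via the Fenchel identity granted by Minty --- in fact gives an \emph{equality} in the limit for that term; as you note, the only genuine $\leq$ comes from the weak lower semicontinuity of the artificial-viscosity integral, which matches the statement. The equicontinuity needed for Arzel\`a--Ascoli is a bit more delicate than ``solving for $\tfrac{\textup{d}}{\textup{d}t}(\mathscr{M}[\varrho_\delta]\textbf{u}_\delta)$'' suggests, since one must also control $\tfrac{\textup{d}}{\textup{d}t}\mathscr{M}^{-1}[\varrho_\delta]$ via $\partial_t\varrho_\delta$, but the bounds for this are available from the continuity equation and the finite-dimensionality of $X_n$, so the step is correct.
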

	
	\subsection{Limit $\varepsilon \rightarrow 0$}
	
	In order to perform the limit $\varepsilon \rightarrow 0$, we need the following result.
	
	\begin{lemma}
		Let $n\in \mathbb{N}$ be fixed and let $\{ \varrho_{\varepsilon}, \textup{\textbf{u}}_{\varepsilon}, \mathbb{S}_{\varepsilon} \}_{\varepsilon>0}$ be as in Lemma \ref{limit delto to zero}. Moreover, let
		\begin{align*}
		\textbf{f}(\varrho_{\varepsilon})&:= \sqrt{\varepsilon}\  \nabla_x \varrho_{\varepsilon} \\
		\textbf{g}(\varrho_{\varepsilon}, \textup{\textbf{u}}_{\varepsilon})&:= \sqrt{\varepsilon}\  \nabla_x \varrho_{\varepsilon} \cdot \nabla_x  \textup{\textbf{u}}_{\varepsilon}.
		\end{align*}
		Then, passing to a suitable subsequences as the case may be, the following convergences hold as $\varepsilon \rightarrow 0$.
		\begin{align}
		\varrho_{\varepsilon} \overset{*}{\rightharpoonup} \varrho \quad &\mbox{in } L^{\infty}((0,T)\times \Omega), \label{convergence densities epsilon}\\
		\textup{\textbf{u}}_{\varepsilon} \overset{*}{\rightharpoonup} \textup{\textbf{u}} \quad &\mbox{in } L^{\infty}(0,T; W^{1,\infty}(\Omega; \mathbb{R}^d)), \label{convergence velocities epsilon} \\
		\varrho_{\varepsilon}\textup{\textbf{u}}_{\varepsilon} \overset{*}{\rightharpoonup} \varrho \textup{\textbf{u}}\quad &\mbox{in } L^{\infty}((0,T)\times \Omega; \mathbb{R}^d), \label{convergence momenta epsilon} \\
		\varrho_{\varepsilon}\textup{\textbf{u}}_{\varepsilon} \otimes \textup{\textbf{u}}_{\varepsilon} \overset{*}{\rightharpoonup} \varrho \textup{\textbf{u}} \otimes \textup{\textbf{u}} \quad &\mbox{in } L^{\infty}((0,T)\times \Omega; \mathbb{R}^{d\times d}), \label{convergence convective terms epsilon}\\
		\mathbb{S}_{\varepsilon} \rightharpoonup \mathbb{S} \quad &\mbox{in } L^1((0,T) \times \Omega; \mathbb{R}^{d\times d}), \label{convergence viscous stress tensor epsilon}\\ 
		\textbf{f}(\varrho_{\varepsilon})\rightharpoonup \overline{\textbf{f}(\varrho)} \quad &\mbox{in } L^2((0,T) \times \Omega; \mathbb{R}^d), \label{convergence f epsilon}\\
		\textbf{g}(\varrho_{\varepsilon}, \textup{\textbf{u}}_{\varepsilon}) \rightharpoonup \overline{\textbf{g}(\varrho, \textup{\textbf{u}})} \quad &\mbox{in } L^2((0,T) \times \Omega; \mathbb{R}^d). \label{convergence g epsilon}
		\end{align}
	\end{lemma}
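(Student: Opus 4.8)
The plan is to run the by-now classical weak-compactness scheme at the fixed Galerkin level $n$, using throughout that $\textbf{u}_\varepsilon$ lies in the finite-dimensional space $X_n$; the only genuinely non-routine point is the \emph{strong} convergence of $\textbf{u}_\varepsilon$, which is what lets one identify the limits of the nonlinear terms $\varrho_\varepsilon\textbf{u}_\varepsilon$ and $\varrho_\varepsilon\textbf{u}_\varepsilon\otimes\textbf{u}_\varepsilon$.

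First I would collect the $\varepsilon$-independent bounds. Arguing exactly as in the derivation of the a priori estimates of the previous subsections, the energy inequality \eqref{energy inequality in epsilon and n} (whose right-hand side is $\le\overline{E}$ by \eqref{initial energies independent of n}), the trace-free Korn inequality \cite{BreCiaDie} and the Poincar\'{e} inequality bound $\textbf{u}_\varepsilon$ in $L^q(0,T;W^{1,q}_0(\Omega;\mathbb{R}^d))$ uniformly in $\varepsilon$; since $n$ is fixed and all norms on $X_n$ are equivalent, together with \eqref{bound above density}--\eqref{bound below density} and the energy bound on $\int_\Omega\varrho_\varepsilon|\textbf{u}_\varepsilon|^2$ this upgrades to $\varepsilon$-uniform bounds $\|\textbf{u}_\varepsilon\|_{L^\infty(0,T;W^{1,\infty}(\Omega;\mathbb{R}^d))}\le c(n)$ and $0<\underline{\varrho}(n)\le\varrho_\varepsilon\le\overline{\varrho}(n)$ a.e. Since then $P''(\varrho_\varepsilon)=a/\varrho_\varepsilon\ge a/\overline{\varrho}(n)$, \eqref{energy inequality in epsilon and n} also controls $\textbf{f}(\varrho_\varepsilon)=\sqrt{\varepsilon}\,\nabla_x\varrho_\varepsilon$ in $L^2((0,T)\times\Omega;\mathbb{R}^d)$, and consequently $\textbf{g}(\varrho_\varepsilon,\textbf{u}_\varepsilon)=\sqrt{\varepsilon}\,\nabla_x\varrho_\varepsilon\cdot\nabla_x\textbf{u}_\varepsilon$ in the same space, because $\nabla_x\textbf{u}_\varepsilon$ is bounded in $L^\infty$; finally $\|F^*(\mathbb{S}_\varepsilon)\|_{L^1((0,T)\times\Omega)}\le c$. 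These bounds already give, along suitable subsequences, the convergences \eqref{convergence densities epsilon}, \eqref{convergence velocities epsilon}, \eqref{convergence f epsilon}, \eqref{convergence g epsilon}, the lower and upper bounds being inherited by $\varrho$; and \eqref{convergence viscous stress tensor epsilon} follows because, $F^*$ being superlinear by \eqref{conditions on F*}, the $L^1$-bound on $F^*(\mathbb{S}_\varepsilon)$ and the de la Vall\'{e}e--Poussin criterion (Theorem~\ref{De la Valee Poussin criterion}) make the family $\{\mathbb{S}_\varepsilon\}_\varepsilon$ equi-integrable on $(0,T)\times\Omega$, so that the Dunford--Pettis theorem yields a weakly convergent subsequence in $L^1$.

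For the strong convergence of $\textbf{u}_\varepsilon$ I would write $\textbf{u}_\varepsilon(t)=\sum_{i=1}^n c^\varepsilon_i(t)\textbf{w}_i$ with $c^\varepsilon_i\in C([0,T])$ and set $M^\varepsilon_{ij}(t):=\int_\Omega\varrho_\varepsilon(t)\,\textbf{w}_i\cdot\textbf{w}_j\,\textup{d}x$ and $G^\varepsilon_i(t):=\int_\Omega\varrho_\varepsilon(t)\,\textbf{u}_\varepsilon(t)\cdot\textbf{w}_i\,\textup{d}x=\sum_j M^\varepsilon_{ij}(t)c^\varepsilon_j(t)$. Testing \eqref{weak formulation momentum equation epsilon} with the time-independent field $\textbf{w}_i$ shows that $G^\varepsilon_i\in W^{1,1}(0,T)$ with
\[
\frac{\textup{d}}{\textup{d}t}G^\varepsilon_i=\int_\Omega\big[\varrho_\varepsilon\textbf{u}_\varepsilon\otimes\textbf{u}_\varepsilon:\nabla_x\textbf{w}_i+a\varrho_\varepsilon\divv_x\textbf{w}_i-\mathbb{S}_\varepsilon:\nabla_x\textbf{w}_i\big]\,\textup{d}x-\sqrt{\varepsilon}\int_\Omega\textbf{g}(\varrho_\varepsilon,\textbf{u}_\varepsilon)\cdot\textbf{w}_i\,\textup{d}x ;
\]
here the first two terms are bounded in $L^\infty(0,T)$, the third is equi-integrable in $L^1(0,T)$ precisely because $\{\mathbb{S}_\varepsilon\}_\varepsilon$ is equi-integrable on $(0,T)\times\Omega$, and the last tends to $0$ in $L^1(0,T)$ by the $L^2$-bound on $\textbf{g}(\varrho_\varepsilon,\textbf{u}_\varepsilon)$. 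Hence $\{G^\varepsilon_i\}_\varepsilon$ is bounded and uniformly equicontinuous on $[0,T]$, and Arzel\`{a}--Ascoli gives $G^\varepsilon_i\to G_i$ in $C([0,T])$ along a subsequence; the same argument applied to \eqref{weak formulation continuity equation epsilon} tested with $\textbf{w}_i\cdot\textbf{w}_j\in C^\infty_c(\Omega)$ (the $\sqrt{\varepsilon}$-term now being paired with $\textbf{f}(\varrho_\varepsilon)$) gives $M^\varepsilon_{ij}\to M_{ij}$ in $C([0,T])$. Because $\varrho_\varepsilon\ge\underline{\varrho}(n)>0$ and the $\textbf{w}_i$ are $L^2$-orthonormal, the symmetric matrices $M^\varepsilon(t)$ have smallest eigenvalue $\ge\underline{\varrho}(n)$ uniformly in $t$ and $\varepsilon$; the same holds for the limit matrix $M(t)$, so $(M^\varepsilon)^{-1}\to M^{-1}$ in $C([0,T])$ and therefore $c^\varepsilon=(M^\varepsilon)^{-1}G^\varepsilon\to M^{-1}G=:c$ in $C([0,T])$. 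Thus $\textbf{u}_\varepsilon\to\textbf{u}:=\sum_i c_i\textbf{w}_i$ in $C([0,T];X_n)$, hence by equivalence of norms $\textbf{u}_\varepsilon\to\textbf{u}$ and $\nabla_x\textbf{u}_\varepsilon\to\nabla_x\textbf{u}$ strongly in $L^\infty((0,T)\times\Omega)$, and this $\textbf{u}$ is the weak-$*$ limit in \eqref{convergence velocities epsilon}. Finally \eqref{convergence momenta epsilon} and \eqref{convergence convective terms epsilon} follow by writing $\varrho_\varepsilon\textbf{u}_\varepsilon=\varrho_\varepsilon(\textbf{u}_\varepsilon-\textbf{u})+\varrho_\varepsilon\textbf{u}$ and $\varrho_\varepsilon\textbf{u}_\varepsilon\otimes\textbf{u}_\varepsilon=\varrho_\varepsilon(\textbf{u}_\varepsilon\otimes\textbf{u}_\varepsilon-\textbf{u}\otimes\textbf{u})+\varrho_\varepsilon(\textbf{u}\otimes\textbf{u})$: the first summands go to $0$ in $L^\infty$ (an $L^\infty$-bounded sequence times one converging uniformly to $0$), while the second converge weakly-$*$ in $L^\infty$ by \eqref{convergence densities epsilon} and $\textbf{u},\textbf{u}\otimes\textbf{u}\in L^\infty$.

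The step I expect to be the main obstacle is the one just carried out: an $L^1$-bound on $\mathbb{S}_\varepsilon$ alone would only give $G^\varepsilon_i$ bounded in $W^{1,1}(0,T)$, which does not embed compactly into $C([0,T])$, so the uniform equicontinuity --- and with it the passage to the limit in the convective term --- really rests on the equi-integrability of $\mathbb{S}_\varepsilon$ supplied by the superlinearity of $F^*$ through Theorem~\ref{De la Valee Poussin criterion}. The helpful structural feature is that both artificial-viscosity contributions carry the factor $\sqrt{\varepsilon}$ and are naturally absorbed by the $L^2$-controlled quantities $\textbf{f}(\varrho_\varepsilon)$ and $\textbf{g}(\varrho_\varepsilon,\textbf{u}_\varepsilon)$, so they cause no difficulty in this limit.
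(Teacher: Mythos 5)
Your argument is correct, but it takes a genuinely different route from the paper's. The paper does not prove strong convergence of $\textbf{u}_\varepsilon$: it instead strengthens the weak-$*$ convergence of $\varrho_\varepsilon$ to convergence in $C_{\textup{weak}}([0,T];L^p(\Omega))$, uses the compact Sobolev embedding $L^p(\Omega)\hookrightarrow\hookrightarrow W^{-1,1}(\Omega)$ to get $\varrho_\varepsilon\to\varrho$ strongly in $C([0,T];W^{-1,1}(\Omega))$, and then pairs this strong convergence against the weak-$*$ convergence of $\textbf{u}_\varepsilon$ in $L^\infty(0,T;W^{1,\infty}(\Omega;\mathbb{R}^d))$ to identify $\textbf{m}=\varrho\textbf{u}$; the convective term \eqref{convergence convective terms epsilon} is handled ``similarly'' by the analogous argument for the momentum $\varrho_\varepsilon\textbf{u}_\varepsilon$. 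You instead establish the stronger statement $\textbf{u}_\varepsilon\to\textbf{u}$ in $C([0,T];X_n)$ by an Arzel\`{a}--Ascoli argument on the Galerkin coefficient functions $c^\varepsilon=(M^\varepsilon)^{-1}G^\varepsilon$, and then dispose of the nonlinear terms by the elementary decomposition ``$L^\infty$-bounded $\times$ strongly-convergent $+$ weak-$*$ $\times$ fixed.'' Both approaches hinge, in the end, on the equi-integrability of $\{\mathbb{S}_\varepsilon\}$ coming from the superlinearity of $F^*$ and Theorem~\ref{De la Valee Poussin criterion}: in the paper's version it is needed to obtain time-equicontinuity of $\varrho_\varepsilon\textbf{u}_\varepsilon$ in $W^{-1,1}$, in yours to obtain equicontinuity of the scalar functions $G_i^\varepsilon$. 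What your version buys is a cleaner and more explicit proof of \eqref{convergence momenta epsilon}--\eqref{convergence convective terms epsilon} at the price of proving a stronger compactness property for the velocities than the lemma actually asserts, exploiting more heavily the finite-dimensionality of $X_n$ (in particular the uniform positive-definiteness of the mass matrices $M^\varepsilon(t)$, which indeed follows from the $\varepsilon$-uniform lower bound $\varrho_\varepsilon\geq\underline{\varrho}(n)>0$ and the $L^2$-orthonormality of the basis $\textbf{w}_i$). The paper's argument is lighter and closer in spirit to what is later redone at the level $n\to\infty$, where no finite-dimensional structure is available.

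One minor point worth stating explicitly if you write this up: when you test \eqref{weak formulation continuity equation epsilon} with $\textbf{w}_i\cdot\textbf{w}_j$ to get equicontinuity of $M^\varepsilon_{ij}$, the first term $\int_\Omega\varrho_\varepsilon\textbf{u}_\varepsilon\cdot\nabla_x(\textbf{w}_i\cdot\textbf{w}_j)\,\textup{d}x$ is $L^\infty(0,T)$-bounded by the $\varepsilon$-uniform $L^\infty$ bounds on $\varrho_\varepsilon$ and $\textbf{u}_\varepsilon$, so no equi-integrability issue arises there; it is only the momentum equation (through $\mathbb{S}_\varepsilon$) where the Dunford--Pettis input is essential, exactly as you identify.
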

	
	\begin{proof}
		From \eqref{energy inequality in epsilon and n} it is easy to deduce the following uniform bounds
		\begin{align}
			\| F (\mathbb{D}_x \textbf{u}_{\varepsilon}) \|_{L^1((0,\infty)\times \Omega)} &\leq c(\overline{E}), \label{estimate F} \\
			\| F^* (\mathbb{S}_{\varepsilon}) \|_{L^1((0,\infty)\times \Omega)} &\leq c(\overline{E}). \label{estimate F*}
		\end{align}
		Similarly to the previous section, from \eqref{estimate F} we obtain
		\begin{equation*}
			\|\textbf{u}_{\varepsilon}\|_{L^q(0,T; W^{1,q}(\Omega; \mathbb{R}^d))}\leq c_1
		\end{equation*}
		for some $q>1$ and a positive constant $c_1$ independent of $\varepsilon>0$, yielding, in view of Lemmas \ref{existence approximated densities}, conditions (ii) and (iii),
		\begin{equation} \label{bound densities}
			e^{-c_1T} \underline{\varrho} \leq \varrho_{\varepsilon}(t,x) \leq e^{c_1T} \overline{\varrho}, \quad \mbox{for all }(t,x) \in [0,T] \times \overline{\Omega}.
		\end{equation}
		We recover convergence \eqref{convergence densities epsilon}. From the energy inequality \eqref{energy inequality in epsilon and n}, it is easy to deduce
		\begin{equation} \label{bound velocities}
		\sup_{t\in [0,T]} \| \textbf{u}_{\varepsilon}(t,\cdot)\|_{W^{1,\infty}(\Omega; \mathbb{R}^d)} \leq c_2,
		\end{equation}
		from which convergence \eqref{convergence velocities epsilon} follows. Combining \eqref{bound densities} and \eqref{bound velocities}, we can recover
		\begin{equation*}
		\varrho_{\varepsilon}\textbf{u}_{\varepsilon} \overset{*}{\rightharpoonup} \textbf{m} \quad \mbox{in } L^{\infty}((0,T)\times \Omega; \mathbb{R}^d).
		\end{equation*}
		Now, notice that \eqref{convergence densities epsilon} can be strengthened to
		\begin{equation*}
		\varrho_{\varepsilon} \rightarrow \varrho \quad \mbox{in } C_{\textup{weak}} ([0,T]; L^p(\Omega)) \quad \mbox{for all } 1<p<\infty
		\end{equation*}
		as $\varepsilon \rightarrow 0$, so that, relaying on the compact Sobolev embedding
		\begin{equation*}
			L^p(\Omega) \hookrightarrow \hookrightarrow W^{-1,1}(\Omega) \quad \mbox{for all } p\geq 1,
		\end{equation*}
		we obtain
		\begin{equation*}
		\varrho_{\varepsilon} \rightarrow \varrho \quad \mbox{in } C([0,T]; W^{-1,1}(\Omega))
		\end{equation*}
		as $\varepsilon \rightarrow 0$. The last convergence combined with \eqref{convergence velocities epsilon}, implies 
		\begin{equation*}
		\textbf{m}= \varrho \textbf{u}  \quad \mbox{a.e. in }(0,T) \times \Omega,
		\end{equation*}
		and thus, we get \eqref{convergence momenta epsilon}. Similarly, from \eqref{convergence velocities epsilon} and \eqref{convergence momenta epsilon} we can deduce \eqref{convergence convective terms epsilon}. Convergence \eqref{convergence viscous stress tensor epsilon} can be deduced from \eqref{estimate F*} using the superlinearity of $F^*$ \eqref{conditions on F*} combined with the De la Vall\'{e}e--Poussin criterion and the Dunford--Pettis theorem. Finally, from \eqref{bound densities} we have in particular that
		\begin{equation*}
			\frac{e^{c_1 T}\overline{\varrho}}{\varrho(t,x)} \geq 1, \quad \mbox{for all } (t,x) \in [0,T] \times \overline{\Omega},
		\end{equation*}
		and thus, from  the energy inequality \eqref{energy inequality in epsilon and n},
		\begin{equation*}
			\varepsilon \int_{0}^{\tau} \int_{\Omega} |\nabla_x \varrho|^2 \  \textup{d}x \textup{d}t \leq  \varepsilon \ e^{c_1 T}\overline{\varrho} \int_{0}^{\tau} \int_{\Omega} P''(\varrho) |\nabla_x \varrho|^2 \ \textup{d}x \textup{d}t \leq c(\overline{\varrho}, T).
		\end{equation*}
		In this way we get \eqref{convergence f epsilon} and, in view of \eqref{bound velocities}, \eqref{convergence g epsilon}.
	\end{proof}
	
	\begin{remark}
		It is worth noticing that the limit density $\varrho$ admits the same upper and lower bounds as in \eqref{bound densities}:
		\begin{equation*} 
		e^{-c_1T} \underline{\varrho} \leq \varrho(t,x) \leq e^{c_1T} \overline{\varrho}, \quad \mbox{for all }(t,x) \in [0,T] \times \overline{\Omega}.
		\end{equation*}
	\end{remark}
	
	We are now ready to let $\varepsilon \rightarrow 0$ in the weak formulations \eqref{weak formulation continuity equation epsilon}, \eqref{weak formulation momentum equation epsilon}; notice in particular that, in view of \eqref{convergence g epsilon}, for any $\tau \in [0,T]$ and any $\bm{\varphi} \in C^1([0,T]; X_n)$
	\begin{equation*}
	\varepsilon \int_{0}^{\tau}\int_{\Omega} \nabla_x \varrho\cdot \nabla_x \textbf{u} \cdot \bm{\varphi} \ \textup{d}x\textup{d}t = \sqrt{\varepsilon} \int_{0}^{\tau}\int_{\Omega} \sqrt{\varepsilon} \ \nabla_x \varrho\cdot \nabla_x \textbf{u} \cdot \bm{\varphi} \ \textup{d}x\textup{d}t \rightarrow 0
	\end{equation*}
	as $\varepsilon \rightarrow 0$.
	
	\begin{lemma} \label{limit epsilon to zero}
		For every fixed $n\in \mathbb{N}$, and any $\varrho_{0,n} \in C(\overline{\Omega})$ such that
		\begin{equation*}
		\int_{\Omega} \left[\frac{1}{2} \frac{|\textup{\textbf{m}}_0|^2}{\varrho_{0,n}} +P(\varrho_{0,n})\right] \textup{d}x \leq \overline{E},
		\end{equation*}
		where the constant $\overline{E}$ is independent of $n$, there exist
		\begin{align*}
			\varrho&=\varrho_n \in L^{\infty}((0,T)\times \Omega), \\
			\textup{\textbf{u}}&=\textup{\textbf{u}}_n \in C([0,T]; X_n),
		\end{align*}
		with
		\begin{equation*}
			\quad e^{-cT} \underline{\varrho} \leq \varrho(t,x) \leq e^{cT} \overline{\varrho}, \quad \mbox{for all }(t,x) \in [0,T] \times \overline{\Omega},
		\end{equation*}
		for a positive constant $c$, such that
		\begin{itemize}
			\item[(i)] the integral identity
			\begin{equation} \label{weak formulation continuity equation n} 
			\left[\int_{\Omega} \varrho \varphi (t, \cdot) \ \textup{d}x\right]_{t=0}^{t=\tau} = \int_{0}^{\tau} \int_{\Omega} (\varrho \partial_t \varphi +\varrho \textup{\textbf{u}}\cdot \nabla_x \varphi ) \ \textup{d} x
			\end{equation}
			holds for any $\tau \in [0,T]$ and any $\varphi \in C^1([0,T]\times \overline{\Omega})$, with $\varrho(0, \cdot)=\varrho_{0,n}$;
			\item[(ii)] there exists
			\begin{equation*}
			\mathbb{S}=\mathbb{S}_n \in L^1((0,T)\times \Omega; \mathbb{R}^{d\times d}_{\textup{sym}})
			\end{equation*}
			such that the integral identity
			\begin{equation} \label{weak formulation balance of momentum n}
			\begin{aligned}
			\left[\int_{\Omega}\varrho \textup{\textbf{u}} \cdot \bm{\varphi}(t, \cdot)\ \textup{d}x\right]_{t=0}^{t=\tau}  &= \int_{0}^{\tau}\int_{\Omega} \left[ \varrho\textup{\textbf{u}} \cdot \partial_t\bm{\varphi}+(\varrho \textup{\textbf{u}} \otimes \textup{\textbf{u}} ): \nabla_x \bm{\varphi} +a\varrho\divv_x \bm{\varphi} \right] \textup{d}x\textup{d}t \\
			&-\int_{0}^{\tau}\int_{\Omega} \mathbb{S}: \nabla_x \bm{\varphi} \ \textup{d}x\textup{d}t 
			\end{aligned}
			\end{equation} 
			holds for any $\tau \in [0,T]$ and any $\bm{\varphi} \in C^1([0,T]; X_n)$, with $(\varrho \textup{\textbf{u}})(0, \cdot)= \textup{\textbf{m}}_0$;
			\item[(iii)] the integral inequality
			\begin{equation} \label{energy inequality in n}
			\int_{\Omega}\left[ \frac{1}{2} \varrho |\textbf{u}|^2 +P(\varrho)\right](\tau, \cdot) \ \textup{d}x + \int_{0}^{\tau} \int_{\Omega}  [F(\mathbb{D}_x \textup{\textbf{u}})+F^*(\mathbb{S})] \  \textup{d}x \textup{d}t \leq \int_{\Omega} \left[\frac{1}{2} \frac{|\textbf{m}_0|^2}{\varrho_{0,n}}+ P(\varrho_{0,n})\right]\textup{d}x
			\end{equation}
			holds for a.e. $\tau \in (0,T)$.
		\end{itemize}
	\end{lemma}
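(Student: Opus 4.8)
The plan is to pass to the limit $\varepsilon\to 0$ in the weak formulations \eqref{weak formulation continuity equation epsilon}, \eqref{weak formulation momentum equation epsilon} and in the energy inequality \eqref{energy inequality in epsilon and n} of Lemma \ref{limit delto to zero}, using only the convergences \eqref{convergence densities epsilon}--\eqref{convergence g epsilon}. Since $n$ is kept fixed, no compactness beyond the one already at our disposal is required, and the two-sided bound on $\varrho_n$ is inherited from \eqref{bound densities}: the uniform constants there do not depend on $\varepsilon$, so the limit density satisfies the same bounds and one simply relabels the constant as $c$.

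\textbf{Limit in the two equations.} First I would dispose of the artificial-viscosity contributions. Writing $\varepsilon\nabla_x\varrho_\varepsilon\cdot\nabla_x\varphi=\sqrt{\varepsilon}\,\textbf{f}(\varrho_\varepsilon)\cdot\nabla_x\varphi$ and $\varepsilon\,\nabla_x\varrho_\varepsilon\cdot\nabla_x\textbf{u}_\varepsilon\cdot\bm{\varphi}=\sqrt{\varepsilon}\,\textbf{g}(\varrho_\varepsilon,\textbf{u}_\varepsilon)\cdot\bm{\varphi}$, and using that $\textbf{f}(\varrho_\varepsilon)$ and $\textbf{g}(\varrho_\varepsilon,\textbf{u}_\varepsilon)$ are bounded in $L^2((0,T)\times\Omega;\mathbb{R}^d)$ by \eqref{convergence f epsilon}--\eqref{convergence g epsilon}, both integrals are $O(\sqrt{\varepsilon})$ and vanish. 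The remaining terms pass to the limit one by one: $\varrho_\varepsilon\partial_t\varphi$ and $a\varrho_\varepsilon\divv_x\bm{\varphi}$ by \eqref{convergence densities epsilon}; $\varrho_\varepsilon\textbf{u}_\varepsilon\cdot\nabla_x\varphi$ and $\varrho_\varepsilon\textbf{u}_\varepsilon\cdot\partial_t\bm{\varphi}$ by \eqref{convergence momenta epsilon}; $(\varrho_\varepsilon\textbf{u}_\varepsilon\otimes\textbf{u}_\varepsilon):\nabla_x\bm{\varphi}$ by \eqref{convergence convective terms epsilon}; and $\mathbb{S}_\varepsilon:\nabla_x\bm{\varphi}$ by \eqref{convergence viscous stress tensor epsilon}. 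It is precisely here that the linearity of the pressure is used: $\varrho\mapsto a\varrho$ being linear, the pressure term is linear in the weakly-$*$ converging sequence $\varrho_\varepsilon$, so no strong convergence of the density is needed. This yields \eqref{weak formulation continuity equation n} and \eqref{weak formulation balance of momentum n}; attainment of the initial data is preserved because the time primitives are equi-continuous. Finally, that $\textbf{u}=\textbf{u}_n$ may be taken in $C([0,T];X_n)$ is seen by writing $\textbf{u}_n(\tau,\cdot)=\mathscr{M}^{-1}[\varrho_n(\tau,\cdot)]\big(\textbf{m}_0^*+\int_0^\tau\mathscr{N}[\varrho_n,\textbf{u}_n]\,\textup{d}s\big)$ and noting that $\tau\mapsto\mathscr{M}[\varrho_n(\tau,\cdot)]$ is continuous and uniformly invertible in $\mathscr{L}(X_n,X_n^*)$, because $\varrho_n$ is bounded away from $0$ and lies in $C_{\textup{weak}}([0,T];L^p(\Omega))$.

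\textbf{Limit in the energy inequality.} Both sides of \eqref{energy inequality in epsilon and n} are continuous in $\tau$ — the dissipation integral being in addition nondecreasing — so the inequality actually holds for every $\tau\in[0,T]$. I would drop the nonnegative term $\varepsilon\int_0^\tau\int_\Omega P''(\varrho_\varepsilon)|\nabla_x\varrho_\varepsilon|^2$ and take the lower limit on the left, invoking three lower-semicontinuity facts: $\int_0^\tau\int_\Omega F(\mathbb{D}_x\textbf{u})\le\liminf_\varepsilon\int_0^\tau\int_\Omega F(\mathbb{D}_x\textbf{u}_\varepsilon)$ from convexity and lower semicontinuity of $F$ together with $\textbf{u}_\varepsilon\overset{*}{\rightharpoonup}\textbf{u}$; $\int_0^\tau\int_\Omega F^*(\mathbb{S})\le\liminf_\varepsilon\int_0^\tau\int_\Omega F^*(\mathbb{S}_\varepsilon)$ by the classical weak lower semicontinuity of convex integral functionals on $L^1$ (Ioffe), applicable since $F^*$ is convex, lower semicontinuous and nonnegative by \eqref{conditions on F*} and $\mathbb{S}_\varepsilon\rightharpoonup\mathbb{S}$ in $L^1$ by \eqref{convergence viscous stress tensor epsilon}; and $\int_\Omega\big[\tfrac{1}{2}\tfrac{|\textbf{m}|^2}{\varrho}+P(\varrho)\big](\tau,\cdot)\le\liminf_\varepsilon\int_\Omega\big[\tfrac{1}{2}\tfrac{|\textbf{m}_\varepsilon|^2}{\varrho_\varepsilon}+P(\varrho_\varepsilon)\big](\tau,\cdot)$ from the joint convexity and lower semicontinuity of $(\varrho,\textbf{m})\mapsto\tfrac{1}{2}|\textbf{m}|^2/\varrho+P(\varrho)$ and the weak convergence of $\varrho_\varepsilon(\tau,\cdot)$ and $(\varrho_\varepsilon\textbf{u}_\varepsilon)(\tau,\cdot)$ guaranteed by the $C_{\textup{weak}}$ bounds. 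The right-hand side is independent of $\varepsilon$, so \eqref{energy inequality in n} follows.

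\textbf{Expected main obstacle.} The only genuinely delicate point is the lower semicontinuity of the dissipation term $\int F^*(\mathbb{S}_\varepsilon)$: the sequence $\mathbb{S}_\varepsilon$ converges merely weakly in the non-reflexive space $L^1$, so one cannot argue by reflexive weak compactness and must instead rely on the equi-integrability supplied by the De la Vall\'ee--Poussin criterion together with the superlinearity \eqref{conditions on F*} of $F^*$ — exactly the ingredients underlying \eqref{convergence viscous stress tensor epsilon} — in order to legitimately apply the convex lower semicontinuity theorem. Everything else is a routine passage to the weak limit.
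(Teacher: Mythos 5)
Your proposal follows the same route as the paper: pass to the limit $\varepsilon\to 0$ term by term in the weak formulations using the listed convergences, dispose of the artificial-viscosity terms via the $\sqrt{\varepsilon}\cdot\sqrt{\varepsilon}$ splitting against the $L^2$-bounded quantities $\textbf{f}(\varrho_\varepsilon)$, $\textbf{g}(\varrho_\varepsilon,\textbf{u}_\varepsilon)$, and obtain the energy inequality from convex lower semi-continuity of the kinetic/pressure energy and of the integral functionals generated by $F$ and $F^*$ (the latter in $L^1$, using the equi-integrability supplied by the superlinearity of $F^*$). The paper leaves this proof essentially implicit — a short paragraph on the $\sqrt{\varepsilon}$ term before the lemma and a remark on lower semi-continuity afterwards — so your write-up just fills in the same argument; one small imprecision is that the identity $\textbf{u}_n(\tau,\cdot)=\mathscr{M}^{-1}[\varrho_n(\tau,\cdot)]\bigl(\textbf{m}_0^*+\int_0^\tau\mathscr{N}\,\textup{d}s\bigr)$ was stated at the $\delta$-level with $\mathscr{N}$ built from $\partial F_\delta$; to conclude $\textbf{u}_n\in C([0,T];X_n)$ after passing $\delta,\varepsilon\to 0$ one should instead read the analogous representation off \eqref{weak formulation balance of momentum n} itself with $\partial F_\delta(\mathbb{D}_x\textbf{u})$ replaced by $\mathbb{S}_n\in L^1$, but the idea (continuity of $\tau\mapsto\mathscr{M}^{-1}[\varrho_n(\tau,\cdot)]$ via the lower bound on $\varrho_n$, plus absolute continuity of the time integral) goes through unchanged.
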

	
	\begin{remark}
		In the energy inequality \eqref{energy inequality in n} we used the lower semi-continuity of the function
		\begin{equation*}
			[\varrho, \textbf{m}] \mapsto \begin{cases}
				0 &\mbox{if }\textbf{m}=0, \\
				\frac{|\textbf{m}|^2}{\varrho} &\mbox{if } \varrho >0, \\
				\infty &\mbox{otherwise},
			\end{cases}
		\end{equation*}
		 and the weak lower semi-continuity in $L^1$ of the functions $F$ and $F^*$, and thus for a.e. $\tau>0$
		\begin{align*}
			\int_{\Omega}\left[ \frac{1}{2} \varrho |\textbf{u}|^2 +P(\varrho)\right](\tau,\cdot) \ \textup{d}x &\leq \liminf_{\varepsilon \rightarrow \infty} \int_{\Omega}\left[ \frac{1}{2} \varrho_{\varepsilon} |\textbf{u}_{\varepsilon}|^2 +P(\varrho)\right](\tau,\cdot) \ \textup{d}x, \\
			\int_{0}^{\tau} \int_{\Omega} [F(\mathbb{D}_x\textbf{u}) +F^*(\mathbb{S})] \ \textup{d}x\textup{d}t &\leq \liminf_{\varepsilon \rightarrow 0}\int_{0}^{\tau} \int_{\Omega} [F(\mathbb{D}_x\textbf{u}_{\varepsilon}) +F^*(\mathbb{S}_{\varepsilon})] \ \textup{d}x\textup{d}t.
		\end{align*}
	\end{remark}
	
	\subsection{Limit $n\rightarrow \infty$} \label{Limit n}
	
	Let $\{\varrho_n, \textbf{m}_n= \varrho_n \textbf{u}_n\}_{n\in \mathbb{N}}$ be the family of approximate solutions obtained in Lemma \ref{limit epsilon to zero}, with correspondent viscous stress tensor $\mathbb{S}_n$. At this stage,  as the initial energies are uniformly bounded by a constant independent of $n$, we can perform the same procedure done in \cite{Bas}, Section 5.1 with $\gamma=1$, to get the following family of convergences as $n \rightarrow \infty$, passing to suitable subsequences as the case may be:
	\begin{align} \label{convergences n to infinity}
		\varrho_n \rightarrow \varrho \quad &\mbox{in } C_{\textup{weak}}([0,T]; L^1(\Omega)), \\
		\textbf{m}_n \rightarrow \textbf{m} \quad &\mbox{in } C_{\textup{weak}}([0,T]; L^1(\Omega; \mathbb{R}^d)), \\
		\textbf{u}_n \rightharpoonup \textbf{u} \quad &\mbox{in } L^q(0,T; W^{1,q}_0(\Omega; \mathbb{R}^d))\\
		\mathbb{S}_n \rightharpoonup \mathbb{S} \quad &\mbox{in } L^1(0,T; L^1(\Omega; \mathbb{R}^{d\times d})), \\
		\mathbbm{1}_{\varrho_n>0}\frac{\textbf{m}_n \otimes \textbf{m}_n}{\varrho_n} \overset{*}{\rightharpoonup} \overline{\mathbbm{1}_{\varrho>0} \frac{\textbf{m}\otimes \textbf{m}}{\varrho}} \quad &\mbox{in }L^{\infty}(0,T; \mathcal{M}(\overline{\Omega}; \mathbb{R}^{d\times d}_{\textup{sym}})).
	\end{align}
	with 
	\begin{equation*}
	\textbf{m} = \varrho \textbf{u} \quad \mbox{a.e. in } (0,T) \times \Omega,
	\end{equation*}
	as a consequence of Lemma 5.2 in \cite{Bas}. 
	
	We are now ready to let $n\rightarrow \infty$ in the weak formulation of the continuity equation \eqref{weak formulation continuity equation n} and the balance of momentum \eqref{weak formulation balance of momentum n}, obtaining that
	\begin{equation*}
		\left[ \int_{\Omega} \varrho \varphi(t,\cdot) \ \textup{d}x \right]_{t=0}^{t=\tau}= \int_{0}^{\tau} \int_{\Omega} [\varrho \partial_t \varphi + \textbf{m}\cdot \nabla_x \varphi] \ \textup{d}x \textup{d}t
	\end{equation*}
	holds for any $\tau\in [0,T]$ and any $\varphi \in C^1([0,T]\times \overline{\Omega})$, with $\varrho(0,\cdot)=\varrho_0$, and
	\begin{equation} \label{weak formulation momentum equation in finite-dimesional space}
	\begin{aligned}
	\left[ \int_{\Omega} \textbf{m}\cdot \bm{\varphi}(t, \cdot) \ \textup{d}x \right]_{t=0}^{t=\tau} &= \int_{0}^{\tau}\int_{\Omega} \left[ \textbf{m} \cdot \partial_t \bm{\varphi} + \mathbbm{1}_{\varrho>0} \frac{\textbf{m}\otimes \textbf{m}}{\varrho}:\nabla_x \bm{\varphi} +a\varrho\divv_x\bm{\varphi}\right] \textup{d}x\textup{d}t \\
	&-\int_{0}^{\tau} \int_{\Omega} \mathbb{S}: \nabla_x \bm{\varphi} \ \textup{d}x\textup{d}t + \int_{0}^{\tau} \int_{\overline{\Omega}} \nabla_x \bm{\varphi} : \textup{d}\mathfrak{R} \ \textup{d}t
	\end{aligned}
	\end{equation}
	holds for any $\tau \in [0,T]$ and any $\bm{\varphi} \in C^1([0,T]; X_n)$, with $n$ arbitrary. As clearly explained by Abbatiello, Feireisl and Novotn\'{y} \cite{AbbFeiNov}, Section 3.4, by a density argument it is possible to extend the validity of the integral identity \eqref{weak formulation momentum equation in finite-dimesional space} for any $\bm{\varphi} \in C^1([0,T] \times \overline{\Omega})$, $\bm{\varphi}|_{\partial \Omega}=0$. Finally, notice that from the energy inequality \eqref{energy inequality in n} we have the following uniform bounds
	\begin{align*}
		\left\| \frac{\textbf{m}_n}{\sqrt{\varrho_n}} \right \| _{L^{\infty}(0,T; L^2(\Omega; \mathbb{R}^d))} &\leq c(\overline{E}),  \\
		\| P(\varrho_n)\|_{L^{\infty}(0,T; L^1(\Omega))} & \leq c(\overline{E}), 
	\end{align*}
	from which it is possible to deduce that
	\begin{align*}
		\frac{|\textbf{m}_n|^2}{\varrho_n} \overset{*}{\rightharpoonup} \overline{\frac{|\textbf{m}|^2}{\varrho}} \quad &\mbox{in } L^{\infty}(0,\infty; \mathcal{M}(\overline{\Omega})) \\
		P(\varrho_n) \overset{*}{\rightharpoonup} \overline{P(\varrho)} \quad &\mbox{in } L^{\infty}(0,\infty; \mathcal{M}(\overline{\Omega}))
	\end{align*}
	as $n \rightarrow \infty$. Thus,
	\begin{equation*}
		\mathfrak{R} \in L^{\infty}_{\textup{weak}}(0,T; \mathcal{M}^+(\overline{\Omega}; \mathbb{R}^{d\times d}_{\textup{sym}}))
	\end{equation*}
	appearing in \eqref{weak formulation momentum equation in finite-dimesional space} has been chosen in such a way that
	\begin{equation*} 
		\textup{d}\mathfrak{R} =   \left(\overline{\mathbbm{1}_{\varrho>0}\frac{\textbf{m}\otimes \textbf{m}}{\varrho}} -\mathbbm{1}_{\varrho>0}\frac{\textbf{m}\otimes \textbf{m}}{\varrho}\right) \textup{d}x +\psi(t)\mathbb{I},
	\end{equation*}
	where the time-dependent function $\psi$ is chosen in such a way to guarantee 
	\begin{equation*}
		\frac{1}{\lambda} \ \textup{d}  \trace[\mathfrak{R}] = \frac{1}{2}\left( \overline{\frac{|\textbf{m}|^2}{\varrho}} - \frac{|\textbf{m}|^2}{\varrho} \right) \textup{d}x + \left( \overline{P(\varrho)}- P(\varrho) \right) \textup{d}x
	\end{equation*}
	for a.e. $\tau \in (0,T)$; see \cite{Bas}, Section 5.4 for further details.
	
	We proved the following result.
	\begin{theorem} \label{existence dissipative solutions}
		For every fixed initial data 
		\begin{equation*}
			[\varrho_{0}, \textup{\textbf{m}}_0] \in L^1(\Omega) \times L^1(\Omega; \mathbb{R}^d), 
		\end{equation*}
		with 
		\begin{equation} 
		\int_{\Omega} \left[ \frac{1}{2} \frac{|\textup{\textbf{m}}_0|^2}{\varrho_0} + \varrho_{0} \log \varrho_{0}\right] \textup{d}x <\infty,
		\end{equation}
		problem \eqref{continuity equation}--\eqref{initial conditions} admits a dissipative solution in the sense of Definition \ref{dissipative solution}.
	\end{theorem}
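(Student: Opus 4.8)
The plan is to assemble the three-level approximation already carried out --- Lemmas~\ref{existence approximated densities}, \ref{existence in delta epsilon n}, \ref{limit delto to zero} and \ref{limit epsilon to zero} --- together with the limit $n\to\infty$ performed in Section~\ref{Limit n}, the only genuinely new ingredient being the regularisation of the initial density. Since the modified initial energy is finite, one has $\textbf{m}_0=0$ a.e.\ on the degeneracy set $\{\varrho_0=0\}$. First I would construct, by a standard truncation-from-below and mollification procedure, a sequence $\varrho_{0,n}\in C(\overline\Omega)$ with $\varrho_{0,n}>0$ on $\overline\Omega$, $\varrho_{0,n}\to\varrho_0$ in $L^1(\Omega)$, and
\begin{equation*}
	\int_{\Omega}\left[\frac12\frac{|\textbf{m}_0|^2}{\varrho_{0,n}}+P(\varrho_{0,n})\right]\textup{d}x\longrightarrow\int_{\Omega}\left[\frac12\frac{|\textbf{m}_0|^2}{\varrho_0}+P(\varrho_0)\right]\textup{d}x,
\end{equation*}
so that in particular \eqref{initial energies independent of n} holds with an $n$-independent constant $\overline E$. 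Convergence of the energies (not merely their boundedness) is secured by the convexity and superlinearity of $P$ and by the convexity of $[\varrho,\textbf{m}]\mapsto|\textbf{m}|^2/\varrho$, using $\textbf{m}_0=0$ on $\{\varrho_0=0\}$.

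With this initial data, Lemma~\ref{limit epsilon to zero} provides, for every $n$, a triple $[\varrho_n,\textbf{u}_n,\mathbb{S}_n]$ solving the weak continuity equation \eqref{weak formulation continuity equation n}, the Galerkin-projected momentum identity \eqref{weak formulation balance of momentum n} and the energy inequality \eqref{energy inequality in n} whose right-hand side is bounded by $\overline E$ uniformly in $n$. From \eqref{energy inequality in n} I read off the $n$-uniform bounds on $\|\textbf{m}_n/\sqrt{\varrho_n}\|_{L^\infty(0,T;L^2)}$, on $\|P(\varrho_n)\|_{L^\infty(0,T;L^1)}$ (whence equi-integrability of $\{\varrho_n\}$ by superlinearity of $P$), and on $\|F(\mathbb{D}_x\textbf{u}_n)\|_{L^1}+\|F^*(\mathbb{S}_n)\|_{L^1}$; combining the last bound with \eqref{relation F and trsce-less part}, the $L^q$ trace-free Korn inequality (\cite{BreCiaDie}, Theorem~3.1) and the Poincar\'{e} inequality yields $\|\textbf{u}_n\|_{L^q(0,T;W^{1,q}_0)}\le c$, while the superlinearity of $F^*$ from \eqref{conditions on F*}, via the De la Vall\'{e}e--Poussin criterion and the Dunford--Pettis theorem, yields weak $L^1$-precompactness of $\{\mathbb{S}_n\}$.

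Next I would pass to the limit $n\to\infty$ exactly as in Section~\ref{Limit n} (equivalently \cite{Bas}, Section~5.1 with $\gamma=1$): this produces the convergences \eqref{convergences n to infinity}, the identity $\textbf{m}=\varrho\textbf{u}$ with $\textbf{u}\in L^q(0,T;W^{1,q}_0(\Omega;\mathbb{R}^d))$ via \cite{Bas}, Lemma~5.2 --- which gives~(iv) --- the limit continuity equation \eqref{weak formulation continuity equation} and the limit momentum identity \eqref{weak formulation momentum equation in finite-dimesional space}, extended to all admissible test functions by the density argument of \cite{AbbFeiNov}, Section~3.4 --- which gives~(ii) and~(iii), with $\mathbb{S}\in L^1(0,T;L^1(\Omega;\mathbb{R}^{d\times d}_{\textup{sym}}))$ the weak-$L^1$ limit of $\{\mathbb{S}_n\}$. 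The Reynolds defect is taken, as there, to be
\begin{equation*}
	\textup{d}\mathfrak{R}=\left(\overline{\mathbbm{1}_{\varrho>0}\frac{\textbf{m}\otimes\textbf{m}}{\varrho}}-\mathbbm{1}_{\varrho>0}\frac{\textbf{m}\otimes\textbf{m}}{\varrho}\right)\textup{d}x+\psi(t)\,\mathbb{I},
\end{equation*}
with $\psi\ge 0$ chosen so that $\frac1\lambda\,\textup{d}\trace[\mathfrak{R}]$ is the sum of the kinetic and pressure-potential concentration defects; since $\trace(\textbf{m}\otimes\textbf{m}/\varrho)=|\textbf{m}|^2/\varrho$ this forces a fixed constant $\lambda$ (indeed $\lambda=2$ works). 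As $\xi\mapsto\mathbbm{1}_{\varrho>0}(\textbf{m}\cdot\xi)^2/\varrho$ is convex and $\psi\ge 0$, the tensor-valued measure $\mathfrak{R}$ is positive semidefinite, so $\mathfrak{R}\in L^\infty_{\textup{weak}}(0,T;\mathcal{M}^+(\overline\Omega;\mathbb{R}^{d\times d}_{\textup{sym}}))$. Weak-in-time continuity of $\varrho$ and $\textbf{m}$ --- item~(i) --- follows from the equations and the uniform bounds by the usual weak-star equicontinuity argument, and $\varrho(0,\cdot)=\varrho_0$, $\textbf{m}(0,\cdot)=\textbf{m}_0$ follow from the convergences of the approximations.

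Finally, item~(v) is obtained by letting $n\to\infty$ in \eqref{energy inequality in n}: the map $[\varrho,\textbf{m}]\mapsto|\textbf{m}|^2/\varrho$ and the functions $P$, $F$, $F^*$ being convex and (weakly) lower semicontinuous, the limit of the left-hand side is bounded below by the energy of the limiting state plus the concentration defects, the latter being exactly $\frac1\lambda\int_{\overline\Omega}\textup{d}\trace[\mathfrak{R}(\tau)]$, while the right-hand side converges to $\int_\Omega[\frac12|\textbf{m}_0|^2/\varrho_0+a\varrho_0\log\varrho_0]\,\textup{d}x$ by the choice of $\varrho_{0,n}$. The step I expect to be the main obstacle is precisely this initial-data regularisation: one must keep $\varrho_{0,n}$ continuous and strictly positive while obtaining \emph{convergence} --- not just boundedness --- of the modified initial energies, which is delicate on $\{\varrho_0=0\}$ and relies on $\textbf{m}_0=0$ there together with the convexity and superlinearity of the energy density; everything else is a faithful transcription of the cited lemmas and of \cite{Bas}.
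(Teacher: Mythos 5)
Your proposal follows exactly the paper's route: the three-level approximation scheme of Lemmas~\ref{existence in delta epsilon n}, \ref{limit delto to zero}, \ref{limit epsilon to zero}, the passage $n\to\infty$ as in \cite{Bas}, Section~5.1 with $\gamma=1$, the identification $\textbf{m}=\varrho\textbf{u}$ via \cite{Bas}, Lemma~5.2, the density argument of \cite{AbbFeiNov}, Section~3.4 for test functions, and the same construction of $\mathfrak{R}$ as the convective-term defect plus a compensating $\psi(t)\mathbb{I}$ to absorb the energy defects. The one place you expand beyond the paper's text is the regularisation $\varrho_{0,n}$ of the initial density: the paper only asserts uniform boundedness of the modified initial energies via \eqref{initial energies independent of n}, while you correctly observe that item~(v) of Definition~\ref{dissipative solution} requires the approximate initial energies to converge (or at least satisfy $\limsup\le$) to the exact one, and you sketch the needed truncation-and-mollification argument exploiting convexity and $\textbf{m}_0=0$ on $\{\varrho_0=0\}$; this is a standard but genuine point the paper leaves implicit, and your filling it in is appropriate, not a deviation.
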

	
	\section{Existence of weak solutions} \label{Existence of weak solutions}
	
	Choosing $q> d$ in \eqref{relation F and trsce-less part}, we get the existence of weak solutions to models describing a general viscous compressible fluid \eqref{continuity equation}--\eqref{initial conditions}, or equivalently, the Reynold stress $\mathfrak{R}$ appearing in Definition \ref{dissipative solution} is identically zero. In particular, we improve the work by Matu\v{s}$\mathring{\mbox{u}}$-Ne\v{c}asov\'{a} and Novotn\'{y} \cite{MatNov}, where existence was achieved in the framework of measure-valued solutions.
	
	We can repeat the same procedure performed in the previous section until we get to Lemma \ref{limit epsilon to zero}. We can now prove the following crucial result.
	\begin{lemma}
		Let $q> d$ in \eqref{relation F and trsce-less part} and let $\{\varrho_n, \textup{\textbf{m}}_n= \varrho_n \textup{\textbf{u}}_n\}_{n\in \mathbb{N}}$ be the family of approximate solutions obtained in Lemma \ref{limit epsilon to zero}. Then, passing to a suitable subsequence as the case may be,
		\begin{equation} \label{weak convergence of convective terms in L1}
		\varrho_n \textup{\textbf{u}}_n \otimes \textup{\textbf{u}}_n \rightharpoonup \varrho \textup{\textbf{u}}\otimes \textup{\textbf{u}} \quad \mbox{in } L^1((0,T)\times \Omega; \mathbb{R}^{d\times d})
		\end{equation}
		as $n \rightarrow \infty$.
	\end{lemma}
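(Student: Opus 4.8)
The plan is to upgrade the already-established weak-$*$ convergence of $\varrho_n \textbf{u}_n \otimes \textbf{u}_n$ in $L^\infty(0,T;\mathcal{M}(\overline\Omega))$ to genuine weak convergence in $L^1((0,T)\times\Omega)$, which amounts to proving that the sequence $\{\varrho_n \textbf{u}_n \otimes \textbf{u}_n\}$ is equi-integrable (uniformly integrable) on $(0,T)\times\Omega$; by the Dunford--Pettis theorem this yields relative weak compactness in $L^1$, and the limit must coincide with the known weak-$*$ limit, which in turn equals $\varrho\textbf{u}\otimes\textbf{u}$ once one knows $\textbf{m}=\varrho\textbf{u}$ and that no concentration occurs. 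So the whole statement reduces to an equi-integrability estimate. The key input is the hypothesis $q>d$ together with the uniform bounds coming from the energy inequality \eqref{energy inequality in n}, namely $\|\textbf{u}_n\|_{L^q(0,T;W^{1,q}_0(\Omega;\mathbb{R}^d))}\le c(\overline E)$ and $\sup_t \|\varrho_n|\textbf{u}_n|^2\|_{L^1(\Omega)} + \sup_t\|P(\varrho_n)\|_{L^1(\Omega)}\le c(\overline E)$.

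\textbf{Key steps.}
First I would record the integrability gained from $q>d$: by the Sobolev embedding $W^{1,q}_0(\Omega)\hookrightarrow L^\infty(\Omega)$ (valid since $q>d$ and $\Omega$ is bounded Lipschitz), one gets $\textbf{u}_n \in L^q(0,T;L^\infty(\Omega;\mathbb{R}^d))$ with a uniform bound, and by interpolation with $L^\infty(0,T;L^2)$-type control of $\sqrt{\varrho_n}\textbf{u}_n$ one obtains a uniform bound on $\varrho_n|\textbf{u}_n|^2$ in some $L^r((0,T)\times\Omega)$ with $r>1$. Concretely, write $\varrho_n |\textbf{u}_n|^2 = (\varrho_n|\textbf{u}_n|^2)^{\theta}\,(\varrho_n|\textbf{u}_n|^2)^{1-\theta}$ and split one factor using the $L^\infty_t L^1_x$ bound on $\varrho_n|\textbf{u}_n|^2$ and the other using $|\textbf{u}_n|^2 \le \|\textbf{u}_n\|_{L^\infty_x}^2$ together with the $L^{q/2}_t$ control of $\|\textbf{u}_n\|_{L^\infty_x}^2$ and the uniform $L^\infty$ bound on $\varrho_n$ from Lemma \ref{limit epsilon to zero}; choosing the exponents appropriately produces a uniform bound in $L^r((0,T)\times\Omega;\mathbb{R}^{d\times d})$ for the convective term with some $r=r(q,d)>1$. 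Second, a uniform $L^r$ bound with $r>1$ implies equi-integrability of $\{\varrho_n\textbf{u}_n\otimes\textbf{u}_n\}$ by the de la Vallée--Poussin / Hölder argument (on any set $E$, $\int_E |\varrho_n\textbf{u}_n\otimes\textbf{u}_n| \le \|\varrho_n\textbf{u}_n\otimes\textbf{u}_n\|_{L^r}\,|E|^{1-1/r}$). Third, apply Dunford--Pettis to extract a subsequence converging weakly in $L^1((0,T)\times\Omega;\mathbb{R}^{d\times d})$; since it also converges weakly-$*$ in measures to $\overline{\mathbbm{1}_{\varrho>0}\textbf{m}\otimes\textbf{m}/\varrho}$, the two limits agree and in particular the limit is absolutely continuous with no singular concentration part. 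Finally, identify the limit with $\varrho\textbf{u}\otimes\textbf{u}$: using $\textbf{m}=\varrho\textbf{u}$ a.e., the a.e. (or measure-compactness) convergence of $\varrho_n$, the weak convergence $\textbf{u}_n\rightharpoonup\textbf{u}$ in $L^q(0,T;W^{1,q}_0)$ strengthened via Aubin--Lions-type compactness (the momentum equation \eqref{weak formulation balance of momentum n} gives a bound on $\partial_t(\varrho_n\textbf{u}_n)$ in a negative Sobolev space, hence $\varrho_n\textbf{u}_n\to\varrho\textbf{u}$ strongly in, say, $C_{\mathrm{weak}}([0,T];L^1)$ and even in $L^p(0,T;W^{-1,p}(\Omega))$), one passes to the limit in the product $\varrho_n\textbf{u}_n\otimes\textbf{u}_n = (\varrho_n\textbf{u}_n)\otimes\textbf{u}_n$ by pairing the strongly convergent factor against the weakly convergent one.

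\textbf{Main obstacle.}
The delicate point is the second factor in the product decomposition: even with $q>d$ one only has $\textbf{u}_n$ weakly convergent in $L^q(0,T;W^{1,q}_0)$, so $\varrho_n\textbf{u}_n\otimes\textbf{u}_n$ is a product of a weakly and a weakly convergent sequence, and identifying its limit requires compensated compactness. The resolution is the usual one in compressible fluid mechanics: rewrite it as $(\varrho_n\textbf{u}_n)\otimes\textbf{u}_n$, obtain strong (or $C_{\mathrm{weak}}$-in-time) convergence of the momentum $\varrho_n\textbf{u}_n$ via the time-derivative bound from the momentum balance and the compact embedding $L^1(\Omega)\hookrightarrow\hookrightarrow W^{-1,1}(\Omega)$ — exactly as used already in the $\varepsilon\to 0$ step of this paper — and then combine with the weak $L^q$ convergence of $\textbf{u}_n$. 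The genuinely new ingredient compared with the dissipative-solution construction is the $L^r$, $r>1$, estimate on the convective term, which is precisely what $q>d$ buys and which rules out the concentration defect $\mathfrak R$; I expect this equi-integrability estimate to be the crux of the argument, with the limit passage being then routine.
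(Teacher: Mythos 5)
Your overall architecture is right: the statement is really an equi-integrability claim, which upgrades the weak-$*$ measure convergence to weak $L^1$ convergence by Dunford--Pettis, and the limit is then identified as $\varrho\textbf{u}\otimes\textbf{u}$ by writing the product as $(\varrho_n\textbf{u}_n)\otimes\textbf{u}_n$ and combining the $C_{\mathrm{weak}}$--time compactness of the momentum with the weak convergence of $\textbf{u}_n$. That is exactly the paper's skeleton. However, the way you propose to obtain the equi-integrability is flawed in a way that touches the heart of the $\gamma=1$ difficulty.

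You claim a uniform bound on $\varrho_n|\textbf{u}_n|^2$ in $L^r((0,T)\times\Omega)$ for some $r>1$, by interpolating between the $L^\infty_t L^1_x$ energy bound on $\varrho_n|\textbf{u}_n|^2$, the $L^{q/2}_t L^\infty_x$ control of $|\textbf{u}_n|^2$ coming from $W^{1,q}_0\hookrightarrow L^\infty$ for $q>d$, and ``the uniform $L^\infty$ bound on $\varrho_n$ from Lemma \ref{limit epsilon to zero}.'' The last ingredient is not available uniformly in $n$: the pointwise bounds in Lemma \ref{limit epsilon to zero} are $e^{-cT}\underline\varrho\le\varrho_n\le e^{cT}\overline\varrho$ with $\underline\varrho=\min_\Omega\varrho_{0,n}$, $\overline\varrho=\max_\Omega\varrho_{0,n}$, and $\varrho_{0,n}$ only converges to $\varrho_0\in L^1(\Omega)$, so $\overline\varrho$ is allowed to blow up as $n\to\infty$. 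With the linear pressure $p(\varrho)=a\varrho$ the energy only controls $P(\varrho_n)=a\varrho_n\log\varrho_n$ in $L^\infty_t L^1_x$, i.e.\ $\varrho_n$ lives uniformly in $L^\infty_t(L\log L)_x$; this is strictly weaker than $L^\infty_t L^p_x$ for any $p>1$, so no interpolation can upgrade $\varrho_n|\textbf{u}_n|^2$ to $L^r_{t,x}$ with $r>1$. (Your argument would go through for $\gamma>1$, where $\varrho_n$ is uniformly bounded in $L^\infty_t L^\gamma_x$; the point of this paper is precisely the borderline case where this fails.)

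The paper's fix is to work with equi-integrability directly rather than trying to raise the exponent. Since $P$ is superlinear, $\{\varrho_n(t,\cdot)\}_{n}$ is equi-integrable uniformly in $t$ and $n$; combining this with the $L^\infty_t L^1_x$ bound on $\varrho_n|\textbf{u}_n|^2$ via Cauchy--Schwarz gives equi-integrability of $\{\varrho_n\textbf{u}_n(t,\cdot)\}$ uniformly in $t,n$, and the (modified, $\Delta_2$) De la Vallée--Poussin criterion then records this as an $L^\infty_t L_\Psi(\Omega)$ Orlicz bound. For any small set $(t_1,t_2)\times M$ one then estimates directly
\[
\int_{t_1}^{t_2}\int_M \varrho_n|\textbf{u}_n|^2\,\mathrm{d}x\,\mathrm{d}t
\;\le\;
\left\|\int_M |\varrho_n\textbf{u}_n|\,\mathrm{d}x\right\|_{L^{q'}(0,T)}\;
\big\|\|\textbf{u}_n\|_{L^\infty(\Omega)}\big\|_{L^q(0,T)},
\]
and the second factor is controlled by $\|\textbf{u}_n\|_{L^q(0,T;W^{1,q}_0)}$ via $q>d$ while the first is small by the uniform-in-$t$ equi-integrability of $\varrho_n\textbf{u}_n$. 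This is only a Hölder inequality in time plus the Sobolev embedding, and never requires any $L^r$, $r>1$, spatial control of the density. You should replace your interpolation step by this argument; the Dunford--Pettis extraction and limit identification you outline afterwards are then essentially the paper's.
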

	\begin{proof}
		Proceeding as in \cite{Bas}, Sections 5.1 and 5.2, we have
		\begin{align*}
			\varrho_n \rightarrow \varrho \quad &\mbox{in } C_{\textup{weak}}([0,T]; L^1(\Omega)), \\
			\varrho_n \textbf{u}_n \rightarrow \varrho\textbf{u} \quad &\mbox{in } C_{\textup{weak}}([0,T]; L^1(\Omega; \mathbb{R}^d))
		\end{align*}
		as $n \rightarrow \infty$, where the sequence $\{ \varrho_n \textbf{u}_n(t,\cdot) \}_{n \in \mathbb{N}}$ is equi-integrable in $L^1(\Omega; \mathbb{R}^d)$ for a.e. $t\in (0,T)$. Thanks to the slightly modified De la Vall\'{e}e--Poussin criterion, which we report in the Appendix, Theorem \ref{De la Valee Poussin criterion}, there exists a Young function $\Psi$ satisfies the $\Delta_2$-condition \eqref{delta 2 condition} such that 
		\begin{equation*}
			\varrho_n \textbf{u}_n \overset{*}{\rightharpoonup} \varrho \textbf{u} \quad \mbox{in } L^{\infty}(0,T; L_{\Psi}(\Omega; \mathbb{R}^d)),
		\end{equation*}
		Moreover, due to the compact Sobolev embedding
		\begin{equation*}
			L^p(\Omega)\hookrightarrow \hookrightarrow W^{-1,q'}(\Omega) \quad \mbox{for any }p\geq 1,
		\end{equation*}
		 which is true since $q>d$ from our hypothesis, we can prove that the sequence $\{ \varrho_n \textbf{u}_n \otimes \textbf{u}_n \}_{n\in \mathbb{N}}$ is equi-integrable in $L^1((0,T)\times \Omega; \mathbb{R}^{d\times d})$. Indeed, let $\varepsilon>0$ be fixed and let the constant $c>0$ be such that
		\begin{equation*}
		\| \textbf{u}_n \|_{L^q(0,T; W^{1,q}(\Omega; \mathbb{R}^d))} \leq c,
		\end{equation*}
		uniformly in $n$. Let $\widetilde{\varepsilon}=\widetilde{\varepsilon}(\varepsilon) >0$ be chosen in such a way that
		\begin{equation*}
		\widetilde{\varepsilon} < \left(c \ T^{\frac{1}{q'}}\right)^{-1} \varepsilon .
		\end{equation*}
		From the equi-integrability of the sequence $\{ \varrho_n \textbf{u}_n\}_{n\in \mathbb{N}}$, there exists $\delta=\delta(\widetilde{\varepsilon})>0$ such that
		\begin{equation*}
		\int_{M} |\varrho_n \textbf{u}_n|(t) \ \textup{d}x < \widetilde{\varepsilon}, \quad \mbox{for every } M\subset \Omega \mbox{ s.t. } |M| <\delta,
		\end{equation*}
		for every $n\in \mathbb{N}$. Let $(t_1, t_2) \times M \subset [0,T] \times \Omega$ such that
		\begin{equation*}
		|(t_1, t_2) \times M| < \delta.
		\end{equation*}
		Then, for every $n\in \mathbb{N}$,
		\begin{align*}
		\int_{t_1}^{t_2} \int_{M} |\varrho_n \textbf{u}_n \otimes \textbf{u}_n| \ \textup{d}x \textup{d}t &\leq \int_{0}^{T} \int_{M} |\varrho_n \textbf{u}_n \otimes \textbf{u}_n| \ \textup{d}x \textup{d}t \\
		&\leq \| \varrho_n \textbf{u}_n \|_{L^{q'}(0,T; L^1(M))} \| \textbf{u}_n \|_{L^q(0,T; W^{1,q}(M))} \\
		&\leq c \left[  \int_{0}^{T} \left(\int_{M} |\varrho_n \textbf{u}_n|(t) \ \textup{d}x\right)^{q'} \textup{d}t\right]^{\frac{1}{q'}} \\
		&\leq c \ \widetilde{\varepsilon} \ T^{\frac{1}{q'}} \\
		&< \varepsilon.
		\end{align*}
		Consequently, we can adapt Lemma 5.2 in \cite{Bas} replacing the sequence of densities $\{\varrho_n\}_{n\in \mathbb{N}}$ with the sequence of momenta $\{\varrho_n \textbf{u}_n\}_{n\in \mathbb{N}}$ to obtain \eqref{weak convergence of convective terms in L1}.
	\end{proof}
	
	Letting $n\rightarrow \infty$ in the weak formulation of the continuity equation \eqref{weak formulation continuity equation n} and the balance of momentum \eqref{weak formulation balance of momentum n}, we obtain the following result. 
	
	\begin{theorem} \label{existence weak solutions}
		Let $q>d$ in \eqref{relation F and trsce-less part}. For every fixed initial data 
		\begin{equation*}
		[\varrho_{0}, \textup{\textbf{m}}_0] \in L^1(\Omega) \times L^1(\Omega; \mathbb{R}^d), 
		\end{equation*}
		with 
		\begin{equation} 
			\int_{\Omega} \left[ \frac{1}{2} \frac{|\textup{\textbf{m}}_0|^2}{\varrho_0} + \varrho_{0} \log \varrho_{0}\right] \textup{d}x < \infty,
		\end{equation}
		problem \eqref{continuity equation}--\eqref{initial conditions} admits a weak solution
		\begin{equation*}
			[\varrho, \varrho\textup{\textbf{u}}] \in C_{\textup{weak}}([0,T]; L^1(\Omega)) \times  C_{\textup{weak}}([0,T]; L^1(\Omega;\mathbb{R}^d)),
		\end{equation*}
		meaning that the following holds.
		\begin{itemize}
			\item[(i)]  $\varrho \geq 0$ in $(0,T) \times \Omega$.
			\item[(i)] The integral identity
			\begin{equation*} 
			\left[ \int_{\Omega} \varrho \varphi(t,\cdot) \ \textup{d}x \right]_{t=0}^{t=\tau}= \int_{0}^{\tau} \int_{\Omega} [\varrho \partial_t \varphi + \varrho \textup{\textbf{u}}\cdot \nabla_x \varphi] \ \textup{d}x \textup{d}t
			\end{equation*}
			holds for any $\tau \in [0,T]$ and any $\varphi \in C_c^1([0,T]\times \overline{\Omega})$, with $\varrho(0,\cdot)=\varrho_0$.
			\item[(iii)] There exists
			\begin{equation*}
			\mathbb{S} \in L^1 (0,T; L^1(\Omega; \mathbb{R}^{d\times d}_{\textup{sym}}))
			\end{equation*}
			such that the integral identity
			\begin{equation*}
			\begin{aligned}
			\left[ \int_{\Omega} \varrho\textup{\textbf{u}}\cdot \bm{\varphi}(t, \cdot) \ \textup{d}x \right]_{t=0}^{t=\tau} &= \int_{0}^{\tau}\int_{\Omega} \left[ \varrho\textup{\textbf{u}} \cdot \partial_t \bm{\varphi} + \varrho \textup{\textbf{u}}\otimes \textup{\textbf{u}}:\nabla_x \bm{\varphi} +a\varrho\divv_x\bm{\varphi}\right] \ \textup{d}x \textup{d}t \\
			&- \int_{0}^{\tau} \int_{\Omega} \mathbb{S}: \nabla_x \bm{\varphi} \ \textup{d}x \textup{d}t 
			\end{aligned}
			\end{equation*}
			holds for any $\tau \in [0,T]$ and any $\bm{\varphi} \in C^1_c([0,T]\times \overline{\Omega}; \mathbb{R}^d)$, $\bm{\varphi}|_{\partial \Omega}=0$, with $(\varrho\textup{\textbf{u}})(0,\cdot)=\textup{\textbf{m}}_0$.
			\item[(iv)] the energy inequality
			\begin{equation*} 
			\begin{aligned}
			\int_{\Omega} \left[ \frac{1}{2} \frac{|\textbf{m}|^2}{\varrho} + a\varrho \log \varrho \right](\tau,\cdot) \ \textup{d}x &+ \int_{0}^{\tau} \int_{\Omega} \left[ F(\mathbb{D}\textbf{u})+ F^*(\mathbb{S})\right] \ \textup{d}x \textup{d}t \\
			&\leq \int_{\Omega} \left[ \frac{1}{2} \frac{|\textbf{m}_0|^2}{\varrho_0} + a\varrho_0 \log \varrho_0 \right]  \textup{d}x 
			\end{aligned}
			\end{equation*}
			holds for a.e. $\tau  \in (0,T)$.
		\end{itemize}
	\end{theorem}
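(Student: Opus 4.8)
The plan is to run the three-level approximation scheme of Section~\ref{Existence of dissipative solutions} without any change — artificial viscosity $\varepsilon\Delta_x\varrho$ in the continuity equation together with the compensating term $\varepsilon\nabla_x\mathbf{u}\cdot\nabla_x\varrho$ in the momentum balance, mollification $F_\delta$ of the potential, and the Faedo--Galerkin discretisation on the spaces $X_n$ — and to pass to the limits $\delta\to0$ and then $\varepsilon\to0$ exactly as before, arriving at the family $\{\varrho_n,\mathbf{m}_n=\varrho_n\mathbf{u}_n,\mathbb{S}_n\}_{n\in\mathbb{N}}$ furnished by Lemma~\ref{limit epsilon to zero}. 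This family satisfies the weak continuity equation \eqref{weak formulation continuity equation n}, the Galerkin momentum identity \eqref{weak formulation balance of momentum n}, and the energy inequality \eqref{energy inequality in n}, with the (modified) initial total energy bounded by $\overline{E}$ uniformly in $n$. From \eqref{energy inequality in n} one extracts, along a subsequence, $\varrho_n\to\varrho$ in $C_{\textup{weak}}([0,T];L^1(\Omega))$, $\mathbf{m}_n\to\mathbf{m}$ in $C_{\textup{weak}}([0,T];L^1(\Omega;\mathbb{R}^d))$, $\mathbf{u}_n\rightharpoonup\mathbf{u}$ in $L^q(0,T;W^{1,q}_0(\Omega;\mathbb{R}^d))$ (via \eqref{relation F and trsce-less part}, the trace-free Korn inequality and Poincaré), $\mathbb{S}_n\rightharpoonup\mathbb{S}$ in $L^1((0,T)\times\Omega;\mathbb{R}^{d\times d}_{\textup{sym}})$ (superlinearity of $F^*$, De la Vallée--Poussin and Dunford--Pettis), together with $\mathbf{m}=\varrho\mathbf{u}$ a.e.\ as in Lemma~5.2 of \cite{Bas}. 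The decisive additional input, available precisely because $q>d$, is the preceding lemma, which upgrades the convective term to
\[
\varrho_n\mathbf{u}_n\otimes\mathbf{u}_n\rightharpoonup\varrho\mathbf{u}\otimes\mathbf{u}\quad\text{in }L^1((0,T)\times\Omega;\mathbb{R}^{d\times d}).
\]

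With these convergences in hand I would first pass to the limit $n\to\infty$ in \eqref{weak formulation continuity equation n}: all terms are linear and the $C_{\textup{weak}}$ convergence of $\varrho_n$ and $\mathbf{m}_n$ suffices, yielding part~(ii) with $\varrho(0,\cdot)=\varrho_0$. In the Galerkin momentum identity \eqref{weak formulation balance of momentum n}, fixing $m$ and a test function $\bm{\varphi}\in C^1([0,T];X_m)$ (admissible for every $n\ge m$ since $X_m\subset X_n$), the terms $\int\mathbf{m}_n\cdot\partial_t\bm{\varphi}$ and $\int\mathbb{S}_n:\nabla_x\bm{\varphi}$ pass to the limit by the weak convergences above, the pressure term $a\int\varrho_n\divv_x\bm{\varphi}$ by $C_{\textup{weak}}$ convergence, the boundary term $\int\mathbf{m}_n\cdot\bm{\varphi}(t,\cdot)$ by weak continuity in time, and — crucially — $\int\varrho_n\mathbf{u}_n\otimes\mathbf{u}_n:\nabla_x\bm{\varphi}\to\int\varrho\mathbf{u}\otimes\mathbf{u}:\nabla_x\bm{\varphi}$ \emph{with no defect measure}, thanks to the displayed $L^1$ convergence. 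Exactly as in \cite{AbbFeiNov}, Section~3.4, a density argument then extends the limiting identity from $\bigcup_m C^1([0,T];X_m)$ to all $\bm{\varphi}\in C^1_c([0,T]\times\overline{\Omega};\mathbb{R}^d)$ with $\bm{\varphi}|_{\partial\Omega}=0$, which is part~(iii); the regularity $\mathbf{u}\in L^q(0,T;W^{1,q}_0)$ with $\mathbf{m}=\varrho\mathbf{u}$ has already been recorded above.

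It remains to pass to the limit in the energy inequality \eqref{energy inequality in n} (the nonnegative term $\varepsilon\int P''(\varrho)|\nabla_x\varrho|^2$ has already been discarded in Lemma~\ref{limit epsilon to zero}). At a fixed time $\tau$ one uses sequential weak-$L^1$ lower semicontinuity: the functional $[\varrho,\mathbf{m}]\mapsto\tfrac12|\mathbf{m}|^2/\varrho$ (extended by $0$ when $\mathbf{m}=0$ and by $+\infty$ otherwise) is jointly convex and lower semicontinuous, the pressure potential $P(\varrho)=a\varrho\log\varrho$ is convex and bounded below by $-a/e$, and $F,F^*$ are convex and nonnegative, so convexity plus boundedness below gives, via Mazur's lemma and Fatou,
\[
\int_{\Omega}\Big[\tfrac12\tfrac{|\mathbf{m}|^2}{\varrho}+P(\varrho)\Big](\tau,\cdot)\,\mathrm{d}x
+\int_0^\tau\!\!\int_{\Omega}[F(\mathbb{D}_x\mathbf{u})+F^*(\mathbb{S})]\,\mathrm{d}x\,\mathrm{d}t
\le\liminf_{n\to\infty}(\cdots)_n
\le\int_{\Omega}\Big[\tfrac12\tfrac{|\mathbf{m}_0|^2}{\varrho_{0,n}}+P(\varrho_{0,n})\Big]\mathrm{d}x,
\]
using $\varrho_n(\tau,\cdot)\rightharpoonup\varrho(\tau,\cdot)$, $\mathbf{m}_n(\tau,\cdot)\rightharpoonup\mathbf{m}(\tau,\cdot)$ in $L^1$ and $\mathbf{u}_n\rightharpoonup\mathbf{u}$, $\mathbb{S}_n\rightharpoonup\mathbb{S}$ in $L^1$; choosing $\varrho_{0,n}\to\varrho_0$ in $L^1$ so that the approximate initial energies converge to the exact one gives part~(iv) for a.e.\ $\tau\in(0,T)$.

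I do not expect any genuinely hard point in the theorem itself: the load-bearing step — the $L^1$-equi-integrability of $\{\varrho_n\mathbf{u}_n\otimes\mathbf{u}_n\}$, hence, via Dunford--Pettis, the identification of its weak $L^1$ limit with $\varrho\mathbf{u}\otimes\mathbf{u}$ — is already resolved in the preceding lemma, and it is exactly there that the hypothesis $q>d$ is used, through the compact embedding $L^p(\Omega)\hookrightarrow\hookrightarrow W^{-1,q'}(\Omega)$ combined with the modified De la Vallée--Poussin criterion of Appendix~\ref{De la Vallee-Poussin criterion}. Without $q>d$ this equi-integrability fails, the convective term only converges weakly-$*$ as a measure, and a nontrivial Reynolds stress $\mathfrak{R}$ survives, which is precisely the situation of Theorem~\ref{existence dissipative solutions}; everything else in the present argument (limits of linear terms, the density argument for test functions, convex lower semicontinuity in the energy balance) is routine.
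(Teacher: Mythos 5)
Your proposal is correct and follows essentially the same route as the paper: run the approximation scheme down to Lemma~\ref{limit epsilon to zero}, invoke the preceding lemma (which uses $q>d$, the compact embedding, and the modified De la Vall\'{e}e--Poussin criterion) to get $\varrho_n\mathbf{u}_n\otimes\mathbf{u}_n\rightharpoonup\varrho\mathbf{u}\otimes\mathbf{u}$ in $L^1$ with no defect, pass $n\to\infty$ in the continuity and Galerkin momentum identities, extend test functions by density as in \cite{AbbFeiNov}, and close the energy inequality by convexity and weak $L^1$ lower semicontinuity. No gaps.
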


	\appendix
	
	\section{De la Vall\'{e}e--Poussin criterion} \label{De la Vallee-Poussin criterion}
	
	In this section, we prove a slightly modified version of the De la Vall\'{e}e--Poussin criterion as we require the stronger condition, with respect to the standard formulation, that the Young function satisfies the $\Delta_2$-condition. We first recall the definitions of Young function and $\Delta_2$-condition.
	\begin{definition}
		\begin{itemize}
			\item[(i)] We say that $\Phi$ is a \textit{Young function} generated by $\varphi$ if
			\begin{equation*}
			\Phi(t) = \int_{0}^{t} \varphi (s) \ \textup{d}s \quad \mbox{for any } t\geq 0,
			\end{equation*}
			where the real-valued function $\varphi$ defined on $[0,\infty)$ is non-negative, non-decreasing, left-continuous and such that 
			\begin{equation*}
			\varphi(0)=0, \quad \lim_{s\rightarrow \infty}\varphi(s)=\infty.
			\end{equation*}
			\item[(ii)] A Young function $\Phi$ is said to satisfy the $\Delta_2$-\textit{condition} if there exist a positive constant $K$ and $t_0 \leq 0$ such that
			\begin{equation} \label{delta 2 condition}
			\Phi(2t) \leq K \Phi (t) \quad \mbox{for any } t\geq t_0.
			\end{equation}
		\end{itemize}
	\end{definition}
	
	\begin{theorem} \label{De la Valee Poussin criterion}
		Let $Q \subset \mathbb{R}^d$ be a bounded measurable set and let $\{ f_n \}_{n\in \mathbb{N}}$ be a sequence in $L^1(Q)$. Then, the following statements are equivalent. 
		\begin{itemize}
			\item[(i)] The sequence $\{ f_n \}_{n\in \mathbb{N}}$ is equi-integrable, meaning that for any $\varepsilon>0$ there exists $\delta=\delta(\varepsilon)>0$ such that
			\begin{equation*}
			\int_{M} |f_n (y)| \ \textup{d}y < \varepsilon \quad \mbox{for any } M\subset Q \mbox{ such that } |M| < \delta,
			\end{equation*}
			independently of $n$.
			\item[(ii)] There exists a Young function $\Phi$ satisfying the $\Delta_2$-condition \eqref{delta 2 condition} such that the sequence $\{ f_n \}_{n\in \mathbb{N}}$ is uniformly bounded in the Orlicz space $L_{\Phi}(Q)$.
		\end{itemize}
	\end{theorem}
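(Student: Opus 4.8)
The plan is to prove the two implications separately; only (i) $\Rightarrow$ (ii) is non-routine, since it requires building a Young function that, beyond the classical requirements, also obeys the $\Delta_2$-condition. For \textbf{(ii) $\Rightarrow$ (i)} this condition is not needed and I would argue exactly as in the classical criterion: since $\Phi$ is generated by some $\varphi$ with $\varphi(s)\to\infty$, the ratio $t\mapsto\Phi(t)/t$ is non-decreasing and diverges, so $t\le\frac{R}{\Phi(R)}\Phi(t)$ whenever $t\ge R$; writing $C:=\sup_n\int_Q\Phi(|f_n|)\,\textup{d}y$, which is finite after replacing $\Phi$ by $\Phi(\cdot/\lambda)$ for a suitable $\lambda>0$ by the definition of the Luxemburg norm, I would split, for $M\subset Q$ measurable and $R>0$,
\[
\int_M|f_n|\,\textup{d}y=\int_{M\cap\{|f_n|\le R\}}|f_n|\,\textup{d}y+\int_{M\cap\{|f_n|>R\}}|f_n|\,\textup{d}y\le R\,|M|+\frac{R}{\Phi(R)}\,C,
\]
and, given $\varepsilon>0$, first pick $R$ with $\frac{R}{\Phi(R)}C<\varepsilon/2$ and then set $\delta:=\varepsilon/(2R)$.

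For \textbf{(i) $\Rightarrow$ (ii)} I would first upgrade equi-integrability to uniform integrability: a finite partition of $Q$ into sets of small measure forces $\sup_n\|f_n\|_{L^1(Q)}=:C_0<\infty$, so by Chebyshev $|\{|f_n|>t\}|\le C_0/t$, and hence $\omega(t):=\sup_n\int_{\{|f_n|>t\}}|f_n|\,\textup{d}y$ is non-increasing with $\omega(t)\to0$ as $t\to\infty$. Using that $\omega$ is eventually as small as we please, I would then choose inductively an increasing sequence $t_k\uparrow\infty$ with
\[
\omega(t_k)\le 2^{-k}\qquad\text{and}\qquad t_{k+1}\ge 2\,t_k\qquad(k\ge 1),
\]
each $t_{k+1}$ being taken large enough to meet both constraints simultaneously — this is the crucial point, since the classical construction only needs $t_k\to\infty$, whereas the lacunary growth $t_{k+1}\ge 2t_k$ is precisely what will produce $\Delta_2$. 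With the thresholds fixed I would set
\[
\varphi(s):=\#\{\,k\ge 1:\ t_k<s\,\},\qquad \Phi(t):=\int_0^t\varphi(s)\,\textup{d}s,
\]
so that $\varphi$ is non-negative, non-decreasing, left-continuous, vanishes at $0$ and tends to $\infty$ — hence $\Phi$ is a Young function — and $\Phi(t)/t\ge\frac12\varphi(t/2)\to\infty$, so $\Phi$ is superlinear.

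It then remains to check the two properties of $\Phi$. For the $\Delta_2$-condition: if $\varphi(s)=k\ge 1$, i.e. $s\in(t_k,t_{k+1}]$, then $2s\le 2t_{k+1}\le t_{k+2}$, so $\varphi(2s)\le k+1\le 2\varphi(s)$; thus $\varphi(2s)\le 2\varphi(s)$ for every $s>t_1$, and the substitution $s=2u$ in $\int_t^{2t}\varphi$ gives, for $t\ge 2t_1$,
\[
\Phi(2t)=\Phi(t)+2\int_{t/2}^{t}\varphi(2u)\,\textup{d}u\le\Phi(t)+4\int_{t/2}^{t}\varphi(u)\,\textup{d}u\le 5\,\Phi(t),
\]
that is, \eqref{delta 2 condition} with $K=5$ and $t_0=2t_1$. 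For the uniform bound in $L_\Phi(Q)$: on $\{t_k<|f_n|\le t_{k+1}\}$ one has $\varphi\le k$ on $[0,|f_n|]$, so $\Phi(|f_n|)\le k\,|f_n|$, while $\Phi(|f_n|)=0$ on $\{|f_n|\le t_1\}$, whence
\[
\int_Q\Phi(|f_n|)\,\textup{d}y\le\sum_{k\ge 1}k\int_{\{|f_n|>t_k\}}|f_n|\,\textup{d}y\le\sum_{k\ge 1}k\,\omega(t_k)\le\sum_{k\ge 1}k\,2^{-k}=2
\]
uniformly in $n$, and convexity of $\Phi$ then yields $\sup_n\|f_n\|_{L_\Phi(Q)}\le 2$. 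The step I expect to require the most care is exactly the simultaneous inductive choice of the $t_k$ — balancing the summable decay $\omega(t_k)\le 2^{-k}$, needed to control $\int_Q\Phi(|f_n|)$, against the geometric spacing $t_{k+1}\ge 2t_k$, needed for $\Delta_2$; everything else reduces to elementary estimates for the step function $\varphi$ and its primitive.
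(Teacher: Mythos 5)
Your proof is correct and follows essentially the same construction as the paper: choose thresholds $t_k$ (the paper's $C_m$) so that the tail integrals $\omega(t_k)$ decay summably, build the generator $\varphi$ as the associated integer-valued step function, and impose sparse (geometric) growth on the thresholds to secure $\Delta_2$. Your version is actually a bit tighter on the last point — you make the requirement $t_{k+1}\ge 2t_k$ explicit and restrict the doubling estimate to $s>t_1$ where $\varphi\ge 1$, whereas the paper's claim that $\alpha_{2m}\le c\,\alpha_m$ for \emph{all} $m\ge 1$ is stated a little loosely (it fails whenever $\alpha_m=0<\alpha_{2m}$) and needs exactly the restriction you supply.
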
 
	\begin{proof}
		(ii) $\Rightarrow$ (i) See Pedregal  \cite{Ped}, Chapter 6, Lemma 6.4. 
		
		(i) $\Rightarrow$ (ii) For $n\in \mathbb{N}$ and $j\geq 1$ fixed, let
		\begin{equation*}
		\mu_j(f_n):= |\{ y\in Q: \ |f_n(y)| >j \}|.
		\end{equation*}
		As the sequence $\{ f_n \}_{n\in \mathbb{N}}$ is equi-integrable, from the Dunford-Pettis theorem there exists a strictly increasing sequence of positive integers $\{ C_m \}_{m\in \mathbb{N}}$ such that for each $m$
		\begin{equation*}
		\sup_{n\in \mathbb{N}} \int_{\{ |f_n|>C_m\}} |f_n(y)| \ \textup{d}y \leq \frac{1}{2^m}.
		\end{equation*}
		For $n\in \mathbb{N}$ and $m\geq 1$ fixed
		\begin{equation*}
		\int_{\{ |f_n|>C_m\}} |f_n(y)| \ \textup{d}y = \sum_{j=C_m}^{\infty} \int_{\{ j<|f_n|\leq j+1\}} |f_n(y)| \ \textup{d}y \geq \sum_{j=C_m}^{\infty} j \ [\mu_j(f_n)-\mu_{j+1}(f_n)] \geq \sum_{j=C_m}^{\infty}\mu_j(f_n).
		\end{equation*}
		In particular, we obtain
		\begin{equation*}
		\sum_{m=1}^{\infty} \sum_{j=C_m}^{\infty}\mu_j(f_n) \leq \sum_{m=1}^{\infty} \int_{\{ |f_n|>C_m\}} |f_n(y)| \ \textup{d}y \leq \sum_{m=1}^{\infty}  \frac{1}{2^m} =1.
		\end{equation*}
		
		For $m\geq 0$, we define
		\begin{equation*}
		\alpha_m= \begin{cases}
		0 &\mbox{if } m<C_1, \\
		\max \{k: \ C_k \leq m\} &\mbox{if } m\geq C_1.
		\end{cases}
		\end{equation*}
		Notice that
		\begin{equation} \label{relation alpha and C}
		\alpha_m \geq j \quad \Leftrightarrow \quad C_j \leq m.
		\end{equation}
		It is straightforward that $\alpha_m \rightarrow \infty$ as $m\rightarrow \infty$. We define a step function $\varphi$ on $[0,\infty)$ by
		\begin{equation*}
		\varphi(s) = \sum_{m=0}^{\infty} \alpha_m \chi_{(m,m+1]}(s) \quad \mbox{for any }0\leq s<\infty.
		\end{equation*}
		It is clear that $\varphi$ is non-negative, non-decreasing, left-continuous and such that $\varphi(0)=0$, $\lim_{s\rightarrow \infty} \varphi(s)= \infty$. Then, we can define the Young function $\Phi$ generated by $\varphi$ as
		\begin{equation*}
		\Phi(t)= \int_{0}^{t} \varphi(s)\ \textup{d}s, \quad \mbox{for any }0\leq t< \infty.
		\end{equation*}
		At this point, notice that we have the freedom to take the constants $C_j$, $j\geq 1$, as large as we want and consequently, the constants $\alpha_m$, $m\geq 1$, will be as small as we want. More precisely, we may find a positive constant $c$ such that
		\begin{equation*}
		\alpha_{2m} \leq c \ \alpha_m \quad \mbox{for any } m\geq 1.
		\end{equation*}
		We then obtain, for all $s\in [0,\infty)$,
		\begin{equation*}
		\varphi(2s) = \sum_{m=0}^{\infty} \alpha_m \chi_{\left(\frac{m}{2}, \frac{m+1}{2}\right)}(s) = \sum_{k=0}^{\infty} \alpha_{2k} \chi_{\left(k, k+ \frac{1}{2}\right)}(s) \leq c \sum_{k=0}^{\infty} \alpha_k \chi_{\left(k, k+ \frac{1}{2}\right)}(s) \leq c\ \varphi(s);
		\end{equation*}
		consequently, for all $t\in [0,\infty)$,
		\begin{equation*}
		\Phi(2t) = \int_{0}^{2t} \varphi(s) \ \textup{d}s = 2\int_{0}^{t} \varphi(2z) \ \textup{d}z \leq 2c \int_{0}^{t} \varphi(z) \ \textup{d}z = 2c \ \Phi(t),
		\end{equation*}
		and thus we get that the Young function $\Phi$ satisfies the $\Delta_2$-condition \eqref{delta 2 condition}. 
		
		Finally, for $n\in \mathbb{N}$ fixed, using the fact that $\Phi(0)=\Phi(1)=0$ and for $j\geq 1$, noticing that $\alpha_0=0$,
		\begin{equation*}
		\Phi(j+1) = \int_{0}^{j+1} \varphi(s) \ \textup{d} s = \sum_{m=0}^{j} \int_{m}^{m+1} \varphi(s) \ \textup{d} s \leq \sum_{m=0}^{j} \varphi(m+1) = \sum_{m=0}^{j} \alpha_m= \sum_{m=1}^{j} \alpha_m,
		\end{equation*}
		we get 
		\begin{align*}
		\int_{Q} \Phi(|f_n(y)|) \ \textup{d} y  &= \int_{\{ |f_n|=0 \}} \Phi (|f_n(y)|) \ \textup{d} y + \sum_{j=0}^{\infty} \int_{\{ j<|f_n|\leq j+1\}} \Phi(|f_n(y)|) \ \textup{d}y \\
		&\leq \sum_{j=1}^{\infty} [\mu_j(f_n)-\mu_{j+1}(f_n)] \ \Phi(j+1) \\
		&\leq \sum_{j=1}^{\infty} [\mu_j(f_n)-\mu_{j+1}(f_n)]  \sum_{m=1}^{j} \alpha_m \\
		&= \sum_{m=1}^{\infty} \alpha_m \sum_{j=m}^{\infty} [\mu_j(f_n)-\mu_{j+1}(f_n)] \\
		&=  \sum_{m=1}^{\infty} \alpha_m \mu_m(f_n)= \sum_{m=1}^{\infty} \mu_m(f_n) \sum_{j=1}^{\alpha_m} 1 = \sum_{j=1}^{\infty} \sum_{m=C_j}^{\infty} \mu_m(f_n) \leq 1
		\end{align*}
		where we used \eqref{relation alpha and C} in the last line. In particular, we obtain that the sequence $\{ f_n \}_{n\in \mathbb{N}}$ is uniformly bounded in the Orlicz space $L_{\Phi}(Q)$.
	\end{proof}

	\bigskip
	
	\centerline{\bf Acknowledgement}
	
	This work was supported by the Einstein Foundation, Berlin. The author wishes to thank her advisor Prof. Eduard Feireisl for the helpful advice and discussions.


\begin{thebibliography}{}
		
		\bibitem{AbbFeiNov}
		A. Abbatiello, E. Feireisl and A. Novotn\'{y},
		\textit{Generalized solutions to mathematical models of compressible viscous fluids}, Discrete \& Continuous Dynamical Systems \textbf{41}(1): 1--28; 2021
		
		\bibitem{Bas}
		D. Basari\'{c},
		\textit{Semiflow selection to models of general compressible viscous fluids}, J. Math. Fluid Mech. \textbf{23}(2); 2021 
		
		\bibitem{BleMalRaj}
		J. Blechta, J. M\'{a}lek and J. R. Rajagopal,
		\textit{On the classification of incompressible fluids and a mathematical analysis of the equations that govern their motion}, 	arXiv:1902.04853; 2019
		
		\bibitem{BreCiaDie}
		D. Breit, A. Cianchi and L. Diening,
		\textit{Trace--free Korn inequality in Orlicz spaces}, SIAM J. Math. Anal. \textbf{49}(4): 2496--2526; 2017
		
		\bibitem{ChaJinNov}
		T. Chang, B. J. Jin and A. Novotn\'{y}, \textit{Compressible Navier-Stokes system with inflow-outflow boundary data}, SIAM J. Math. Anal. \textbf{51}(2): 1238--1278; 2019
		
		\bibitem{CriDonSpi}
		G. Crippa, C. Donadello and L. V. Spinolo, 
		\textit{A note on the initial-boundary value problem for continuity equations with rough coefficients}, HYP 2012 conference proceedings, AIMS Series in Appl. Math. \textbf{8}: 957--966; 2014
		
		\bibitem{Fei}
		E. Feireisl, \textit{Dynamics of viscous compressible fluids}, Oxford University Press, Oxford; 2003
		
		\bibitem{FeiLiaMal}
		E. Feireisl, X. Liao and J. M\'{a}lek,
		\textit{Global weak solutions to a class of non-Newtonian compressible fluids}, Math. Meth. Appl. Sci. \textbf{38}(16): 3482--3494; 2015
		
		\bibitem{Gir}
		V. Girinon,
		\textit{Navier-Stokes equations with nonhomogeneous boundary conditions in a bounded three-dimensional domain}, J. Math. Fluid Mech. \textbf{13}: 309--339; 2011
		
		\bibitem{Lio}
		P. L. Lions,
		\textit{Mathematical topics in fluid mechanics, Volume 2: compressible models}, Oxford Science Publications, Oxford; 1998
		
		\bibitem{Mam}
		A. E. Mamontov,
		\textit{Global solvability of the multidimensional Navier-Stokes equations of a compressible fluid with nonlinear viscosity. I}, Sib. Math. J. \textbf{40}: 351--362; 1999
		
		\bibitem{Mam1}
		A. E. Mamontov,
		\textit{Global solvability of the multidimensional Navier-Stokes equations of a compressible fluid with nonlinear viscosity. II}, Sib. Math. J. \textbf{40}: 541--555; 1999
		
		\bibitem{MatNov}
		\v{S}. Matu\v{s}\accent23u-Ne\v{c}asov\'{a} and A. Novotn\'{y},
		\textit{Measure--valued solution for non--Newtonian compressible isothermal monopolar fluid}, Acta Applicandae Mathematica \textbf{37}: 109--128; 1994
		
		\bibitem{Ped}
		P. Pedregal,
		\textit{Parametrized measures and variational principles}, Birkha\"{u}ser, Basel; 1997
		
	\end{thebibliography}
\end{document}